\numberwithin{equation}{section}
\definecolor{ForestGreen}{rgb}{0.1,0.6,0.05}
\definecolor{EgyptBlue}{rgb}{0.063,0.1,0.6}
\def\wolp{\accentset{\circ}{W}_p^{1}}
\newtheorem{thm}{Theorem}[section]
\newtheorem{lemma}[thm]{Lemma}
\newtheorem{proposition}[thm]{Proposition}
\newtheorem{cor}[thm]{Corollary}
\theoremstyle{definition}
\newtheorem{remark}[thm]{Remark}
\title{On qualitative properties of solutions for elliptic problems with the $p$-Laplacian through domain perturbations
	\\ \medskip}
\author{Vladimir Bobkov\thanks{E-mail: \texttt{bobkov@kma.zcu.cz}}~}
\affil{{\small Department of Mathematics and NTIS, Faculty of Applied Sciences, University of West Bohemia, Univerzitn\'i 8, 301 00 Plze\v{n}, Czech Republic. Institute of Mathematics, Ufa Federal Research Centre, RAS, Chernyshevsky str.~112, 450008 Ufa, Russia}}
\author{Sergey Kolonitskii\thanks{E-mail: \texttt{sergey.kolonitskii@gmail.com}}}
\affil{{\small Saint Petersburg Electrotechnical University "LETI", 5 Professora Popova st., St.~Petersburg, 197376 Russia}}
\date{}
\begin{document}
\maketitle

\begin{abstract}
	We study the dependence of least nontrivial critical levels of the energy functional corresponding to the zero Dirichlet problem $-\Delta_p u = f(u)$ in a bounded domain $\Omega \subset \mathbb{R}^N$ upon domain perturbations. 
	Assuming that the nonlinearity $f$ is superlinear and subcritical, we establish Hadamard-type formulas for such critical levels.
	As an application, we show that among all (generally eccentric) spherical annuli $\Omega$ least nontrivial critical levels attain maximum if and only if $\Omega$ is concentric. 
	As a consequence of this fact, we prove the nonradiality of least energy nodal solutions whenever $\Omega$ is a ball or concentric annulus. 

	\par
	\smallskip
	\noindent {\bf  Keywords}: 
	$p$-Laplacian; superlinear nonlinearity; domain derivative; shape optimization; Hadamard formula; Nehari manifold; least energy solution; nodal solution; nonradiality.
	
	\noindent {\bf MSC2010}: 
	35J92,	
	35B06,	
	49Q10,	
	35B30,	
	49K30,	
	35B51.	
\end{abstract}

\section{Introduction and main results}\label{sec:intro}

Let $\Omega$ be a bounded domain in $\mathbb{R}^N$, $N \geqslant 2$, with the boundary $\partial \Omega$ of class $C^{2,\varsigma}$, $\varsigma \in (0,1)$.
Consider the boundary value problem
\begin{equation}\label{D}
	\tag{$\mathcal{D}$}
	\left\{
	\begin{aligned}
		-\Delta_p u &= f(u) 
		&&\text{in } \Omega, \\
		u &= 0  &&\text{on } \partial \Omega,
	\end{aligned}
	\right.
\end{equation}	
where $\Delta_p u := \text{div}(|\nabla u|^{p-2} \nabla u)$ is the $p$-Laplacian, $p>1$. 
Denote $p^* = \frac{Np}{N-p}$ if $p < N$ and $p^* = +\infty$ if $p \geqslant N$.
We will always impose the following assumptions on the nonlinearity $f: \mathbb{R} \to \mathbb{R}$:

\begin{itemize}\addtolength{\itemindent}{1em}
	\item[$(A_1)$] $f \in C^1(\mathbb{R} \setminus \{0\}) \cap C^{0,\gamma}_{\text{loc}}(\mathbb{R})$ for some $\gamma \in (0,1)$.
	\item[$(A_2)$] There exist $q \in (p, p^*)$ and $C>0$ such that $|s f'(s)|, |f(s)| \leqslant C (|s|^{q-1}+1)$ for all $s \in \mathbb{R}\setminus\{0\}$.\footnote{If $p \geqslant N$, this assumption can be relaxed, see \cite[condition (\textbf{F4}) and Lemma~5.6]{Nazar}.}
	\item[$(A_3)$] $f'(s) > (p-1)\dfrac{f(s)}{s} > 0$ for all $s \in \mathbb{R}\setminus \{0\}$, and $\limsup\limits_{s \to 0} \dfrac{f(s)}{|s|^{p-2}s} < \lambda_p(\Omega)$, where
	\begin{equation}\label{mu:lambda}
		\lambda_p(\Omega) := \min_{u \in \wolp(\Omega) \setminus \{0\}} \frac{\int_\Omega |\nabla u|^p \, dx}{\int_\Omega |u|^p \, dx}
	\end{equation}
	is the first eigenvalue of the zero Dirichlet $p$-Laplacian in $\Omega$.
	\item[$(A_4)$] There exist $s_0 > 0$ and $\theta > p$ such that $0 < \theta F(s) \leqslant s f(s)$ for all $|s| > s_0$, where 
	$$
	F(s) := \int_0^s f(t) \, dt.
	$$
\end{itemize}

Problem \eqref{D} corresponds to the energy functional $E: \wolp(\Omega) \to \mathbb{R}$ defined as
$$
E[u] = \frac{1}{p} \int_\Omega |\nabla u|^p \, dx - \int_\Omega F(u) \, dx.
$$
The functional $E$ is weakly lower semicontinuous and belongs to $C^1(\wolp(\Omega))$. 
By definition, a weak solution of \eqref{D} is a critical point of $E$. 
Moreover, any weak solution of \eqref{D} is $C^{1,\beta}(\overline{\Omega})$-smooth, $\beta \in (0,1)$, and any constant-sign weak solution satisfies the Hopf maximum principle\footnote{see Remarks~\ref{rem:regularity} and \ref{rem:maximum} in Appendix~\ref{section:appendix}.}. 

If for some $c \in \mathbb{R}$ there exists a nontrivial critical point $u$ of $E$ such that $E[u] = c$, then $c$ is called a nontrivial critical level of~$E$. 
We are interested in \textit{least} nontrivial critical levels $\mu_+(\Omega)$ and $\mu_-(\Omega)$ among positive and negative solutions of \eqref{D}, respectively. 
In Appendix~\ref{section:appendix} below we discuss that under assumptions $(A_1)-(A_4)$, $\mu_+(\Omega)$ and $\mu_-(\Omega)$ can be defined as
\begin{equation}\label{mu:Nehari}
\mu_+(\Omega) = \min_{v \in \mathcal{N}(\Omega), \, v \geqslant 0} E[v] 
\quad \text{and} \quad 
\mu_-(\Omega) = \min_{v \in \mathcal{N}(\Omega), \, v \leqslant 0} E[v],
\end{equation}
where
$$
\mathcal{N}(\Omega) := \{u \in \wolp(\Omega) \setminus \{0\}:\, E'[u]u \equiv \int_\Omega |\nabla u|^p \, dx - \int_\Omega u \, f(u) \, dx = 0\}
$$
is the Nehari manifold. 
Minimizers of \eqref{mu:Nehari} exist and they are least energy constant-sign solutions of \eqref{D}. Moreover, $\mu_\pm(\Omega) > 0$.

The first goal of the present article is to study the behavior of $\mu_\pm(\Omega)$ under smooth domain perturbations $\Omega_t := \Phi_t(\Omega)$ driven by a family of diffeomorphisms
\begin{equation}\label{Phi}
\Phi_t(x) = x + t R(x), 
\quad 
R \in C^1(\mathbb{R}^N, \mathbb{R}^N), 
\quad 
|t|<\delta,
\end{equation}
where $\delta > 0$ is small enough.
Let us take an arbitrary minimizer $v_0$ of $\mu_+(\Omega)$ and consider a function $v_t(y) := v_0(\Phi_t^{-1}(y))$, $y \in \Omega_t$. It is not hard to see that $v_t \in \wolp(\Omega_t)$ and $v_t \geqslant 0$ on $\Omega_t$.
By Lemma~\ref{lem:superlin} and Remark~\ref{rem:Omega_t} from Appendix~\ref{section:appendix}, for each $|t| < \delta$ we can find a unique constant $\alpha(v_t) > 0$ such that $\alpha(v_t) v_t \in \mathcal{N}(\Omega_t)$. 
Consequently, $\mu_+(\Omega_t) \leqslant E[\alpha(v_t) v_t]$. (We always assume by default that domains of integration in $E[\alpha(v_t) v_t]$ are $\Omega_t$.) Analogous facts remain valid if we take any minimizer $w_0$ of $\mu_-(\Omega)$ and consider $w_t(y) := w_0(\Phi_t^{-1}(y))$, $y \in \Omega_t$.

We prove the following result.
\begin{thm}\label{thm:1}
	Assume that $(A_1)-(A_4)$ are satisfied. 
	Then $\mu_+(\Omega_t)$ and $\mu_-(\Omega_t)$ are continuous at $t=0$. Moreover, 
	$E[\alpha(v_t) v_t]$ and $E[\alpha(w_t) w_t]$ are differentiable with respect to $t \in (-\delta, \delta)$ and the following Hadamard-type formulas hold:
	\begin{align}
	\notag
	\left.\frac{\partial E[\alpha(v_t) v_t]}{\partial t}\right|_{t=0} &= - \frac {p-1} p \int_{\partial \Omega} \left| \frac{\partial v_0}{\partial n} \right|^p \left<R, n\right> \,d\sigma, \\
	\label{mu:Neh2}
	\left.\frac{\partial E[\alpha(w_t) w_t]}{\partial t}\right|_{t=0} &= - \frac {p-1} p \int_{\partial \Omega} \left| \frac{\partial w_0}{\partial n} \right|^p \left<R, n\right> \,d\sigma,
	\end{align}
	where $n$ is the outward unit normal vector to $\partial \Omega$ and $\left<\cdot,\cdot\right>$ stands for the scalar product in $\mathbb{R}^N$.
\end{thm}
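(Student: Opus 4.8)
The plan is to transport the whole problem back to the fixed reference domain $\Omega$ and then to exploit the variational (Nehari) characterisation of $\alpha(v_t)$ to discard the dependence of $\alpha$ on $t$. I treat $\mu_+$ only; the computation for $\mu_-$ is verbatim with $w_0$ in place of $v_0$. Writing $y=\Phi_t(x)$ and using $v_t\circ\Phi_t=v_0$, the chain rule gives $\nabla_y v_t=(D\Phi_t)^{-T}\nabla_x v_0$, so with $J_t:=\det D\Phi_t$ the change of variables turns the energy into an integral over the $t$-independent domain $\Omega$,
\[
E[\alpha v_t]=h(t,\alpha):=\frac{\alpha^p}{p}\int_\Omega |(D\Phi_t)^{-T}\nabla v_0|^p\,J_t\,dx-\int_\Omega F(\alpha v_0)\,J_t\,dx.
\]
The Nehari condition $\alpha v_t\in\mathcal N(\Omega_t)$ is precisely the equation $\partial_\alpha h(t,\alpha)=0$; by Lemma~\ref{lem:superlin} and Remark~\ref{rem:Omega_t} this root $\alpha(v_t)$ is unique and positive, and the strict monotonicity coming from $(A_3)$ yields $\partial^2_\alpha h\neq 0$ there, so the implicit function theorem makes $t\mapsto\alpha(v_t)$ differentiable.

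Since $v_0\in\mathcal N(\Omega)$ we have $\alpha(v_0)=1$. Differentiating $t\mapsto h(t,\alpha(v_t))$ and using $\partial_\alpha h(t,\alpha(v_t))=0$, the chain-rule term carrying $\alpha'(t)$ drops out (the envelope/Nehari trick), leaving
\[
\left.\frac{\partial E[\alpha(v_t)v_t]}{\partial t}\right|_{t=0}=\partial_t h(0,1).
\]
Differentiation under the integral sign is legitimate by dominated convergence, since $v_0\in C^{1,\beta}(\overline\Omega)$ (Remark~\ref{rem:regularity}) and $R\in C^1$. Inserting $J_t=1+t\operatorname{div}R+O(t^2)$ and $(D\Phi_t)^{-T}=I-t(DR)^{T}+O(t^2)$, whence $|(D\Phi_t)^{-T}\xi|^p=|\xi|^p-tp|\xi|^{p-2}\langle\xi,DR\,\xi\rangle+O(t^2)$, gives the volume formula
\[
\partial_t h(0,1)=\frac1p\int_\Omega|\nabla v_0|^p\operatorname{div}R\,dx-\int_\Omega|\nabla v_0|^{p-2}\langle\nabla v_0,DR\,\nabla v_0\rangle\,dx-\int_\Omega F(v_0)\operatorname{div}R\,dx.
\]

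It remains to convert this into a boundary integral via a Pohozaev--Rellich manipulation. Because $F(0)=0$ and $v_0=0$ on $\partial\Omega$, integrating by parts gives $\int_\Omega F(v_0)\operatorname{div}R\,dx=-\int_\Omega f(v_0)\langle\nabla v_0,R\rangle\,dx$. Substituting the equation $f(v_0)=-\operatorname{div}(|\nabla v_0|^{p-2}\nabla v_0)$ and integrating by parts once more, and using that on $\partial\Omega$ the tangential gradient vanishes so that $\nabla v_0=(\partial v_0/\partial n)\,n$ and $|\nabla v_0|=|\partial v_0/\partial n|$, one finds that $\int_\Omega F(v_0)\operatorname{div}R\,dx$ equals $\tfrac{p-1}{p}\int_{\partial\Omega}|\partial v_0/\partial n|^p\langle R,n\rangle\,d\sigma$ plus precisely the two gradient terms already present in the volume formula. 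Substituting back, the gradient terms cancel and only the boundary integral survives, with the sign $-\tfrac{p-1}{p}$, which is the asserted identity.

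For the continuity of $\mu_\pm$ at $t=0$, the inequality $\mu_+(\Omega_t)\le E[\alpha(v_t)v_t]$ together with the differentiability just established yields $\limsup_{t\to0}\mu_+(\Omega_t)\le\mu_+(\Omega)$; the reverse inequality follows by transporting a minimiser on $\Omega_t$ back to $\Omega$ through $\Phi_t^{-1}$, rescaling it onto $\mathcal N(\Omega)$, and using the uniform a priori bounds on constant-sign minimisers furnished by $(A_4)$. The principal obstacle is the second integration by parts above: it formally requires second derivatives of $v_0$, whereas $v_0$ is only $C^{1,\beta}$ and the $p$-Laplacian degenerates on the set $\{\nabla v_0=0\}$. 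This is handled by invoking the Pohozaev--Rellich identity known to hold for $C^{1,\beta}$ weak solutions of the $p$-Laplacian (or by an approximation and cut-off argument near the critical set), which is the genuinely delicate analytic point; by comparison the envelope step and the change of variables are routine.
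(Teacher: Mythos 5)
Your derivation of the Hadamard formula is essentially the paper's own proof: transport everything to the fixed domain by the change of variables $y=\Phi_t(x)$ (Lemmas~\ref{lem:diffPhi} and~\ref{lem:diffnabla}), apply the implicit function theorem to the Nehari condition using the strict inequality in $(A_3)$ (Lemma~\ref{lem:differ}), cancel the $\alpha'$ term via the Nehari/envelope identity (Proposition~\ref{prop:differentiable}), and then convert the volume integral into a boundary integral. On the last step you correctly identify the regularity obstruction ($v_0$ is only $C^{1,\beta}$, the operator degenerates where $\nabla v_0=0$) and the correct cure: the paper does exactly what you suggest, quoting the generalized Pohozaev identity for weak solutions (Proposition~\ref{prop:Pohozaev}, taken from Degiovanni--Musesti--Squassina) rather than integrating by parts by hand.

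The genuine gap is in the continuity claim, specifically the inequality $\mu_+(\Omega)\le\liminf_{t\to0}\mu_+(\Omega_t)$, which you compress into one sentence; this is in fact the bulk of the paper's Proposition~\ref{proof:contin}. Two ingredients are missing, and neither is ``furnished by $(A_4)$''. First, you must exclude that minimizers $u_t$ of $\mu_+(\Omega_t)$ collapse to zero, i.e.\ you need a uniform lower bound $\|\nabla u_t\|_{L^p(\Omega_t)}\ge C>0$; without it nothing prevents $\mu_+(\Omega_t)\to0<\mu_+(\Omega)$. In the paper this bound comes from $(A_2)$, the second part of $(A_3)$, and the uniform spectral gap $\limsup_{s\to0}f(s)/(|s|^{p-2}s)<\widetilde C<\lambda_p(\Omega_t)$ of Remark~\ref{rem:Omega_t} (which rests on the continuity of $\lambda_p(\Omega_t)$); $(A_4)$ only yields the uniform \emph{upper} bound on $\|\nabla u_t\|_{L^p(\Omega_t)}$ once $\mu_+(\Omega_t)$ is known to be bounded above. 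Second, your transported function $u_t\circ\Phi_t$ does not lie on $\mathcal{N}(\Omega)$, so you must rescale it by a constant $\beta_t$ with $\beta_t\,(u_t\circ\Phi_t)\in\mathcal{N}(\Omega)$, and the resulting comparison $\mu_+(\Omega)\le E[\beta_t(u_t\circ\Phi_t)]\le\mu_+(\Omega_t)+O(t)\bigl(\beta_t^p\|\nabla u_t\|_{L^p(\Omega_t)}^p+\int_{\Omega_t}|F(\beta_t u_t)|\,dx\bigr)$ is useless unless you prove that $\beta_t$ stays bounded as $t\to0$; this requires its own argument (again resting on the uniform bounds above), which you do not supply. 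The paper sidesteps the $\beta_t$ issue by a different device: it extends the minimizers by zero to a fixed larger domain, extracts a weak $W^{1,p}$ limit $u$, shows $u$ is a nontrivial nonnegative element of $\wolp(\Omega)$ (here the $C^{2,\varsigma}$ smoothness of $\partial\Omega$ is needed to conclude $u\in\wolp(\Omega)$ from $u\equiv0$ outside $\Omega$), and only then renormalizes the single limit function onto $\mathcal{N}(\Omega)$, closing the argument with Lemma~\ref{lem:superlin} and weak lower semicontinuity. Your transport route could likely be made to work, and would even avoid that Sobolev-extension subtlety, but as written it is an assertion rather than a proof, and it attributes the needed estimates to the wrong hypothesis.
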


\begin{remark}
	It is rather counterintuitive that the domain derivative does not explicitly depend on the weak term $f$. 
\end{remark}

Origins of this problematic go back to the work of Hadamard~\cite{hadamard}, where he proved that the first eigenvalue $\lambda_2(\Omega_t)$ of the zero Dirichlet Laplace operator in $\Omega_t$ is differentiable at $t=0$ and deduced its expression (see \eqref{hadamar_eigen} below with $p=2$) which nowadays is known as the \textit{Hadamard formula}. We refer the reader, for instance, to \cite{sokolowski,henrotpiere,delfour} for the general theory of the shape optimization and related historical remarks. The first eigenvalue \eqref{mu:lambda} in the general case $p>1$ was treated in \cite{garcia} (see also~\cite{lamberti}), and it was proved that 
\begin{equation}\label{hadamar_eigen}
\left.\frac{\partial \lambda_p(\Omega_t)}{\partial t}\right|_{t=0}  = -(p-1) \int_{\partial \Omega} \left|\frac{\partial \varphi_p}{\partial n}\right|^p \, \left<R, n\right> \, d\sigma,
\end{equation}
where $\varphi_p$ is the eigenfunction associated with $\lambda_p(\Omega)$ and normalized such that $\|\varphi_p\|_{L^p(\Omega)} = 1$. 
At the same time, in Remark~\ref{rem:nondiff} below we discuss that $\mu_+(\Omega_t)$ and $\mu_-(\Omega_t)$ are not differentiable at $t=0$, in general.

Note that the main prototypical nonlinearity for \eqref{D} is given by $f(u) = |u|^{q-2} u$, where $q \in (p, p^*)$. It can easily be checked that assumptions $(A_1) - (A_4)$ are satisfied. Due to the homogeneity and oddness of $f$, the problem of finding the least critical levels $\mu_\pm(\Omega)$ can be rewritten in the form of the nonlinear Rayleigh quotient 
\begin{equation}\label{mu:Hom}
\mu_q(\Omega) = 
\min_{u \in \wolp(\Omega) \setminus \{0\}} J(u) 
:=
\min_{u \in \wolp(\Omega) \setminus \{0\}}
\frac{\int_\Omega |\nabla u|^p \, dx}{\left(\int_\Omega |u|^q \, dx\right)^\frac{p}{q}}.
\end{equation}
The minimum is achieved, and, after an appropriate normalization, corresponding minimizers satisfy \eqref{D}. These facts remain valid for all $q \in [1, p^*)$.

As a corollary of the proof of Theorem~\ref{thm:1} we obtain the following fact.
\begin{thm}\label{thm:1.5}
	Let $q \in [1,p^*)$. Then $\mu_q(\Omega_t)$ is continuous at $t=0$. Moreover, if $u_0$ is a minimizer of $\mu_q(\Omega)$ normalized such that $\|u_0\|_{L^q(\Omega)} = 1$, and $u_t(y) := u_0(\Phi_t^{-1}(y))$, $y \in \Omega_t$, then $J(u_t)$ is differentiable with respect to $t \in (-\delta, \delta)$ and 
	\begin{equation*}
	\left.\frac{\partial J(u_t)}{\partial t} \right|_{t=0}= - (p-1) \int_{\partial \Omega} \left| \frac{\partial u_0}{\partial n} \right|^p \left<R, n\right> \,d\sigma.
	\end{equation*}
\end{thm}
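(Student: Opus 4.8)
The plan is to follow the same strategy as in the proof of Theorem~\ref{thm:1}: pull the Rayleigh quotient back to the fixed domain $\Omega$ via the change of variables $y = \Phi_t(x)$, differentiate the resulting integrals in $t$, and then convert the bulk expression into a boundary integral using the Euler--Lagrange equation for $u_0$. Writing $A_t(x) := D\Phi_t(x) = I + t\,DR(x)$ and using $u_t(\Phi_t(x)) = u_0(x)$ together with $\nabla_y u_t = (A_t^{T})^{-1}\nabla u_0$, the substitution gives
\begin{equation*}
J(u_t) = \frac{\int_\Omega |(A_t^{T})^{-1}\nabla u_0|^p\,\det A_t\,dx}{\left(\int_\Omega |u_0|^q\,\det A_t\,dx\right)^{p/q}},
\end{equation*}
where now every integral is over the fixed $\Omega$. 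Since $\det A_t = 1 + t\,\text{div}\,R + O(t^2)$ and $(A_t^{T})^{-1} = I - t\,DR^{T} + O(t^2)$, each integrand is smooth in $t$ near $0$, so differentiation under the integral sign is legitimate on the fixed domain and $J(u_t)$ is differentiable at $t=0$.

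First I would differentiate numerator and denominator separately. Using $\frac{d}{dt}|(A_t^{T})^{-1}\nabla u_0|^p\big|_{t=0} = -p|\nabla u_0|^{p-2}\langle DR\,\nabla u_0, \nabla u_0\rangle$ and the normalization $\|u_0\|_{L^q(\Omega)}=1$ (so the denominator equals $1$ and the numerator equals $\mu_q(\Omega)$ at $t=0$), the quotient rule yields
\begin{equation*}
\left.\frac{\partial J(u_t)}{\partial t}\right|_{t=0} = \int_\Omega\Big(|\nabla u_0|^p\,\text{div}\,R - p|\nabla u_0|^{p-2}\langle DR\,\nabla u_0, \nabla u_0\rangle\Big)\,dx - \frac{p\,\mu_q(\Omega)}{q}\int_\Omega |u_0|^q\,\text{div}\,R\,dx.
\end{equation*}
The remaining task is to show that this right-hand side collapses to $-(p-1)\int_{\partial\Omega}|\partial_n u_0|^p\langle R,n\rangle\,d\sigma$, where $\partial_n u_0 = \partial u_0/\partial n$.

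For this I would use two ingredients. The first is the divergence theorem applied to the field $|\nabla u_0|^p R$, which gives $\int_{\partial\Omega}|\nabla u_0|^p\langle R,n\rangle\,d\sigma = \int_\Omega |\nabla u_0|^p\,\text{div}\,R\,dx + p\int_\Omega |\nabla u_0|^{p-2} R^{T} D^2 u_0\,\nabla u_0\,dx$. The second is to test the equation $-\Delta_p u_0 = \mu_q(\Omega)|u_0|^{q-2}u_0$ against the transport function $\phi = \langle R, \nabla u_0\rangle$. Here lies the crucial point: $\phi$ does \emph{not} vanish on $\partial\Omega$ (indeed $\phi = (\partial_n u_0)\langle R,n\rangle$ there), so it is not an admissible element of $\wolp(\Omega)$ and Green's formula must be used with its boundary term retained. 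Since $u_0=0$ on $\partial\Omega$ forces $\nabla u_0 = (\partial_n u_0)\,n$ and hence $|\nabla u_0|^p = |\partial_n u_0|^p$ on $\partial\Omega$, that boundary term equals exactly $\int_{\partial\Omega}|\partial_n u_0|^p\langle R,n\rangle\,d\sigma$, while the nonlinear right-hand side integrates by parts to $-\frac{\mu_q(\Omega)}{q}\int_\Omega |u_0|^q\,\text{div}\,R\,dx$ with no boundary contribution (again because $u_0|_{\partial\Omega}=0$). Combining the two identities to solve for $\int_\Omega |\nabla u_0|^{p-2}\langle DR\,\nabla u_0,\nabla u_0\rangle\,dx$ and substituting back, the two $\text{div}\,R$ bulk integrals cancel and the boundary integral, which enters with coefficient $(p-1)/p$, is multiplied by $-p$, producing the factor $-(p-1)$ and the claimed formula.

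The step I expect to be the main obstacle is the rigorous justification of these integration-by-parts identities, since they involve the Hessian $D^2 u_0$ whereas $p$-Laplacian solutions are in general only $C^{1,\beta}(\overline\Omega)$ and degenerate on the critical set $\{\nabla u_0 = 0\}$. I would handle this either through the regularity machinery already developed for Theorem~\ref{thm:1} (performing the whole computation at the level of the pulled-back integrals and passing to the limit, so that pointwise second derivatives never appear), or by approximation combined with the facts that $\partial\Omega$ is of class $C^{2,\varsigma}$ and $\nabla u_0 \neq 0$ near $\partial\Omega$ by the Hopf lemma, which suffices to make the boundary manipulations legitimate. Finally, continuity of $\mu_q(\Omega_t)$ at $t=0$ follows by squeezing: the upper bound $\mu_q(\Omega_t)\le J(u_t)$ gives $\limsup_{t\to0}\mu_q(\Omega_t)\le\mu_q(\Omega)$, while transporting a minimizer on $\Omega_t$ back to $\Omega$ and applying the same change of variables yields $\liminf_{t\to0}\mu_q(\Omega_t)\ge\mu_q(\Omega)$.
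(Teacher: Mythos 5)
Your proposal is correct and follows the paper's intended skeleton --- pull the quotient back to $\Omega$, differentiate under the integral sign, convert the resulting bulk expression into a boundary term, and prove continuity by a two-sided comparison --- but it deviates at the key step, and the deviation is worth spelling out. The paper's proof of Theorem~\ref{thm:1.5} is simply: repeat the proof of Theorem~\ref{thm:1} without the normalization $\alpha_t$ (homogeneity of $J$ makes it unnecessary); in particular, the passage from the bulk integrals to the boundary integral is not done by hand but by invoking the generalized Pohozaev identity, Proposition~\ref{prop:Pohozaev}, applied to $-\Delta_p u_0=\mu_q(\Omega)|u_0|^{q-2}u_0$, i.e.\ with $F(s)=\tfrac{\mu_q(\Omega)}{q}|s|^q$; that identity is stated for $C^{1,\beta}(\overline{\Omega})$ weak solutions and therefore already absorbs all regularity difficulties. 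You instead re-derive this identity classically (divergence theorem for $|\nabla u_0|^p R$ plus testing the equation against $\left<R,\nabla u_0\right>$ with the boundary term retained); your bookkeeping is correct and reproduces exactly \eqref{Pohozaev}, but, as you yourself observe, the computation is formal since it involves $D^2 u_0$. Of your two proposed repairs, the first (reuse the machinery already developed for Theorem~\ref{thm:1}) is the right one --- it is literally Proposition~\ref{prop:Pohozaev} --- while the second (approximation plus Hopf behavior near $\partial\Omega$) would not suffice by itself: the Hopf lemma controls only a neighborhood of $\partial\Omega$, whereas the obstruction to the integration by parts is the interior critical set $\{\nabla u_0=0\}$, and justifying the identity across that set is precisely the content of the result of \cite{degiovanni} behind Proposition~\ref{prop:Pohozaev}. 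One point where your route is genuinely better: continuity. For Theorem~\ref{thm:1} the inequality $\liminf_{t\to0}\mu_+(\Omega_t)\geqslant\mu_+(\Omega)$ requires the compactness argument of Proposition~\ref{proof:contin}, but for the homogeneous functional $J$ your pullback of a minimizer on $\Omega_t$ gives $\mu_q(\Omega_t)\geqslant c(t)\,\mu_q(\Omega)$ with $c(t)\to1$, since $(\Phi_t')^{-1}$ and $\det\Phi_t'$ are uniformly close to the identity and to $1$ on a neighborhood of $\overline{\Omega}$; this two-line squeeze exploits homogeneity and has no analogue in the non-homogeneous setting of Theorem~\ref{thm:1}.
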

 
\medskip
The second aim of our work is to use Theorem~\ref{thm:1} for studying a shape optimization problem for $\mu_\pm(\Omega)$ over a special class of domains. Namely, let $\Omega$ be an open spherical annulus $B_{R_1}(x) \setminus \overline{B_{R_0}(y)}$, where $|x-y| < R_1 - R_0$. Due to the invariance of \eqref{D} upon orthogonal transformations of coordinates, we can take $x=0$ and $y = se_1$, where $s \in [0, R_1-R_0)$ and $e_1$ is the first coordinate vector. 
For simplicity and to avoid ambiguity, we denote 
\begin{align*}
\tilde{\mu}_\pm(s) &:= \mu_\pm(B_{R_1}(0) \setminus \overline{B_{R_0}(se_1)}).
\end{align*}
In order to guarantee the existence of minimizers of $\tilde{\mu}_\pm(s)$ for all $s \in [0, R_1-R_0)$ (see Appendix~\ref{section:appendix}) the second part of assumption $(A_3)$ must be satisfied for any annulus $B_{R_1}(0) \setminus \overline{B_{R_0}(se_1)}$, $s \in [0, R_1-R_0)$. 
To this end we impose the following additional assumption (see a discussion below):
\begin{itemize}\addtolength{\itemindent}{1em}
	\item[$(A_3^*)$] $\limsup\limits_{s \to 0} \dfrac{f(s)}{|s|^{p-2}s} < \lambda_p(B_{R_1}(0) \setminus \overline{B_{R_0}((R_1-R_0)e_1)})$.
\end{itemize}

We consider the following question: 
\begin{center}
\textit{Which value of the displacement $s \in [0, R_1-R_0)$ maximizes/minimizes $\tilde{\mu}_\pm(s)$?}
\end{center}

In the case of the first eigenvalue \eqref{mu:lambda} this question was addressed in several articles, see \cite{hersh,ramm,harrel,kesavan} for the linear case $p=2$, and \cite{chorwad,anoopbobsasi} for general $p>1$. 
It was proved in \cite[Theorem~1.1]{anoopbobsasi} that $\lambda_p(s) := \lambda_p(B_{R_1}(0) \setminus \overline{B_{R_0}(se_1)})$ is continuous and strictly decreasing on $[0, R_1-R_0)$, which implies that $\lambda_p(s)$ attains its maximum if and only if $s=0$ and attains its minimum if and only if $s = R_1-R_0$. These facts justify the choice $s=R_1-R_0$ in assumption~$(A_3^*)$. 

The common approach to prove the monotonicity of $\lambda_p(s)$ is to consider a perturbation $\Phi_t$ which shifts the inner boundary $\partial B_{R_0}(se_1)$ along the direction $e_1$ while the outer boundary $\partial B_{R_1}(0)$ remains fixed. Then the Hadamard formula \eqref{hadamar_eigen} allows to find a derivative of $\lambda_p(s)$ with respect to the displacement $s$ in terms of an integral over the inner boundary. 
Hence, to show that $\lambda_p'(s) < 0$, one can try to compare values of the normal derivatives of the  eigenfunction of $\lambda_p(s)$ on hemispheres $\{x \in \partial B_{R_0}(se_1):\, x_1 < s\}$ and $\{x \in \partial B_{R_0}(se_1):\, x_1 > s\}$. In the linear case $p=2$, reflection arguments together with the strong comparison principle can be applied to show that such values are strictly ordered, which leads to $\lambda_p'(s) < 0$ for any $s \in (0, R_1-R_0)$. (Note that $\lambda_p'(0) = 0$ due to symmetry reasons.)
At the same time, the lack of strong comparison principles for the general nonlinear case $p>1$ entails the use of additional arguments. In \cite{chorwad}, applying an appropriate version of the weak comparison principle, it was shown that $\lambda_p'(s) \leqslant 0$ for all $s \in (0, R_1-R_0)$. The strict negativity of $\lambda_p'(s)$ was obtained recently in \cite{anoopbobsasi} bypassing the usage of (global) strong comparison results. 

Considering the least nontrivial critical levels $\tilde{\mu}_\pm(s)$, we follow the strategy described above. 
To this end, we employ two symmetrization methods: \textit{polarization} \cite{brocksol,BartschWethWillem} and \textit{spherical symmetrization} (i.e., \textit{foliated Schwarz symmetrization}) \cite{kawohl,BartschWethWillem}. 
With the help of these methods, we use the ideas from \cite{anoopbobsasi}, to derive the following result.
\begin{thm}\label{thm:2}
	Assume that $(A_1)-(A_4)$ and $(A_3^*)$ are satisfied. 
	Then $\tilde{\mu}_+(s)$ and $\tilde{\mu}_-(s)$ are continuous and strictly decreasing on $[0, R_1-R_0)$.
\end{thm}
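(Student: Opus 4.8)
The plan is to follow the blueprint announced just before the statement: differentiate along an inner-shift perturbation via Theorem~\ref{thm:1}, and then pin down the sign of the resulting boundary integral by a polarization comparison of the normal derivatives on the two inner hemispheres. Continuity of $\tilde\mu_\pm$ on $[0,R_1-R_0)$ is immediate from the continuity assertion of Theorem~\ref{thm:1} applied with base domain $\Omega_s:=B_{R_1}(0)\setminus\overline{B_{R_0}(se_1)}$ at each $s$. For monotonicity I would fix $s\in(0,R_1-R_0)$ and pick $R\in C^1(\reals^N,\reals^N)$ with $R\equiv e_1$ near the inner sphere $\Gamma_{\mathrm{in}}:=\partial B_{R_0}(se_1)$ and $R\equiv 0$ near $\partial B_{R_1}(0)$; such a cutoff exists since the two spheres are disjoint compacta. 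For small $t$, $\Phi_t$ is a diffeomorphism acting as translation by $te_1$ near $\Gamma_{\mathrm{in}}$ and as the identity near the outer sphere, so $\Phi_t(\Omega_s)=\Omega_{s+t}$ and hence $\tilde\mu_+(s+t)\le E[\alpha(v_t)v_t]=:g(t)$ with $g(0)=\tilde\mu_+(s)$, where $v_0$ is a least energy positive solution on $\Omega_s$. Since $\langle R,n\rangle=0$ on the outer boundary and $n=-(x-se_1)/R_0$ on $\Gamma_{\mathrm{in}}$, formula \eqref{mu:Neh2} yields
\[
g'(0)=\frac{p-1}{p R_0}\int_{\Gamma_{\mathrm{in}}}\left|\frac{\partial v_0}{\partial n}\right|^{p}(x_1-s)\,d\sigma .
\]
Thus $D^{+}\tilde\mu_+(s)\le g'(0)$, and everything reduces to showing this integral is \emph{strictly negative}.

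To control the sign I would reflect across the hyperplane $H:=\{x_1=s\}$ through the inner centre, with reflection $\sigma$. A direct computation gives $\sigma(\Omega_s\cap\{x_1>s\})\subsetneq\Omega_s$: the inner ball is $\sigma$-invariant, the outer ball $B_{R_1}(0)$ reflects to $B_{R_1}(2se_1)$, and for $y=\sigma x$ with $x_1>s$ one has $|y|^2-|x|^2=4s(s-x_1)<0$. Hence polarization with respect to the half-space $\{x_1<s\}$ maps $\wolp(\Omega_s)$ into itself. Take a positive minimizer $v_0$ (chosen, if convenient, foliated Schwarz symmetric about the $e_1$-axis via spherical symmetrization, which keeps its level sets well behaved) and let $v_0^{H}$ be its polarization. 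Polarization preserves every integral $\int F(tu)$ and $\int u\,f(u)$ and does not increase $\int|\nabla u|^p$ (cf.\ \cite{brocksol}), so the fiber $t\mapsto E[tv_0^H]$ lies below $t\mapsto E[tv_0]$; maximizing over $t$ (the fiber has a unique positive maximum by Lemma~\ref{lem:superlin}) gives $\mu_+\le\max_t E[tv_0^H]\le\mu_+$, forcing $\int|\nabla v_0^H|^p=\int|\nabla v_0|^p$. The equality case of the polarization inequality then yields $v_0=v_0^{H}$ or $v_0=v_0^{H}\circ\sigma$; the latter would make $v_0$ vanish on the nonempty far-left region $\Omega_s\setminus\overline{\sigma\Omega_s}$, contradicting the strong maximum principle, so $v_0=v_0^H$. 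Consequently $v_0(x)\ge v_0(\sigma x)$ for all $x\in\Omega_s\cap\{x_1<s\}$.

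Writing a point of $\Gamma_{\mathrm{in}}$ as $se_1+R_0\omega$ and differentiating radially into the annulus, the ordering $v_0\ge v_0\circ\sigma$ gives, for paired points $x^-\in\Gamma^-:=\Gamma_{\mathrm{in}}\cap\{x_1<s\}$ and $x^+=\sigma x^-\in\Gamma^+$, the inequality $|\partial_n v_0(x^-)|\ge|\partial_n v_0(x^+)|$; since $x_1-s$ is odd under $\sigma$, the integrand in $g'(0)$ is $\le 0$ after pairing, so $D^{+}\tilde\mu_+(s)\le g'(0)\le 0$. The crux is upgrading this to strict. Here I would exploit that, by the Hopf lemma, $|\nabla v_0|$ is bounded away from $0$ in a one-sided neighbourhood of $\Gamma_{\mathrm{in}}$, so there the difference $W:=v_0-v_0\circ\sigma\ge 0$ solves a \emph{uniformly elliptic} linear equation $LW=c(x)W$ with bounded $c$ (the linearization of $-\Delta_p$, nondegenerate precisely because the gradients do not vanish). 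If the normal derivatives coincided a.e.\ on $\Gamma_{\mathrm{in}}$, then $W$ and $\partial_n W$ would both vanish on $\Gamma^-$; the Hopf lemma for $L$ would force $W\equiv 0$ near $\Gamma^-$, and unique continuation would propagate $v_0\equiv v_0\circ\sigma$ across the overlap, contradicting $v_0>v_0\circ\sigma$ on the far-left region. Hence the integral is strictly negative and $D^{+}\tilde\mu_+(s)<0$ for every $s\in(0,R_1-R_0)$. A continuous function with nonpositive upper right Dini derivative is non-increasing, and strict negativity rules out constancy on any subinterval, giving strict decrease on $(0,R_1-R_0)$; continuity at $s=0$ (where symmetry makes the derivative vanish) extends this to the closed-at-left interval. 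The case of $\tilde\mu_-$ is identical, replacing $v_0$ by $-w_0\ge 0$ and $f$ by $\tilde f(\xi):=-f(-\xi)$, which again satisfies $(A_1)$–$(A_4)$.

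I expect the decisive difficulty to be exactly this final strictness step. For $p\neq 2$ the $p$-Laplacian is degenerate and global strong comparison principles are unavailable, which is why the sign of $g'(0)$ cannot be settled by a naive reflection-and-comparison argument; the saving feature is to localize the comparison to the nondegeneracy region near $\Gamma_{\mathrm{in}}$, where Hopf guarantees $|\nabla v_0|\neq 0$ and the linearized operator is uniformly elliptic, which is the technical heart and where the ideas of \cite{anoopbobsasi} enter. Verifying the equality-case characterization of polarization for the merely Hölder-regular nonlinearity $f$, and confirming that the chosen minimizer carries enough symmetry for it, are the remaining delicate points.
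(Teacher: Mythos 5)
Your setup---the inner-shift perturbation, the Dini-derivative bound via Theorem~\ref{thm:1}, and the polarization/pairing argument giving $D\tilde{\mu}_+(s)\leqslant 0$---coincides with the paper's Lemma~\ref{lem:mu<0}. The proposal breaks at exactly the step you call the technical heart, and the mechanism you propose there does not exist for general $p>1$. From equality of the paired normal derivatives you correctly conclude, via Hopf and the locally uniformly elliptic linearization, that $W=v_0-v_0\circ\sigma$ vanishes identically in a \emph{thin annular neighbourhood} of the inner sphere (this is precisely the conclusion of Lemma~\ref{lem:mu<0}). But you then invoke ``unique continuation'' to propagate $W\equiv 0$ across the whole overlap region and contradict $W>0$ on the far-left part. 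The linearized operator is uniformly elliptic only where $|\nabla v_0|$ is bounded away from zero; in the interior the gradient may vanish (the paper's Lemma~\ref{lem:rad1} explicitly allows $|\nabla v|=0$ on an entire sphere $\partial B_{r_0}(se_1)$), the equation degenerates there, and no unique continuation or strong comparison principle is available for $p\neq 2$. This is exactly the obstruction spelled out in Remark~\ref{rem:noscp}; it is why the $p=2$ reflection proof does not generalize, and your argument is, in effect, that reflection proof. The paper circumvents it globally: spherical symmetrization gives an axially symmetric minimizer (Lemma~\ref{lem:sphericallysym}); polarizations through \emph{all} hyperplanes through the inner center, and separately (after a second perturbation translating the outer sphere) through the outer center, show that $D\tilde{\mu}_+(s)=0$ forces $v$ to be radial, with vanishing gradient on the interface, in annular neighbourhoods of \emph{both} boundary components (Lemmas~\ref{lem:rad1} and \ref{lem:rad2}). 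Since the two radiality centers $se_1$ and $0$ differ, $v$ satisfies the Nehari constraint separately on three subdomains, and the glued competitor (rescaled outer radial piece, constant middle piece, inner radial piece) lies on $\mathcal{N}(\Omega)$ after normalization and has strictly smaller energy, because the constant middle piece contributes negative energy while $E_2[v]>0$. Nothing in your proposal substitutes for this construction, so $D\tilde{\mu}_+(s)<0$ remains unproven.

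A secondary, repairable flaw: your rigidity claim that equality of Dirichlet integrals forces $v_0=v_0^{H}$ or $v_0=v_0^{H}\circ\sigma$ is unfounded. Polarization is a two-point rearrangement that \emph{always} preserves $\int_\Omega|\nabla u|^p\,dx$ exactly (\cite[Corollary~5.1]{brocksol}), so equality carries no information and no such dichotomy follows; a minimizer need not coincide with its polarization. The correct move, which is what the paper does, is to observe that the bound \eqref{strange1} holds for an \emph{arbitrary} minimizer and to apply it to the polarization $V$ itself: $V$ is a minimizer because all three integrals are preserved, and it is ordered with respect to the reflection by construction, so your pairing argument goes through verbatim with $V$ in place of $v_0$.
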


As a corollary of the proof of Theorem~\ref{thm:2} we have the following fact which will be used later. 
\begin{proposition}\label{prop:annulus}
	Assume that $(A_1)-(A_4)$ are satisfied. 
	Then $\tilde{\mu}_+(s)$ and $\tilde{\mu}_-(s)$ are continuous and strictly decreasing for sufficiently small $s \geqslant 0$. 
\end{proposition}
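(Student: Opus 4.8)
The plan is to run the proof of Theorem~\ref{thm:2} essentially verbatim, after observing that the only role of the stronger hypothesis $(A_3^*)$ there is to guarantee the existence of least energy constant-sign solutions (hence of the minimizers realizing $\tilde{\mu}_\pm(s)$) for \emph{every} $s \in [0, R_1-R_0)$, and that this existence is automatic for small $s$ under $(A_1)-(A_4)$ alone. Indeed, by \cite{anoopbobsasi} the map $s \mapsto \lambda_p(B_{R_1}(0)\setminus\overline{B_{R_0}(se_1)})$ is continuous and strictly decreasing on $[0, R_1-R_0)$, so it tends to $\lambda_p(\Omega)$ of the concentric annulus $\Omega = B_{R_1}(0)\setminus\overline{B_{R_0}(0)}$ as $s \to 0^+$. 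Since the second part of $(A_3)$ gives $\limsup_{\tau\to 0} f(\tau)/(|\tau|^{p-2}\tau) < \lambda_p(\Omega)$, there is some $s_* \in (0, R_1-R_0)$ such that $\limsup_{\tau\to 0} f(\tau)/(|\tau|^{p-2}\tau) < \lambda_p(B_{R_1}(0)\setminus\overline{B_{R_0}(se_1)})$ for all $s \in [0, s_*)$, and the existence theory of Appendix~\ref{section:appendix} then supplies minimizers of $\tilde{\mu}_\pm(s)$ on $[0, s_*)$. Continuity of $\tilde{\mu}_\pm$ on $[0, s_*)$ follows by applying the continuity statement of Theorem~\ref{thm:1} at each such $s$.

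For the strict monotonicity I fix $s \in (0, s_*)$, let $v_0$ be a minimizer of $\tilde{\mu}_+(s)$ (the case of $\tilde{\mu}_-$ being identical, using $w_0$ and the second formula in \eqref{mu:Neh2}), and choose a deformation $\Phi_\tau$ that rigidly shifts a neighbourhood of the inner hole by $\tau e_1$ and is the identity near the outer sphere, so that $\Phi_\tau(\Omega_s) = \Omega_{s+\tau}$ and the field $R$ satisfies $R = e_1$ on $\partial B_{R_0}(se_1)$ and $R = 0$ on $\partial B_{R_1}(0)$. Pushing $v_0$ forward gives an admissible competitor for $\tilde{\mu}_+(s+\tau)$, whence $\tilde{\mu}_+(s+\tau) \leqslant E[\alpha(v_\tau) v_\tau]$ with equality at $\tau = 0$. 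On the inner sphere the outward normal points towards $se_1$, so $\langle R, n\rangle = -(x_1-s)/R_0$, while $\langle R, n\rangle = 0$ on the outer sphere; combining this with the Hadamard formula \eqref{mu:Neh2} of Theorem~\ref{thm:1} yields the one-sided Dini bound
\begin{equation*}
D^+\tilde{\mu}_+(s) \leqslant \left.\frac{\partial E[\alpha(v_\tau) v_\tau]}{\partial \tau}\right|_{\tau=0} = \frac{p-1}{p R_0} \int_{\partial B_{R_0}(se_1)} \left|\frac{\partial v_0}{\partial n}\right|^p (x_1-s)\, d\sigma .
\end{equation*}

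It remains to prove that this integral is strictly negative, which is the heart of the matter. Reflecting across the hyperplane $\{x_1 = s\}$ pairs the thin cap $\{x_1 > s\}$ of $\partial B_{R_0}(se_1)$ with the thick cap $\{x_1 < s\}$ and rewrites the integral as $\int_{\{x_1>s\}}\bigl(|\partial_n v_0(x)|^p - |\partial_n v_0(\bar x)|^p\bigr)(x_1-s)\, d\sigma$, where $\bar x$ is the reflection of $x$. Using polarization with respect to $\{x_1 = s\}$ together with foliated Schwarz symmetrization about the $e_1$-axis — exactly as in the proof of Theorem~\ref{thm:2}, following the strategy of \cite{anoopbobsasi} — one may take $v_0$ symmetric and monotone in the polar angle, and a reflection/weak comparison argument then gives the strict ordering $|\partial_n v_0(x)| < |\partial_n v_0(\bar x)|$ on the thin cap. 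Since $(x_1-s) > 0$ there, the integral is strictly negative and $D^+\tilde{\mu}_+(s) < 0$ for every $s \in (0, s_*)$. A continuous function with strictly negative upper-right Dini derivative on $(0, s_*)$ is strictly decreasing there, and continuity at $s = 0$ extends this to $[0, s_*)$. I expect the main obstacle to be precisely this strict comparison of normal derivatives on the two caps: in the absence of a general strong comparison principle for $p \neq 2$ it must be obtained through the polarization argument, whereas everything else is a localization of Theorem~\ref{thm:2} to a neighbourhood of the concentric configuration.
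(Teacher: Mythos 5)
Your reduction of the proposition to ``Theorem~\ref{thm:2} with $(A_3^*)$ replaced by a smallness condition on $s$'' is exactly the paper's intent: $(A_3^*)$ is only used to guarantee minimizers for \emph{all} $s\in[0,R_1-R_0)$, and for small $s$ the continuity of $s\mapsto\lambda_p(B_{R_1}(0)\setminus\overline{B_{R_0}(se_1)})$ together with the second part of $(A_3)$ supplies them (this is Remark~\ref{rem:Omega_t}). Your continuity argument and the one-sided Dini bound via the Hadamard formula \eqref{strange1} also match the paper. The problem is the step you yourself flag as ``the heart of the matter.''

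You claim that, after polarization with respect to $\{x_1=s\}$ and foliated Schwarz symmetrization, ``a reflection/weak comparison argument then gives the strict ordering $|\partial_n v_0(x)|<|\partial_n v_0(\bar x)|$ on the thin cap.'' For general $p>1$ this is precisely what cannot be done, and the paper says so explicitly in Remark~\ref{rem:noscp}: a weak comparison principle yields only the non-strict inequality (this is why \cite{chorwad} obtained only $\lambda_p'(s)\leqslant 0$ for the eigenvalue problem), and no global strong comparison principle is available for the $p$-Laplacian outside restricted ranges of $p$. What polarization plus the \emph{local} strong maximum principle (applicable near $\partial B_{R_0}(se_1)$ because the Hopf principle gives $|\nabla v|>\eta$ there) actually produces is a dichotomy: either the strict ordering of normal derivatives holds, or the polarized minimizer is exactly symmetric, $V(x)=V(\rho_{e_1}(x))$, in a one-sided neighbourhood of the inner sphere. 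Your proof simply ignores the second alternative. The paper's entire remaining machinery exists to exclude it: assuming $D\tilde{\mu}_+(s)=0$, Lemmas~\ref{lem:mu<0} and \ref{lem:sphericallysym} combined with Lemma~\ref{lem:rad1} (and, near the outer sphere, Lemmas~\ref{lem:mu<01} and \ref{lem:rad2}) force an axially symmetric minimizer to be \emph{radial} in annular collars adjacent to both boundary components, with $|\nabla v|=0$ on two interior spheres; then $v$ satisfies the Nehari constraint separately on the three resulting subdomains, and gluing $C_1 v$, a constant $C_2$, and $v$ (Lemma~\ref{lem:superlin2}) produces a competitor in $\mathcal{N}(\Omega)$ with energy strictly below $\tilde{\mu}_+(s)$ -- a contradiction. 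Without this second half (or some substitute for it), your argument establishes only $D^+\tilde{\mu}_+(s)\leqslant 0$, i.e.\ non-strict monotonicity, which is not enough for Proposition~\ref{prop:annulus} nor for the nonradiality application in Theorem~\ref{thm:3}.
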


\medskip
The last (but not least) aim of the present article is the investigation of symmetry properties of least energy \textit{nodal} solutions to problem \eqref{D} via the results stated above. 
By a nodal (or, equivalently, sign-changing) solution of \eqref{D} we mean a weak solution $u$ such that $u^\pm := \max\{\pm u, 0\} \not\equiv 0$ in $\Omega$. 
By definition, a nodal set of $u$ is a set $Z = \overline{\{x \in \Omega: u(x) = 0\}}$, and any connected component of $\Omega \setminus Z$ is a nodal domain of $u$. 

Consider the nodal Nehari set
\begin{equation}\label{def:nodalNehari}
\mathcal{M} := \{u \in \wolp(\Omega):\, u^+ \in \mathcal{N}(\Omega), ~ -u^- \in \mathcal{N}(\Omega) \}.
\end{equation}
Evidently, $\mathcal{M}$ contains all nodal solutions of \eqref{D}. 
Moreover, in Appendix~\ref{section:appendix} we discuss that a least energy nodal solution of \eqref{D} can be found as a minimizer of the problem
\begin{equation}\label{nu}
\nu = \min_{u \in \mathcal{M}} E[u].
\end{equation}

Let $\Omega$ be a bounded radial domain in $\mathbb{R}^N$, that is, $\Omega$ is a ball or concentric annulus. 
The study of symmetric properties of least energy nodal solutions to~\eqref{D} in such $\Omega$ was initiated in \cite{BartschWethWillem}, where it was shown that in the linear case $p=2$ any minimizer of $\nu$ is a foliated Schwartz symmetric function with precisely two nodal domains. 
Here we consider the following question:
\begin{center}
	\textit{Is it true that any least energy nodal solution of \eqref{D} in a bounded radial $\Omega$ is nonradial?}
\end{center}
This question was first stated and answered affirmatively in \cite{aftalion} for the linear case $p=2$. The authors obtained the lower estimate $N+1$ on the Morse index of radial nodal solutions to \eqref{D} and used the fact that the Morse index of any least energy nodal solution of \eqref{D} is exactly $2$ (see \cite{bartschweth}). Note that the assumption $E \in C^2(\accentset{\circ}{W}_2^{1}(\Omega))$ is essential for the arguments of \cite{aftalion} and \cite{bartschweth}. 
Later, under weaker assumptions on $f$ which allow $E$ to be only $C^1(\accentset{\circ}{W}_2^{1}(\Omega))$, the nonradiality was proved in \cite{bartsch} by performing the idea of \cite{aftalion} in terms of a ``generalized'' Morse index. 
Nevertheless, necessity to work with the linearized problem associated with \eqref{D} at sign-changing solutions makes a generalization of the methods of \cite{aftalion} and \cite{bartsch} to the case $p>1$ nonobvious. (See \cite{castorina} about the linearization of the $p$-Laplacian).
We also refer to \cite{BartschWethWillem,T,CC} for some partial results on the nonradiality problem.

Here we give the affirmative answer on the above question on the nonradiality in the general case $p>1$ using different arguments based on shape optimization techniques. 
\begin{thm}\label{thm:3} 
	Let $\Omega$ be a ball or annulus and let $(A_1)-(A_4)$ be satisfied. 
	Then any minimizer of $\nu$ is nonradial and has precisely two nodal domains. 
\end{thm}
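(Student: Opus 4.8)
The plan is to prove the two assertions — that a minimizer of $\nu$ has precisely two nodal domains and that it is nonradial — in that order, the first feeding into the second. Throughout I write a minimizer $u$ of $\nu$ as $u = u^+ - u^-$ and use the fact (from Appendix~\ref{section:appendix}) that such $u$ is a genuine nodal solution of \eqref{D}. The three elementary facts I would isolate first are: (i) on each nodal domain $D$ the restriction $u|_D$ solves the Dirichlet problem on $D$, hence, extended by zero, lies in $\mathcal{N}(\Omega)$, because the Nehari constraint is local to the support; (ii) the energy $E$ is additive over nodal domains; and (iii) on $\mathcal{N}(\Omega)$ one has $E[v] = \int_\Omega \bigl(\tfrac1p v f(v) - F(v)\bigr)\,dx > 0$, since the integrand is strictly positive for $v \neq 0$ — a direct consequence of the first inequality in $(A_3)$. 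In particular the energy carried by every nodal domain is strictly positive.

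First I would show $u$ has exactly two nodal domains. It has at least two since it changes sign. If it had nodal domains $D_1,\dots,D_k$ with $k \geq 3$, I would choose one domain $D_+$ on which $u > 0$ and one $D_-$ on which $u < 0$, and set $\tilde u := u|_{D_+} + u|_{D_-}$. By (i), $\tilde u^+ = u|_{D_+} \in \mathcal{N}(\Omega)$ and $-\tilde u^- = u|_{D_-} \in \mathcal{N}(\Omega)$, so $\tilde u \in \mathcal{M}$; but by (ii)--(iii), $E[\tilde u] = E_{D_+}[u] + E_{D_-}[u]$ is strictly smaller than $E[u] = \sum_i E_{D_i}[u] = \nu$, as at least one discarded domain contributes strictly positive energy. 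This contradicts minimality, so $k = 2$.

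For nonradiality I argue by contradiction: suppose a minimizer $u$ is radial. By the previous step it has exactly two nodal domains, and a radial sign-changing function on a ball or concentric annulus has nodal set equal to a single sphere $\partial B_\rho(0)$. Thus the nodal domains are an inner region $D_1$ and an outer concentric annulus $D_2 = B_{R_1}(0) \setminus \overline{B_\rho(0)}$ (with $R_1$ the outer radius of $\Omega$), and after relabeling signs $u^+$, $u^-$ are supported on $D_1$, $D_2$. The idea is to break concentricity: for small $s > 0$ I displace the interface sphere to $\partial B_\rho(se_1)$ while keeping $\partial\Omega$ fixed, turning $D_2$ into the eccentric annulus $D_2^s = B_{R_1}(0) \setminus \overline{B_\rho(se_1)}$, and turning $D_1$ either into a rigid translate of a ball (if $\Omega$ is a ball) or into the eccentric annulus $D_1^s = B_\rho(se_1) \setminus \overline{B_{R_0}(0)}$ (if $\Omega$ is an annulus). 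Taking a least energy positive solution on $D_1^s$ and a least energy negative solution on $D_2^s$, both of which vanish on the common sphere $\partial B_\rho(se_1)$, and gluing them yields $\psi \in \mathcal{M}$ with $E[\psi] = \mu_+(D_1^s) + \mu_-(D_2^s)$. By translation invariance of $E$ and of $\mathcal{N}(\Omega)$ one has $\mu_+(D_1^s) = \mu_+(D_1)$ in the ball case, while Proposition~\ref{prop:annulus} applied to the annulus $D_2$ (and, in the annulus case, also to $D_1$) gives $\mu_-(D_2^s) < \mu_-(D_2)$ (resp. additionally $\mu_+(D_1^s) < \mu_+(D_1)$) for small $s > 0$. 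Hence $E[\psi] < \mu_+(D_1) + \mu_-(D_2) \leq E_{D_1}[u^+] + E_{D_2}[u^-] = \nu$, contradicting $\psi \in \mathcal{M}$. Therefore no minimizer is radial.

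The step I expect to be most delicate is arranging the perturbation geometry so that the comparison is favorable uniformly in both cases: displacing the interface sphere must lower the least energy level of the outer annulus (always) and of the inner annulus (in the annulus case) while leaving the inner ball unchanged (in the ball case), and one must check that for small $s$ the two pieces $D_1^s, D_2^s$ remain genuine subdomains of $\Omega$. A secondary point to verify is that Proposition~\ref{prop:annulus} — which requires only $(A_1)$--$(A_4)$, matching the hypotheses of the present theorem — is legitimately applicable to these auxiliary sub-annuli; here I would note that, since each sub-annulus is contained in $\Omega$, monotonicity of $\lambda_p$ under domain inclusion preserves the second inequality in $(A_3)$, so minimizers on the perturbed sub-annuli exist.
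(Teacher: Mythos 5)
Your proposal is correct and takes essentially the same route as the paper: the discard-extra-nodal-domains argument for the count (which the paper delegates to the references \cite{cosio,bartschweth}), and, for nonradiality, exactly the construction of Section~\ref{sec:nonradial} — displace the interface sphere, invoke Proposition~\ref{prop:annulus} (plus translation invariance), and glue constant-sign least energy solutions into a competitor in $\mathcal{M}$ with energy strictly below $\nu$. The only differences are cosmetic (the paper reuses the translated original piece on the inner ball instead of a fresh minimizer), and your remark that domain monotonicity of $\lambda_p$ keeps the second part of $(A_3)$ valid on the sub-annuli is a useful detail the paper leaves implicit.
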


The fact that any minimizer of $\nu$ has exactly two nodal domains can be easily obtained by generalizing arguments from \cite[p.~1051]{cosio} or, equivalently, \cite[p.~6]{bartschweth}.
To prove the nonradiality, we argue by contradiction and apply Proposition~\ref{prop:annulus} to the least critical levels on eccentric annuli generated by small shifts of the nodal set of a radial nodal solution. Then, the union of least energy constant-sign solutions on modified in such a way nodal domains defines a function from $\mathcal{M}$ whose energy is strictly smaller than $\nu$. 

To the best of our knowledge, the idea to use shape optimization techniques for studying properties of nodal solutions was firstly performed in \cite{anoopdrabeksasi}, where it was proved that any second eigenfunction of the $p$-Laplacian on a ball cannot be radial. See also \cite{anoopbobsasi} about a development of this result.

It is worth mentioning that our approach has an intrinsic similarity with the methods of \cite{aftalion} and \cite{bartsch}. 
Consider a radial nodal solution $u$ of \eqref{D} with $k$ nodal domains $D_1, \dots, D_k$. Its nodal set is the union of $k-1$ concentric spheres $S_1, \dots, S_{k-1}$ inside $\Omega$. 
According to Proposition \ref{prop:annulus}, for any fixed $i \in \{1,\dots,k-1\}$ and $j \in \{1,\dots,N\}$, small shifts of $S_i$ along the coordinate axis $e_j$ generate a family of functions along which the energy functional $E$ strictly decreases. Thus, we have $(k-1)N$ such families generated by shifts. 
Moreover, considering the scaling $\alpha u|_{D_i}$, $\alpha>0$, for each of $k$ nodal components $D_1, \dots, D_k$, we produce $k$ additional families of functions with strictly decreasing energy, see Lemma \ref{lem:superlin}. Therefore, in total, we have $(k-1)N+k$ such families (compare with \cite[Theorem~2.2]{BartschWethWillem}). 
Without rigorous justification, we mention that this number can be seen as a weak variant of the Morse index of a radial nodal solution of \eqref{D} with $k$ nodal domains.

\medskip
The rest of the article is organized as follows. 
In Section~\ref{sec:derivative}, we study the dependence of $\mu_\pm(\Omega_t)$ on $t$ and prove Theorems~\ref{thm:1} and \ref{thm:1.5}.
Section~\ref{sec:optimization} is devoted to the study of the shape optimization problem for annular domains and contains the proof of Theorem~\ref{thm:2}.
In Section~\ref{sec:nonradial}, we prove the nonradiality of least energy nodal solutions to \eqref{D} stated in Theorem~\ref{thm:3}. 
Appendix~\ref{section:appendix} contains auxiliary results.

\section{Domain perturbations for least nontrivial critical levels}\label{sec:derivative}
For the proof of Theorems~\ref{thm:1} and \ref{thm:1.5} we need to prepare several auxiliary facts. 
Recall that $\Omega_t = \Phi_t(\Omega)$ is the deformation of $\Omega$, where the diffeomorphism $\Phi_t$ is given by \eqref{Phi}:
$$
\Phi_t(x) = x + t R(x), \quad R \in C^1(\mathbb{R}^N, \mathbb{R}^N), \quad |t| < \delta,
$$
and $\delta > 0$ is sufficiently small. 

Noting that any weak solution of \eqref{D} in $\Omega$ belongs to $C^{1,\beta}(\overline{\Omega})$ (see Remark~\ref{rem:regularity}), we state the following partial case of the generalized Pohozaev identity (see \cite[Lemma~2, p.~323]{degiovanni} with $\mathcal{L}(x,s,\xi) = \frac{1}{p}|\xi|^p - F(s)$).
\begin{proposition}\label{prop:Pohozaev}
	Let $u$ be any weak solution of \eqref{D} in $\Omega$. Then $u$ satisfies
	\begin{align}
	\notag
	\frac 1 p \int_{\Omega} |\nabla u|^p \, {\rm div} (R) \,dx - \int_{\Omega} |\nabla u|^{p-2} \left<\nabla u, \nabla u \cdot R'\right> dx &- \int_\Omega F(u) \,{\rm div} (R) \, dx \\
	\label{Pohozaev}
	&= -\frac{p-1}{p} \int_{\partial \Omega} \left|\frac{\partial u}{\partial n}\right|^p \left<R,n\right> d\sigma,
	\end{align}
	where $n$ is the outward unit normal vector to $\partial \Omega$, and $R'$ is the Jacobi matrix of $R$.
\end{proposition}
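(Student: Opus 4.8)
The plan is to prove this generalized Pohozaev identity by the classical domain-variation (inner variation) technique applied to the energy functional, exploiting the fact that $u$ is a weak solution and hence a critical point of $E$ with respect to all admissible variations, including those arising from reparametrizing the domain. The key idea is that a weak solution, being a critical point, is stationary not only under variations of the form $u + \varepsilon \varphi$ (which give the Euler--Lagrange equation) but also under \emph{inner variations} $u_\varepsilon(x) := u(x - \varepsilon R(x))$ obtained by flowing the independent variable along the vector field $R$. Setting the derivative of $E[u_\varepsilon]$ at $\varepsilon = 0$ to zero should produce exactly the bulk terms on the left-hand side of \eqref{Pohozaev}, while the boundary term on the right arises from the boundary contribution after integrating by parts and invoking the zero Dirichlet condition.

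\textbf{Step one.} I would introduce the one-parameter family of diffeomorphisms $\Psi_\varepsilon(x) = x + \varepsilon R(x)$ and the pulled-back functions $u_\varepsilon := u \circ \Psi_\varepsilon^{-1}$, and compute $E[u_\varepsilon]$ by the change of variables $y = \Psi_\varepsilon(x)$. Writing the Jacobian $\det D\Psi_\varepsilon = 1 + \varepsilon\, \mathrm{div}(R) + O(\varepsilon^2)$ and expanding $(D\Psi_\varepsilon)^{-1} = I - \varepsilon R' + O(\varepsilon^2)$, the gradient transforms as $\nabla_y u_\varepsilon = (D\Psi_\varepsilon)^{-T} \nabla_x u$, so the Dirichlet term becomes $\tfrac1p \int_\Omega |(D\Psi_\varepsilon)^{-T}\nabla u|^p \det D\Psi_\varepsilon \, dx$ and the potential term becomes $\int_\Omega F(u) \det D\Psi_\varepsilon\, dx$. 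Here I use the regularity $u \in C^{1,\beta}(\overline{\Omega})$ (Remark~\ref{rem:regularity}) to justify differentiating under the integral and handling $|\nabla u|^{p-2}$ near $p<2$.

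\textbf{Step two.} I would differentiate at $\varepsilon = 0$. The Jacobian factor contributes $\tfrac1p\int_\Omega |\nabla u|^p \mathrm{div}(R)\,dx - \int_\Omega F(u)\,\mathrm{div}(R)\,dx$, while differentiating the transformed gradient norm yields $-\int_\Omega |\nabla u|^{p-2}\langle \nabla u, \nabla u \cdot R'\rangle\,dx$ (the sign and the contraction $\nabla u \cdot R'$ come precisely from the $-\varepsilon R'$ term in the expansion of the inverse transpose, together with the chain rule $\tfrac{d}{d\varepsilon}\tfrac1p|\xi|^p = |\xi|^{p-2}\langle \xi, \dot\xi\rangle$). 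This reproduces the entire left-hand side of \eqref{Pohozaev}, so it remains to show that this quantity equals the claimed boundary integral rather than zero --- that is, that the inner variation is \emph{not} a null variation.

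\textbf{Step three, the main obstacle.} The subtle point is that $u_\varepsilon$ does not belong to $\wolp(\Omega)$ when $R$ does not vanish on $\partial\Omega$, since $\Psi_\varepsilon$ moves the boundary; hence $\tfrac{d}{d\varepsilon}E[u_\varepsilon]|_{\varepsilon=0}$ is \emph{not} forced to vanish by criticality and instead equals a boundary term. The cleanest rigorous route --- and the one matching the cited source \cite{degiovanni} --- is to derive \eqref{Pohozaev} directly by testing the equation with the vector field contraction $\langle \nabla u, R\rangle$ and integrating by parts. Concretely, I would multiply $-\Delta_p u = f(u)$ by $\langle R, \nabla u\rangle$, integrate over $\Omega$, and integrate by parts twice: the divergence-structure term $\int_\Omega \mathrm{div}(|\nabla u|^{p-2}\nabla u)\langle R,\nabla u\rangle\,dx$ splits into a boundary piece and a bulk piece, and using the pointwise Leibniz identity $\langle R, \nabla u\rangle \partial_{x_j}(|\nabla u|^{p-2}\partial_{x_j}u)$ rearranged via $\partial_{x_j}(|\nabla u|^{p}/p) = |\nabla u|^{p-2}\langle \nabla u, \partial_{x_j}\nabla u\rangle$ produces the $\mathrm{div}(R)$ and $R'$ terms. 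On the boundary, the Dirichlet condition $u=0$ forces $\nabla u = (\partial u/\partial n)\,n$ to be normal, so $\langle R,\nabla u\rangle = (\partial u/\partial n)\langle R, n\rangle$ and $|\nabla u| = |\partial u/\partial n|$; substituting collapses every boundary contribution into the single term $-\tfrac{p-1}{p}\int_{\partial\Omega}|\partial u/\partial n|^p\langle R,n\rangle\,d\sigma$. The delicate analytic points are justifying the boundary integrations by parts when $1<p<2$ (where $|\nabla u|^{p-2}$ may be singular at critical points of $u$, handled by the $C^{1,\beta}$ regularity away from $\partial\Omega$ and an approximation/truncation argument) and the reduction of all tangential derivatives on $\partial\Omega$ to purely normal ones via $u|_{\partial\Omega}=0$; since this is exactly the content of \cite[Lemma~2, p.~323]{degiovanni} specialized to $\mathcal{L}(x,s,\xi)=\tfrac1p|\xi|^p - F(s)$, I would invoke that result and merely verify the specialization of its general boundary expression to our Lagrangian.
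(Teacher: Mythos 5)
Your proposal is correct and ultimately takes the same route as the paper: the paper's entire proof of Proposition~\ref{prop:Pohozaev} consists of invoking \cite[Lemma~2, p.~323]{degiovanni} with $\mathcal{L}(x,s,\xi)=\tfrac{1}{p}|\xi|^p-F(s)$, which is exactly what your Step three does, including the correct collapse of the boundary terms via $\nabla u=(\partial u/\partial n)\,n$ on $\partial\Omega$. The inner-variation computation in Steps one and two is sound motivation (and correctly flags why criticality alone does not force the derivative to vanish), but it is not needed once the citation is in place.
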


Fix now a nontrivial $u_0 \in \wolp(\Omega)$ and let $u_t \in \wolp(\Omega_t)$ be a function defined as $u_t(y) := u_0(\Phi_t^{-1}(y))$, $y \in \Omega_t$.
Although assertions of the following two lemmas can be deduced from general results  \cite{sokolowski,henrotpiere,delfour}, we give sketches of their proofs for the sake of completeness. 
\begin{lemma}\label{lem:diffPhi}
	Let $\phi \in C^1(-\delta, \delta)$. Then $\int_{\Omega_t} F(\phi(t) u_t(y)) \,dy$ is differentiable with respect to $t \in (-\delta, \delta)$ and
	\begin{equation}\label{eq:diffPhi}
	\left. \frac{\partial}{\partial t} \int_{\Omega_t} F(\phi(t) u_t(y)) \,dy \right |_{t = 0} = \phi'(0) \int_{\Omega} u_0 f (\phi(0) u_0) \,dx + \int_{\Omega} F(\phi(0) u_0) \,{\rm div} (R) \,dx. 
	\end{equation}
\end{lemma}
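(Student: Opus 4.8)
The plan is to reduce the moving-domain integral to an integral over the fixed domain $\Omega$ by the change of variables $y = \Phi_t(x)$ and then to differentiate under the integral sign. First I would set $y = \Phi_t(x) = x + tR(x)$; since $\Phi_t$ is a diffeomorphism of $\overline\Omega$ onto $\overline{\Omega_t}$ for $|t| < \delta$, with Jacobi matrix $\Phi_t'(x) = I + tR'(x)$, the substitution together with $u_t(\Phi_t(x)) = u_0(x)$ gives
\begin{equation*}
\int_{\Omega_t} F(\phi(t) u_t(y)) \, dy = \int_{\Omega} F(\phi(t) u_0(x)) \, \det(I + tR'(x)) \, dx,
\end{equation*}
where I used the positivity of $\det(I + tR'(x))$ for $\delta$ small, so that the modulus of the Jacobian equals the determinant itself. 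The right-hand side is now an integral over the \emph{fixed} domain $\Omega$ whose integrand, call it $g(t,x)$, depends on $t$ in a manifestly smooth way: $F$ is $C^1(\mathbb{R})$ because $f \in C^{0,\gamma}_{\mathrm{loc}}(\mathbb{R})$ is continuous, $\phi \in C^1(-\delta,\delta)$, and $t \mapsto \det(I + tR'(x))$ is a polynomial in $t$.

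Next I would differentiate under the integral sign. Pointwise in $x$ the chain rule and $F' = f$ yield
\begin{equation*}
\frac{\partial g}{\partial t}(t,x) = \phi'(t) \, u_0(x) \, f(\phi(t) u_0(x)) \, \det(I + tR'(x)) + F(\phi(t) u_0(x)) \, \frac{\partial}{\partial t}\det(I + tR'(x)).
\end{equation*}
Evaluating at $t = 0$ and recalling Jacobi's formula $\left.\frac{\partial}{\partial t}\det(I + tR'(x))\right|_{t=0} = \operatorname{tr} R'(x) = \operatorname{div} R(x)$, together with $\det(I + 0\cdot R') = 1$, produces exactly the two terms on the right-hand side of \eqref{eq:diffPhi} after integration over $\Omega$.

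The step I expect to be the genuine obstacle is the justification of the interchange of $\partial_t$ and $\int_\Omega$, that is, the construction of a $t$-independent integrable majorant for $\partial_t g$. Here I would invoke the subcritical growth condition $(A_2)$ and the Sobolev embedding $\wolp(\Omega) \hookrightarrow L^q(\Omega)$, valid since $q < p^*$, which gives $u_0 \in L^q(\Omega)$. Fixing $\delta_0 \in (0,\delta)$, the quantities $|\phi(t)|$, $|\phi'(t)|$, $|\det(I + tR'(x))|$ and $|\partial_t \det(I + tR'(x))|$ are all bounded uniformly for $|t| \leq \delta_0$ and $x \in \overline\Omega$, because $\phi \in C^1$, $R \in C^1$, and $\overline\Omega$ is compact. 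Condition $(A_2)$ controls $|s f(s)|$ and $|F(s)|$ by $C(|s|^{q} + |s|)$; splitting according to whether $|\phi(t) u_0(x)|$ is large or small (on the bounded range $f$ is continuous, hence bounded) I would bound $|\partial_t g(t,x)|$ by $C'\bigl(|u_0(x)|^{q} + |u_0(x)|\bigr)$, which lies in $L^1(\Omega)$ since $\Omega$ is bounded and $u_0 \in L^q(\Omega)$. With this majorant the standard theorem on differentiation under the integral sign applies on $(-\delta_0,\delta_0)$, and since $\delta_0 < \delta$ was arbitrary the differentiability holds on all of $(-\delta,\delta)$, completing the proof.
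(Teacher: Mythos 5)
Your proof is correct and follows essentially the same route as the paper: pull the integral back to the fixed domain $\Omega$ via $y=\Phi_t(x)$, use Jacobi's formula for $\partial_t\det(I+tR')$, and differentiate. The only difference is that you explicitly justify differentiation under the integral sign with an $(A_2)$-based integrable majorant, a step the paper's sketch leaves implicit; this is a welcome addition, not a deviation.
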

\begin{proof}
	Changing variables by the rule $y = \Phi_t(x)$ and noting that $dy = \,{\rm det} \left ( \frac{ d \Phi_t}{dx} \right ) dx$ for $|t| < \delta$, we obtain that 
	\begin{equation*}
	\int_{\Omega_t} F(\phi(t) u_t(y)) \,dy = \int_{\Omega} F(\phi(t) u_t(\Phi_t(x))) \,{\rm det} \left( \frac{ d \Phi_t}{dx} \right) dx = \int_{\Omega} F(\phi(t) u_0) \,{\rm det} \left(I + t R' \right) dx,
	\end{equation*}
	where $R'$ is the Jacobi matrix of $R$. 
	This implies the differentiability of $\int_{\Omega_t} F(\phi(t) u_t(y)) \,dy$ on $(-\delta, \delta)$. 
	On the other hand, from Jacobi's formula we know that
	\begin{equation}\label{prop:diffPhi2}
	\left.\frac{\partial}{\partial t} \,{\rm det} \left(I + t R' \right) \right |_{t = 0} = {\rm Tr} (R') = {\rm div} (R).
	\end{equation}
	Thus, differentiating $\int_{\Omega_t} F(\phi(t) u_t(y)) \,dy$ by $t$ at zero, we derive \eqref{eq:diffPhi}.
\end{proof}

By the same arguments as in the proof of Lemma~\ref{lem:diffPhi} we get the following fact.
\begin{cor}\label{cor:difff}
	$\int_{\Omega_t} u_t(y) f(\alpha u_t(y)) \,dy$ is differentiable with respect to $t \in (-\delta, \delta)$ for any $\alpha \in \mathbb{R}$.
\end{cor}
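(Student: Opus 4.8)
The plan is to mimic the proof of Lemma~\ref{lem:diffPhi} almost verbatim, exploiting the fact that the change of variables $y = \Phi_t(x)$ transfers the integral onto the fixed domain $\Omega$, after which the $t$-dependence collapses to a polynomial. First I would change variables by $y = \Phi_t(x)$, using $dy = {\rm det}(d\Phi_t/dx)\,dx = {\rm det}(I + tR')\,dx$ together with the identity $u_t(\Phi_t(x)) = u_0(\Phi_t^{-1}(\Phi_t(x))) = u_0(x)$. Since $\alpha$ and $u_0$ do not depend on $t$, this yields
$$
\int_{\Omega_t} u_t(y)\, f(\alpha u_t(y))\,dy = \int_\Omega u_0(x)\, f(\alpha u_0(x))\,{\rm det}(I + tR')\,dx,
$$
in which the only occurrence of $t$ is through the Jacobian ${\rm det}(I + tR')$, a polynomial in $t$ whose coefficients are bounded on $\overline\Omega$ (as $R \in C^1$ and $\Omega$ is bounded).

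Next I would verify that the $x$-integrand is integrable and admits a $t$-uniform dominating function, which is the only point genuinely invoking the hypotheses. By $(A_2)$ one has $|u_0 f(\alpha u_0)| \leqslant C\,|u_0|\,(|\alpha u_0|^{q-1}+1)$, and since $\wolp(\Omega) \hookrightarrow L^q(\Omega)$ for $q < p^*$ and $\Omega$ is bounded, the right-hand side lies in $L^1(\Omega)$. Because $\partial_t\, {\rm det}(I + tR')$ is a polynomial in $t$ with bounded coefficients, for $|t| < \delta$ it is dominated by some constant $M$ uniformly on $\overline\Omega$; hence $|u_0 f(\alpha u_0)\,\partial_t\, {\rm det}(I + tR')| \leqslant M\,|u_0 f(\alpha u_0)| \in L^1(\Omega)$. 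This legitimizes differentiation under the integral sign on $(-\delta,\delta)$, proving the asserted differentiability and, via Jacobi's formula \eqref{prop:diffPhi2}, giving the value $\int_\Omega u_0 f(\alpha u_0)\,{\rm div}(R)\,dx$ at $t=0$ — precisely the first term on the right of \eqref{eq:diffPhi} with $\phi \equiv \alpha$.

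I do not anticipate a real obstacle here: the corollary is strictly easier than Lemma~\ref{lem:diffPhi}, since in the lemma the argument of $F$ carried an additional $t$-dependent factor $\phi(t)$, whereas now the argument $\alpha u_0$ is frozen and all $t$-dependence is confined to the smooth Jacobian. The only care required is the integrability estimate above, which is immediate from the subcritical growth condition $(A_2)$ and the Sobolev embedding.
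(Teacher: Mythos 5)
Your proposal is correct and takes essentially the same approach as the paper, whose entire proof is the remark that the corollary follows ``by the same arguments as in the proof of Lemma~\ref{lem:diffPhi}'': pull the integral back to the fixed domain $\Omega$ via $y = \Phi_t(x)$, observe that all $t$-dependence sits in the polynomial factor ${\rm det}(I+tR')$, and differentiate (your integrability check via $(A_2)$ and the Sobolev embedding is the right justification). One cosmetic slip in your closing remark: the value $\int_\Omega u_0 f(\alpha u_0)\,{\rm div}(R)\,dx$ at $t=0$ is the analogue of the \emph{second} (divergence) term on the right of \eqref{eq:diffPhi}, not the first, which vanishes when $\phi$ is constant since $\phi'(0)=0$; this does not affect the differentiability assertion, which is all the corollary claims.
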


\begin{lemma}\label{lem:diffnabla}
	Let $\phi \in C^1(-\delta, \delta)$. Then $\int_{\Omega_t} |\nabla (\phi(t) u_t(y))|^p \,dy$ is differentiable with respect to $t \in (-\delta, \delta)$ and
	\begin{align}
	\notag
	\left. \frac{\partial}{\partial t} \int_{\Omega_t} |\nabla (\phi(t) u_t(y))|^p \,dy \right|_{t = 0}
	&= p |\phi(0)|^{p-2} \phi(0) \phi'(0) \int_{\Omega} |\nabla u_0|^p \,dx \\
	\label{eq:diffnabla}
	+ 
	|\phi(0)|^p \int_{\Omega} |\nabla u_0|^p \,{\rm div} (R) \, dx 
	&- 
	p |\phi(0)|^p \int_{\Omega} |\nabla u_0|^{p-2} \left<\nabla u_0, \nabla u_0 \cdot R'\right> dx.
	\end{align}
\end{lemma}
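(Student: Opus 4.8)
The plan is to reduce the integral over the moving domain $\Omega_t$ to a fixed integral over $\Omega$ by the change of variables $y = \Phi_t(x)$, exactly as in the proof of Lemma~\ref{lem:diffPhi}, and then to differentiate under the integral sign. First I would record how the gradient transforms: writing $A_t := I + tR'$ for the Jacobi matrix of $\Phi_t$ and using the identity $u_t(\Phi_t(x)) = u_0(x)$, the chain rule gives $\nabla_y u_t(\Phi_t(x)) = (A_t^T)^{-1}\nabla u_0(x)$. Since $dy = \det(A_t)\,dx$ (cf.\ \eqref{prop:diffPhi2}), the change of variables yields
\begin{equation*}
\int_{\Omega_t} |\nabla(\phi(t)u_t(y))|^p\,dy = |\phi(t)|^p \int_\Omega \left|(A_t^T)^{-1}\nabla u_0\right|^p \det(A_t)\,dx.
\end{equation*}
For $|t|<\delta$ the matrix $A_t$ is invertible and depends smoothly on $t$, and since $p>1$ the map $\xi \mapsto |\xi|^p$ is $C^1$ on $\mathbb{R}^N$ (its gradient $p|\xi|^{p-2}\xi$ extends continuously by $0$ at the origin); hence the integrand is $C^1$ in $t$ for each fixed $x$, which will give differentiability on all of $(-\delta,\delta)$ once differentiation under the integral is justified.

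Next I would compute the $t$-derivative at $t=0$ by the product rule applied to the three $t$-dependent factors. The factor $|\phi(t)|^p$ contributes $p|\phi(0)|^{p-2}\phi(0)\phi'(0)$; the determinant satisfies $\det(A_0)=1$ and, by Jacobi's formula \eqref{prop:diffPhi2}, has derivative $\mathrm{div}(R)$. For the middle factor I set $\eta(t) := (A_t^T)^{-1}\nabla u_0$, note $\eta(0)=\nabla u_0$, and use $\tfrac{d}{dt}(A_t^T)^{-1}\big|_{t=0} = -(R')^T$ to get $\eta'(0) = -(R')^T\nabla u_0$. Then
\begin{equation*}
\left.\frac{d}{dt}\left|\eta(t)\right|^p\right|_{t=0} = p|\nabla u_0|^{p-2}\left<\nabla u_0, \eta'(0)\right> = -p|\nabla u_0|^{p-2}\left<\nabla u_0, \nabla u_0\cdot R'\right>,
\end{equation*}
where the last equality is the bookkeeping identity $\left<\nabla u_0,(R')^T\nabla u_0\right> = \left<\nabla u_0, \nabla u_0\cdot R'\right>$, matching the convention of \eqref{Pohozaev}. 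Collecting the three terms reproduces \eqref{eq:diffnabla} exactly.

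The one genuinely technical point — and the main obstacle — is justifying differentiation under the integral sign, since the $t$-derivative of the integrand contains a factor of order $|\nabla u_0|^{p-1}$ while $u_0$ is only in $\wlp(\Omega)$. I would handle this by the standard dominated-convergence criterion: on any compact subinterval of $(-\delta,\delta)$ the matrices $A_t^{-1}$ and $\partial_t A_t^{-1}$ and the scalars $\phi(t),\phi'(t),\det(A_t),\partial_t\det(A_t)$ are uniformly bounded, and $R'$ is bounded on the compact set $\overline{\Omega}$. Since $|(A_t^T)^{-1}\nabla u_0|^{p-2}\,|\langle(A_t^T)^{-1}\nabla u_0,\partial_t((A_t^T)^{-1}\nabla u_0)\rangle| \le C|\nabla u_0|^{p}$, the $t$-derivative of the integrand is pointwise dominated by $C|\nabla u_0|^p \in L^1(\Omega)$ uniformly in $t$. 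This legitimizes the exchange of derivative and integral at every $t\in(-\delta,\delta)$, yielding differentiability on the whole interval and, at $t=0$, formula \eqref{eq:diffnabla}.
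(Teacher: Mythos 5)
Your proof is correct and follows essentially the same route as the paper: pull the integral back to the fixed domain $\Omega$ via $y=\Phi_t(x)$, then differentiate at $t=0$ using Jacobi's formula for $\det(I+tR')$ and the derivative $-R'$ of the inverse Jacobi matrix (your transposed, column-vector bookkeeping is equivalent to the paper's row-vector convention $\nabla u_0\cdot(\Phi_t')^{-1}$). The only difference is that you supply the dominated-convergence justification for differentiating under the integral sign, which the paper, giving only a sketch, leaves implicit.
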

\begin{proof} 
	First, after the same change of variables as in the proof of Lemma~\ref{lem:diffPhi}, we obtain 
	\begin{equation}\label{eq:diffnabla1}
	\int_{\Omega_t} |\nabla u_t(y)|^p \,dy = \int_{\Omega_t} |\nabla u_0(\Phi_t^{-1}(y)) \cdot (\Phi_t^{-1}(y))'|^p \,dy = \int_{\Omega} |\nabla u_0 \cdot (\Phi_t')^{-1}|^p \,{\rm det} \left (I + t R' \right ) dx,
	\end{equation}
	where by $(\Phi^{-1}_t)'$ and $\Phi_t'$ we denote the corresponding Jacobi matrices and used the inversion property $(\Phi^{-1}_t(y))' = (\Phi_t'(x))^{-1}$. 
	Hence, \eqref{eq:diffnabla1} implies the differentiability of $\int_{\Omega_t} |\nabla u_t(y)|^p \,dy$ on $(-\delta, \delta)$.
	Note that the derivative of the inverse Jacobi matrix $(\Phi_t')^{-1}$ is given by 
	$$
	\left. \frac{\partial}{\partial t} (\Phi_t')^{-1} \right|_{t=0} = 
	- \left. (\Phi_t')^{-1} \cdot \frac{\partial \Phi_t'}{\partial t} \cdot (\Phi_t')^{-1} \right|_{t=0} 
	= - R'
	$$	
	since $\Phi'_t = I$ for $t=0$.
	Hence, differentiating \eqref{eq:diffnabla1} by $t$ at zero and taking into account \eqref{prop:diffPhi2}, we obtain
	\begin{equation*}
	\left . \frac{\partial}{\partial t} \int_{\Omega_t} |\nabla u_t(y)|^p \,dy \right |_{t = 0} = \int_{\Omega} |\nabla u_0|^p \,{\rm div} (R) \,dx - p \int_{\Omega} |\nabla u_0|^{p-2} \left<\nabla u_0, \nabla u_0 \cdot R'\right> \,dx.
	\end{equation*} 
	Finally, noting that $\nabla (\phi(t) u_t) = \phi(t) \nabla u_t$, we arrive at \eqref{eq:diffnabla}.
\end{proof}

Recall the definition \eqref{mu:Nehari} of the least nontrivial critical levels of $E$ in perturbed domains $\Omega_t$:
\begin{equation*}
\mu_+(\Omega_t) = \min_{v \in \mathcal{N}(\Omega_t), \, v \geqslant 0} E[v] 
\quad \text{and} \quad 
\mu_-(\Omega_t) = \min_{v \in \mathcal{N}(\Omega_t), \, v \leqslant 0} E[v].
\end{equation*}
From Appendix~\ref{section:appendix} (see Lemma~\ref{lem:existence} and Remark~\ref{rem:Omega_t}) we know that $\delta > 0$ can be chosen sufficiently small such that $\mu_+(\Omega_t)$ and $\mu_-(\Omega_t)$ possess minimizers for any $|t| < \delta$ which are constant-sign $C^{1,\beta}(\overline{\Omega_t})$-solutions of \eqref{D} in $\Omega_t$. 

Below in this section, we always denote by $v_0$ an arbitrary minimizer of $\mu_+(\Omega)$, that is, $v_0 \in \mathcal{N}(\Omega)$, $v_0 \geqslant 0$ in $\Omega$, and $E[v_0] = \mu_+(\Omega)$.
As above, consider the family of nonnegative functions  $v_t(y) := v_0(\Phi_t^{-1}(y))$, where $y \in \Omega_t$ and $|t| < \delta$.
We do not know that $v_t \in \mathcal{N}(\Omega_t)$. However, for each $|t| < \delta$ Lemma~\ref{lem:superlin} yields the existence of a unique $\alpha_t = \alpha(v_t)$ such that $\alpha_t > 0$ and  $\alpha_t v_t \in \mathcal{N}(\Omega_t)$. 

\begin{lemma}\label{lem:differ}
	$\alpha_t \in C^1(-\delta, \delta)$ and $\alpha_0 = 1$.
\end{lemma}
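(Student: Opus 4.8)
The plan is to realize $\alpha_t$ as the solution of an implicit equation and then apply the implicit function theorem. Recall that $\alpha_t$ is defined as the unique positive number with $\alpha_t v_t \in \mathcal{N}(\Omega_t)$, i.e.\ $E'[\alpha_t v_t](\alpha_t v_t) = 0$. Writing this out, I would set
\begin{equation*}
G(t, \alpha) := \alpha^p \int_{\Omega_t} |\nabla v_t|^p \, dy - \int_{\Omega_t} \alpha \, v_t \, f(\alpha v_t) \, dy,
\end{equation*}
so that the Nehari condition reads $G(t, \alpha_t) = 0$ for $\alpha_t > 0$. Since $v_0 \in \mathcal{N}(\Omega)$ by assumption, we have $G(0,1) = 0$, which immediately gives $\alpha_0 = 1$ once uniqueness is invoked.

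To apply the implicit function theorem I need $G$ to be $C^1$ in both variables near $(0,1)$ and $\partial G/\partial \alpha \neq 0$ there. The $t$-regularity is exactly where the two previously established lemmas enter: by Lemma~\ref{lem:diffnabla} (applied with $\phi \equiv \alpha$ constant, or after factoring out the $\alpha^p$) the gradient term $\int_{\Omega_t} |\nabla v_t|^p \, dy$ is differentiable in $t$, and by Corollary~\ref{cor:difff} the reaction term $\int_{\Omega_t} v_t \, f(\alpha v_t) \, dy$ is differentiable in $t$ for each fixed $\alpha$. The $\alpha$-regularity is elementary since $\alpha$ appears polynomially in the first term and through the composition $s \mapsto s f(s)$, which is $C^1$ away from the origin by $(A_1)$, in the second; one checks the joint continuity of the partial derivatives by the same change-of-variables representation used in the proofs of those lemmas, so that $G \in C^1$ on a neighborhood of $(0,1)$.

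The key nondegeneracy computation is
\begin{equation*}
\frac{\partial G}{\partial \alpha}(t,\alpha) = p\,\alpha^{p-1} \int_{\Omega_t} |\nabla v_t|^p \, dy - \int_{\Omega_t} \bigl( v_t \, f(\alpha v_t) + \alpha \, v_t^2 \, f'(\alpha v_t) \bigr) \, dy.
\end{equation*}
Evaluating at $(0,1)$ and using that $v_0 \in \mathcal{N}(\Omega)$ (so the first term equals $p \int_\Omega v_0 f(v_0)\,dx$), this reduces to
\begin{equation*}
\frac{\partial G}{\partial \alpha}(0,1) = (p-1) \int_{\Omega} v_0 \, f(v_0) \, dx - \int_{\Omega} v_0^2 \, f'(v_0) \, dx = \int_\Omega v_0^2 \Bigl( (p-1)\frac{f(v_0)}{v_0} - f'(v_0) \Bigr) dx,
\end{equation*}
and assumption $(A_3)$, namely $f'(s) > (p-1) f(s)/s$, forces the integrand to be strictly negative wherever $v_0 \neq 0$. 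Hence $\partial G/\partial \alpha(0,1) < 0 \neq 0$, and the implicit function theorem yields a $C^1$ function $t \mapsto \alpha_t$ near $t=0$ solving $G(t,\alpha_t)=0$ with $\alpha_0 = 1$; uniqueness from Lemma~\ref{lem:superlin} guarantees this coincides with the $\alpha_t$ already defined, and since the argument applies at every base point $t \in (-\delta,\delta)$ (the sign condition $(A_3)$ holds identically), one obtains $\alpha_t \in C^1(-\delta,\delta)$ globally.

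The main obstacle I anticipate is not the nondegeneracy (which is a clean consequence of $(A_3)$) but verifying the joint $C^1$ regularity of $G$ rigorously, in particular the continuity of the mixed behavior of the reaction term: one must confirm that differentiation under the integral and the change of variables $y = \Phi_t(x)$ interact well, so that $\partial G/\partial \alpha$ and $\partial G/\partial t$ are jointly continuous near $(0,1)$. This requires the growth control from $(A_2)$ to dominate the integrands and their $\alpha$-derivatives uniformly for $(t,\alpha)$ in a compact neighborhood — together with the local $C^{0,\gamma}$ and $C^1$ structure of $f$ from $(A_1)$ — but it is routine in spirit and parallels the estimates already carried out in Lemmas~\ref{lem:diffPhi} and \ref{lem:diffnabla}.
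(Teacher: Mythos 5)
Your proposal is correct and follows essentially the same route as the paper: both realize $\alpha_t$ via the implicit function theorem applied to the Nehari constraint (your $G(t,\alpha)$ is just $\alpha$ times the paper's $\Psi(\alpha,t)$, so the nondegeneracy computations coincide at $(0,1)$), with $t$-differentiability supplied by Lemma~\ref{lem:diffnabla} and Corollary~\ref{cor:difff} and the strict sign of $\partial G/\partial\alpha$ supplied by the first part of $(A_3)$. The only minor divergence is that you obtain $C^1$ regularity on all of $(-\delta,\delta)$ by invoking the theorem at every base point and gluing via the uniqueness in Lemma~\ref{lem:superlin}, whereas the paper applies it once at $(1,0)$ and shrinks $\delta$ if necessary; both are legitimate.
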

\begin{proof}
	Define $\Psi: (0,+\infty)\times(-\delta,\delta) \to \mathbb{R}$ by 
	$$
	\Psi(\alpha,t) = \alpha^{p-1} \int_{\Omega_t} |\nabla v_t|^p \, dy - \int_{\Omega_t} v_t f(\alpha v_t) \, dy.
	$$
	From Lemma~\ref{lem:diffnabla} and Corollary~\ref{cor:difff} we see that $\Psi(\alpha, \cdot)$ is differentiable on $(-\delta, \delta)$ for any $\alpha > 0$.
	On the other hand, we know that $v_0 > 0$ in $\Omega$ (see Remark~\ref{rem:maximum}) and hence $v_t > 0$ in $\Omega_t$. Thus, from $(A_1)$ it follows that $v_t f(\alpha v_t)$ is differentiable with respect to $\alpha > 0$ for each $x \in \Omega$ and $|t|<\delta$. Therefore, using $(A_2)$, we see that $\Psi(\cdot,t) \in C^1(0,+\infty)$ for any $|t|<\delta$.
	
	Since $v_0 \in \mathcal{N}(\Omega)$, we have $\Psi(1,0) = 0$. Moreover, in view of the first part of $(A_3)$ we have
	$$
	\Psi_\alpha'(1,0) = (p-1)\int_{\Omega} |\nabla v_0|^p \, dx - \int_{\Omega} v_0^2 f'(v_0) \, dx 
	= \int_{\Omega} v_0^2 \left( (p-1)\frac{f(v_0)}{v_0} - f'(v_0)\right) dx < 0.
	$$
	Hence, taking $\delta > 0$ smaller (if necessary), the implicit function theorem assures the existence of a differentiable function $\alpha_t: (-\delta,\delta) \to (0,+\infty)$ such that $\alpha_0 = 1$ and $\Psi(\alpha_t,t) = 0$ for all $t \in (-\delta,\delta)$, that is, $\alpha_t v_t \in \mathcal{N}(\Omega_t)$.
\end{proof}
\begin{remark}
	Consider any minimizer $w_0$ of $\mu_-(\Omega)$ and its deformation $w_t = w_0(\Phi_t^{-1}(y))$, $y \in \Omega_t$. Then the result of Lemma~\ref{lem:differ} remains valid for $\alpha_t  = \alpha(w_t)$ such that $\alpha_t w_t \in \mathcal{N}(\Omega_t)$.
\end{remark}

Now we are ready to prove Theorem~\ref{thm:1}. We give the proof of each statement separately.
\begin{proposition}\label{prop:differentiable}
	$E[\alpha_t v_t]$ is differentiable with respect to $t \in (-\delta, \delta)$ and
	\begin{equation}\label{mu:Neh0}
	\left.\frac{\partial E[\alpha_t v_t]}{\partial t}\right|_{t=0} = - \frac {p-1} p \int_{\partial \Omega} \left| \frac{\partial v_0}{\partial n} \right|^p \left<R, n\right> \,d\sigma.
	\end{equation}
\end{proposition}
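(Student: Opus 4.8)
The plan is to reduce everything to the two differentiation lemmas already at our disposal together with the Pohozaev identity of Proposition~\ref{prop:Pohozaev}. I would first write the energy along the deformed family as
\begin{equation*}
E[\alpha_t v_t] = \frac{1}{p}\int_{\Omega_t}|\nabla(\alpha_t v_t)|^p\,dy - \int_{\Omega_t}F(\alpha_t v_t)\,dy,
\end{equation*}
and apply Lemmas~\ref{lem:diffnabla} and \ref{lem:diffPhi} with the particular choice $\phi(t) = \alpha_t$. This choice is admissible precisely because Lemma~\ref{lem:differ} guarantees $\alpha_t \in C^1(-\delta,\delta)$ with $\alpha_0 = 1$, so both lemmas apply and yield at once the differentiability of $E[\alpha_t v_t]$ on $(-\delta,\delta)$.

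Next I would evaluate the derivative at $t=0$. Substituting $\phi(0)=1$ and $\phi'(0)=\alpha_0'$ into \eqref{eq:diffnabla} and \eqref{eq:diffPhi} and combining the two contributions via $E = \frac1p(\cdot) - (\cdot)$, I expect to arrive at
\begin{align*}
\left.\frac{\partial E[\alpha_t v_t]}{\partial t}\right|_{t=0}
&= \alpha_0'\left(\int_\Omega|\nabla v_0|^p\,dx - \int_\Omega v_0 f(v_0)\,dx\right) \\
&\quad + \frac{1}{p}\int_\Omega|\nabla v_0|^p\,{\rm div}(R)\,dx - \int_\Omega|\nabla v_0|^{p-2}\left<\nabla v_0,\nabla v_0\cdot R'\right>\,dx - \int_\Omega F(v_0)\,{\rm div}(R)\,dx.
\end{align*}
The key observation is that $v_0 \in \mathcal{N}(\Omega)$, so by the very definition of the Nehari manifold the bracket multiplying $\alpha_0'$ vanishes. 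Consequently the whole dependence on the (otherwise unknown) scaling derivative $\alpha_0'$ drops out, and there is no need to compute $\alpha_0'$ explicitly from the implicit relation of Lemma~\ref{lem:differ}.

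Finally, the three surviving integrals are exactly the left-hand side of \eqref{Pohozaev}. Since $v_0$ is a least energy constant-sign solution, hence a weak solution of \eqref{D}, Proposition~\ref{prop:Pohozaev} applies and converts this bulk expression into the boundary integral $-\frac{p-1}{p}\int_{\partial\Omega}\left|\frac{\partial v_0}{\partial n}\right|^p\left<R,n\right>\,d\sigma$, which is precisely \eqref{mu:Neh0}. The main point here is conceptual rather than computational: it is the Nehari-induced cancellation of the $\alpha_0'$ term that makes the derivative depend on $v_0$ only through the Pohozaev boundary term, and this also explains the phenomenon noted in the remark after Theorem~\ref{thm:1}, namely that $f$ does not appear explicitly in the Hadamard-type formula. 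The remaining steps are entirely routine given the preparatory lemmas.
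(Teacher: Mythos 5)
Your proposal is correct and follows essentially the same route as the paper's proof: differentiability via Lemmas~\ref{lem:differ}, \ref{lem:diffPhi} and \ref{lem:diffnabla} with $\phi(t)=\alpha_t$, cancellation of the $\alpha_0'$ terms through the Nehari constraint $v_0\in\mathcal{N}(\Omega)$, and conversion of the remaining bulk integrals into the boundary term via the Pohozaev identity of Proposition~\ref{prop:Pohozaev}. Your closing observation about the Nehari-induced cancellation explaining the absence of $f$ in the Hadamard-type formula is a nice conceptual addition, but the argument itself is the paper's argument.
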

\begin{proof}
First, $E[\alpha_t v_t]$ is differentiable due to Lemmas~\ref{lem:differ}, \ref{lem:diffPhi} and \ref{lem:diffnabla}. 
Moreover, applying equalities~\eqref{eq:diffPhi} and \eqref{eq:diffnabla} with $\phi(t) = \alpha_t$ and recalling that $\alpha_0 = 1$, we compute
\begin{align*}
\left.\frac{\partial E[\alpha_t v_t]}{\partial t}\right|_{t=0}
&= \alpha_0' \int_{\Omega} |\nabla v_0|^p \,dx + \frac 1 p \int_{\Omega} |\nabla v_0|^p \,{\rm div} (R) \,dx - \int_{\Omega} |\nabla v_0|^{p-2} \left<\nabla v_0, \nabla v_0 \cdot R' \right> dx \\ 
&- \alpha_0' \int_{\Omega} v_0 f (v_0) \,dx - \int_{\Omega} F(v_0) \,{\rm div} (R) \,dx.
\end{align*}
Since $v_0 \in \mathcal{N}(\Omega)$, the terms containing $\alpha_0'$ cancel out and we arrive at
\begin{equation*}
\left.\frac{\partial E[\alpha_t v_t]}{\partial t}\right|_{t=0} = \frac 1 p \int_{\Omega} |\nabla v_0|^p \,{\rm div} (R) \,dx - \int_{\Omega} |\nabla v_0|^{p-2} \left<\nabla v_0, \nabla v_0 \cdot R' \right> \,dx - \int_{\Omega} F(v_0) \,{\rm div} (R) \,dx.
\end{equation*}
Finally, applying the Pohozaev identity \eqref{Pohozaev}, we derive \eqref{mu:Neh0}.
\end{proof}

\begin{remark}
	Consider any minimizer $w_0$ of $\mu_-(\Omega)$ and its deformation $w_t = w_0(\Phi_t^{-1}(y))$, $y \in \Omega_t$. 
	Arguing as in Proposition~\ref{prop:differentiable}, we see that $E[\alpha(w_t) w_t]$ is also differentiable with respect to $t \in (-\delta, \delta)$ and satisfies the Hadamard-type formula~\eqref{mu:Neh2}.
\end{remark}

\begin{proposition}\label{proof:contin}
	$\mu_+(\Omega_t)$ and $\mu_-(\Omega_t)$ are continuous at $t=0$.
\end{proposition}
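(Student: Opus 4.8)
The plan is to prove continuity at $t=0$ by sandwiching $\mu_+(\Omega_t)$ between an upper and a lower bound, both converging to $\mu_+(\Omega)$; the argument for $\mu_-$ is verbatim the same with $w_0,w_t$ in place of $v_0,v_t$, so I only treat $\mu_+$. For clarity I write $E_\Omega$ and $E_{\Omega_t}$ for the energy computed over $\Omega$ and $\Omega_t$.

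\emph{Upper semicontinuity.} The function $\alpha_t v_t$ is nonnegative and lies in $\mathcal N(\Omega_t)$, hence it is admissible for $\mu_+(\Omega_t)$ and $\mu_+(\Omega_t)\le E_{\Omega_t}[\alpha_t v_t]$ for all $|t|<\delta$. By Proposition~\ref{prop:differentiable} the map $t\mapsto E_{\Omega_t}[\alpha_t v_t]$ is differentiable, hence continuous, at $t=0$, and its value there is $E_\Omega[\alpha_0 v_0]=E_\Omega[v_0]=\mu_+(\Omega)$ since $\alpha_0=1$ by Lemma~\ref{lem:differ}. Therefore $\limsup_{t\to0}\mu_+(\Omega_t)\le\mu_+(\Omega)$.

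\emph{Lower semicontinuity.} For the reverse inequality I would transport minimizers on $\Omega_t$ back to $\Omega$. For each $|t|<\delta$ let $\tilde v_t$ be a minimizer of $\mu_+(\Omega_t)$, which exists and is a nonnegative $C^{1,\beta}$-solution by Lemma~\ref{lem:existence} and Remark~\ref{rem:Omega_t}, and set $\hat v_t(x):=\tilde v_t(\Phi_t(x))$, a nonnegative function in $\wolp(\Omega)$. Repeating the change-of-variables computations underlying Lemmas~\ref{lem:diffPhi} and \ref{lem:diffnabla}, but at fixed $t$ rather than differentiating, each of the quantities $\int_\Omega|\nabla\hat v_t|^p$, $\int_\Omega\hat v_t f(\hat v_t)$ and $\int_\Omega F(\hat v_t)$ equals the corresponding integral of $\tilde v_t$ over $\Omega_t$ weighted by $\det((\Phi_t^{-1})')$ and $(\Phi_t')^{-1}$, which converge uniformly to the identity as $t\to0$. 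Provided the family $\{\tilde v_t\}$ is bounded in $\wolp(\Omega_t)$, this gives $E_\Omega[\hat v_t]-E_{\Omega_t}[\tilde v_t]\to0$ and a vanishing Nehari defect $\int_\Omega|\nabla\hat v_t|^p-\int_\Omega\hat v_t f(\hat v_t)\to0$. The required boundedness comes from $(A_4)$: on the Nehari manifold $E=\int(\tfrac1p\,uf(u)-F(u))\ge(\tfrac1p-\tfrac1\theta)\|\nabla\tilde v_t\|_p^p-C$, so the upper bound on $\mu_+(\Omega_t)$ from the previous step forces a uniform bound on $\|\nabla\tilde v_t\|_{L^p(\Omega_t)}$, while a uniform lower bound away from zero follows from $\mu_\pm>0$ and the standard fact (contained in Lemmas~\ref{lem:existence} and \ref{lem:superlin}) that Nehari elements have norm bounded below under $(A_2)$–$(A_3)$. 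Finally, Lemma~\ref{lem:superlin} gives $\beta_t>0$ with $\beta_t\hat v_t\in\mathcal N(\Omega)$; the vanishing Nehari defect together with the strict monotonicity and uniqueness in Lemma~\ref{lem:superlin} forces $\beta_t\to1$, and subcriticality $(A_2)$ makes the maps $\beta\mapsto E_\Omega[\beta\hat v_t]$ equicontinuous near $\beta=1$, so $E_\Omega[\beta_t\hat v_t]-E_\Omega[\hat v_t]\to0$. Hence $\mu_+(\Omega)\le E_\Omega[\beta_t\hat v_t]=\mu_+(\Omega_t)+o(1)$, i.e.\ $\mu_+(\Omega)\le\liminf_{t\to0}\mu_+(\Omega_t)$, and combining with the upper bound yields continuity.

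The main obstacle I anticipate is uniformity in $t$. The back-transported minimizers $\tilde v_t$ are only known to be bounded, not convergent, so every error estimate from the Jacobian weights, as well as the convergence $\beta_t\to1$ of the projection constant, must be controlled uniformly over this moving family. This is precisely where the three structural assumptions are used: $(A_4)$ for coercivity on the Nehari manifold (uniform upper bound), $\mu_\pm>0$ together with $(A_2)$–$(A_3)$ for the uniform lower bound on the norm, and the subcritical growth $(A_2)$ to keep the nonlinear terms $\int\hat v_t f(\beta\hat v_t)$ and the Nehari projection stable. Without these, neither the uniform boundedness nor the stability $\beta_t\to1$ would be clear, and the two one-sided estimates could fail to close up.
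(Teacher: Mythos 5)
Your upper-semicontinuity step coincides with the paper's (admissibility of $\alpha_t v_t$ plus continuity of $t\mapsto E[\alpha_t v_t]$ from Proposition~\ref{prop:differentiable}). For lower semicontinuity you take a genuinely different route --- pulling the minimizers $\tilde v_t$ back to the fixed domain $\Omega$ and stabilizing the Nehari projection, instead of the paper's extension-by-zero and extraction of a weak limit --- but this route has a genuine gap at its decisive step: the claim that the vanishing Nehari defect ``together with the strict monotonicity and uniqueness in Lemma~\ref{lem:superlin} forces $\beta_t\to1$''. Monotonicity and uniqueness of the projection are statements about each \emph{fixed} function; they give no uniform modulus along the moving, non-compact family $\{\hat v_t\}$. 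Concretely, the projection constant $\beta_t$ satisfies
\begin{equation*}
\int_\Omega \hat v_t^{\,p}\left[\frac{f(\beta_t\hat v_t)}{(\beta_t\hat v_t)^{p-1}}-\frac{f(\hat v_t)}{\hat v_t^{\,p-1}}\right]dx
\;=\;\int_\Omega|\nabla\hat v_t|^p\,dx-\int_\Omega\hat v_t f(\hat v_t)\,dx\;\longrightarrow\;0,
\end{equation*}
and if $\beta_t\geqslant 1+\epsilon$ along a subsequence, then by $(A_3)$ the left-hand side is bounded below by $\delta(a,b,\epsilon)\,a^p\,\big|\{a\leqslant\hat v_t\leqslant b\}\big|$ for every fixed $0<a<b$, with $\delta>0$. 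Hence the defect can vanish while $\beta_t$ stays away from $1$, \emph{provided} $|\{a\leqslant\hat v_t\leqslant b\}|\to0$ for all $a,b$, i.e.\ provided the family vanishes in measure. Ruling out this degeneration is precisely the missing argument, and it does not follow from anything in Lemma~\ref{lem:superlin}: one must combine the uniform lower bound $\|\nabla\hat v_t\|_{L^p(\Omega)}\geqslant c>0$ with a uniform $L^r$ bound ($r>q$, from the Sobolev embedding), Vitali's theorem, and the growth bound $|f(s)|\leqslant\mu|s|^{p-1}+C|s|^{q-1}$ to see that vanishing in measure would force $\int_\Omega\hat v_t f(\hat v_t)\,dx\to0$ and hence $\|\nabla\hat v_t\|_{L^p(\Omega)}\to0$, a contradiction. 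You flag this uniformity issue yourself as the ``main obstacle'', but you never resolve it, and the justification you offer in its place cannot do the job.

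Once this vanishing-exclusion argument is inserted, your scheme does close (alternatively, it suffices to prove mere boundedness of $\beta_t$ --- using $(A_4)$ and the same non-vanishing estimate --- and then compare $E_\Omega[\beta_t\hat v_t]$ with $E_{\Omega_t}[\beta_t\tilde v_t]\leqslant\mu_+(\Omega_t)$ via the Jacobian error bounds, which you already control uniformly for $\beta$ in bounded sets). Your approach then has the merit of never leaving $\wolp(\Omega)$, so it avoids the paper's appeal to boundary regularity (\cite{admasfournier}) to place the limit function in $\wolp(\Omega)$. The paper itself sidesteps any quantitative stability of projection constants: it extends the minimizers $u_k$ of $\mu_+(\Omega_{t_k})$ by zero to a fixed large domain, extracts a weak limit $u\geqslant0$, proves $u\not\equiv0$ by the same kind of non-vanishing estimate described above, and concludes with weak lower semicontinuity together with the maximum property $E[\alpha(u)u_k]\leqslant E[u_k]$. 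In short: your strategy is salvageable and genuinely different, but as written the key step $\beta_t\to1$ is unjustified.
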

\begin{proof}
	We give the proof for $\mu_+(\Omega_t)$ only. The case of $\mu_-(\Omega_t)$ can be handled in much the same way. 
	Let us show that 
	$$
	\limsup\limits_{t \to 0} \mu_+(\Omega_t) \leqslant \mu_+(\Omega) \leqslant \liminf\limits_{t \to 0} \mu_+(\Omega_t).
	$$
	Suppose, by contradiction, that the first inequality does not hold. Consider a minimizer $v_0$ of $\mu_+(\Omega)$ and its deformation $v_t(y) := v_0(\Phi_t^{-1}(y))$, $y \in \Omega_t$. We know that  $E[\alpha(v_t) v_t] \geqslant \mu_+(\Omega_t)$. Moreover, $E[\alpha(v_t) v_t]$ is continuous with respect to $t \in (-\delta, \delta)$, see Proposition~\ref{prop:differentiable}. Therefore, we get 
	$$
	\limsup\limits_{t \to 0} E[\alpha(v_t) v_t] \geqslant \limsup\limits_{t \to 0} \mu_+(\Omega_t) > \mu_+(\Omega) = \limsup\limits_{t \to 0} E[\alpha(v_t) v_t],
	$$
	which is impossible.
	
	Suppose now, contrary to our claim, that $\mu_+(\Omega) > \liminf\limits_{t \to 0} \mu_+(\Omega_t)$. 
	Let $\{t_k\}_{k \in \mathbb{N}}$ be a sequence such that $\mu_+(\Omega) > \lim\limits_{k \to +\infty} \mu_+(\Omega_{t_k})$, and let $u_k \in \wolp(\Omega_{t_k})$ be a minimizer of $\mu_+(\Omega_{t_k})$, $k \in \mathbb{N}$. 
	We want to show that $\{u_k\}_{k \in \mathbb{N}}$ converges, up to a subsequence, to a minimizer of $\mu_+(\Omega)$. This will give us a contradiction. 
	Consider a smooth (nonempty) domain $\hat{\Omega} \subset \bigcap_{k \in \mathbb{N}} \Omega_{t_k}$. Extending each element of $\wolp(\hat{\Omega})$ outside of $\hat{\Omega}$ by zero, we see that $\mathcal{N}(\hat{\Omega}) \subset \mathcal{N}(\Omega_{t_k})$ for any $k \in \mathbb{N}$. 	
	Taking any $\xi \in C_0^\infty(\hat{\Omega})$, we apply Lemma~\ref{lem:superlin} to find an appropriate multiplier $\alpha > 0$ such that $\alpha \xi \in \mathcal{N}(\hat{\Omega})$, and hence $\mu_+(\Omega_{t_k}) \leqslant E[\alpha \xi]$ for any $k \in \mathbb{N}$. This implies that all $\|\nabla u_k\|_{L^p(\Omega_{t_k})}$ are uniformly bounded from above. Indeed, using $(A_2)$, $(A_4)$, and the first part of $(A_3)$, we get 
	\begin{align*}
	&0 < \int_{\Omega_{t_k}} F(u_k) \, dx \leqslant C_1 +  \frac{1}{\theta} \int_{\{x \in \Omega_{t_k}:\, u_k(x) > s_0\}} u_k f(u_k) \, dx 
	\leqslant C_1 +  \frac{1}{\theta} \int_{\Omega_{t_k}} u_k f(u_k) \, dx, 
	\end{align*}	
	where $C_1 > 0$ is chosen sufficiently large to be independent of $k$.
	Therefore, supposing that $\|\nabla u_k\|_{L^p(\Omega_{t_k})} \to +\infty$ as $k \to +\infty$ and recalling that $u_k \in \mathcal{N}(\Omega_{t_k})$, we obtain a contradiction:
	\begin{align*}
	\notag
	\mu_+(\Omega_{t_k})
	&= \frac{1}{p} \int_{\Omega_{t_k}} |\nabla u_k|^p \, dx - \int_{\Omega_{t_k}} F(u_k) \, dx\\
	&\geqslant \frac{1}{p} \int_{\Omega_{t_k}} |\nabla u_k|^p \, dx - \frac{1}{\theta} \int_{\Omega_{t_k}} u_k f(u_k) \, dx - C_1 =
	\left(\frac{1}{p}-\frac{1}{\theta}\right) \int_{\Omega_{t_k}} |\nabla u_k|^p \, dx - C_1 \to +\infty,
	\end{align*}
	since $\theta > p$.
		
	Consider now a bounded domain $\widetilde{\Omega} \supset \bigcup_{k \in \mathbb{N}} \Omega_{t_k}$.
	Extending each $u_k$ by zero outside of $\Omega_{t_k}$, we get $u_k \in \wolp(\widetilde{\Omega})$ and $\|\nabla u_k\|_{L^p(\widetilde{\Omega})} = \|\nabla u_k\|_{L^p(\Omega_{t_k})}$ for all $k \in \mathbb{N}$. 
	Therefore, the boundedness of $\{u_k\}_{k \in \mathbb{N}}$ in $\wolp(\widetilde{\Omega})$ implies the existence of $u \in \wolp(\widetilde{\Omega})$ such that, up to a subsequence, $u_k \to u$ weakly in $\wolp(\widetilde{\Omega})$ and strongly in $L^{q}(\widetilde{\Omega})$, $q \in (p, p^*)$. Moreover, since $u_k \geqslant 0$ in $\Omega_{t_k}$ for all $k \in \mathbb{N}$, we get $u \geqslant 0$ a.e.\ in $\widetilde{\Omega}$.
	
	In Remark~\ref{rem:Omega_t} below we show that the second part of $(A_3)$ yields $\limsup\limits_{s \to 0} \dfrac{f(s)}{|s|^{p-2}s} < \widetilde{C} < \lambda_p(\Omega_{t_k})$ for some $\widetilde{C} > 0$ and all $k$ large enough.
	Thus, due to the previous inequality and $(A_2)$, we can find $\mu \in (0, \widetilde{C})$ and $C_2>0$ such that $|f(s)| \leqslant \mu |s|^{p-1} + C_2 |s|^{q-1}$ for all $s \in \mathbb{R}$, where $q \in (p, p^*)$.
	Therefore, we get	
	$$
	\int_{\Omega_{t_k}} |\nabla u_k|^p \, dx = \int_{\Omega_{t_k}} u_k f(u_k) \, dx \leqslant \frac{\mu}{\lambda_p(\Omega_{t_k})} \int_{\Omega_{t_k}} |\nabla u_k|^p \, dx + C_3 \left(\int_{\Omega_{t_k}} |\nabla u_k|^p \, dx \right)^\frac{q}{p}
	$$
	for some $C_3 > 0$. 
	If we suppose that $\|\nabla u_k\|_{L^p(\Omega_{t_k})} \to 0$ as $k \to +\infty$, then for sufficiently large $k$ we obtain a contradiction since $\mu < \widehat{C} < \lambda_p(\Omega_{t_k})$ and $q>p$.
	Thus, there exists $C_4>0$ such that $\|\nabla u_k\|_{L^p(\Omega_{t_k})} > C_4$ for any $k$ large enough. This implies that $\int_{\Omega_{t_k}} u_k f(u_k) \, dx > C_4$ and hence $u \not\equiv 0$ a.e.\ in $\widetilde{\Omega}$. 
	
	Applying the fundamental lemma of calculus of variations, it is not hard to see that $u \equiv 0$ a.e.\ in $\mathbb{R}^N \setminus \Omega$. Since $\partial \Omega \in C^{2,\gamma}$, we conclude that $u \in \wolp(\Omega)$ (see, e.g., \cite[Theorem 5.29]{admasfournier}), and hence by the weak convergence we have 
	\begin{equation*}
	\|\nabla u\|_{L^p(\Omega)} = \|\nabla u\|_{L^p(\widetilde{\Omega})} 
	\leqslant 
	\liminf\limits_{k \to +\infty} \|\nabla u_k\|_{L^p(\widetilde{\Omega})} 
	= 
	\liminf\limits_{k \to +\infty} \|\nabla u_k\|_{L^p(\Omega_{t_k})}.
	\end{equation*} 
	Further, Lemma~\ref{lem:superlin} implies the existence of $\alpha(u) > 0$ such that $\alpha(u) u \in \mathcal{N}(\Omega)$. 
	Moreover, $E[\alpha u]$ achieves its unique maximum with respect to $\alpha>0$ at $\alpha(u)$. 
	On the other hand, since each $u_k \in \mathcal{N}(\Omega_{t_k})$, we deduce from Lemma~\ref{lem:superlin} that a unique point of maximum of $E[\alpha u_k]$ with respect to $\alpha>0$ is achieved at $\alpha = 1$. Therefore,
	\begin{align*}
	E[\alpha(u) u] 
	\leqslant \liminf\limits_{k \to +\infty} E[\alpha(u) u_k] 
	\leqslant \liminf\limits_{k \to +\infty} E[u_k] = \lim_{k \to +\infty} \mu_+(\Omega_{t_k})
	 < \mu_+(\Omega).
	\end{align*}
	Thus, recalling that $u \geqslant 0$ a.e.\ in $\Omega$, we get a contradiction to the definition of $\mu_+(\Omega)$, and hence $\mu_+(\Omega) \leqslant \liminf\limits_{t \to 0} \mu_+(\Omega_t)$. This completes the proof.
\end{proof}

\begin{remark}
	Proposition~\ref{proof:contin} implies that from any sequence of minimizers $u_k$ of $\mu_+(\Omega_{t_k})$ (or $\mu_-(\Omega_{t_k})$), $k \in \mathbb{N}$, one can extract a subsequence which converges strongly in $W^1_p(\mathbb{R}^N)$ to a minimizer of $\mu_+(\Omega)$ (or $\mu_-(\Omega)$). In view of possible nonuniqueness, the limit minimizer may depend on a sequence $\{t_k\}_{k \in \mathbb{N}}$.
\end{remark}

Theorem~\ref{thm:1.5} can be proved using the same arguments as for Theorem~\ref{thm:1} (even without normalization by $\alpha_t$ in view of homogeneity of the functional $J$ in \eqref{mu:Hom}).

\section{Optimization problem in annuli}\label{sec:optimization}
In this section we prove Theorem~\ref{thm:2}. 
Let us fix $R_1 > R_0 > 0$ and $s \in [0, R_1 - R_0)$. 
Consider problem \eqref{D} in the open spherical annulus $B_{R_1}(0) \setminus \overline{B_{R_0}(se_1)}$:
\begin{equation}\label{Da}
\left\{
\begin{aligned}
-\Delta_p u &= f(u) 
&&\text{in } B_{R_1}(0) \setminus \overline{B_{R_0}(se_1)}, \\
u &= 0  &&\text{on } \partial B_{R_1}(0) \text{ and } \partial B_{R_0}(se_1).
\end{aligned}
\right.
\end{equation}
Recall the notation $\tilde{\mu}_\pm(s) = \mu_\pm(B_{R_1}(0) \setminus \overline{B_{R_0}(se_1)})$ for the least nontrivial critical levels defined by \eqref{mu:Nehari} and consider a diffeomorphism $\Phi_t(x) = x + t R(x)$, $|t|<\delta$, with the vector field $R(x) = \varrho(x) e_1$, where $\varrho$ is a smooth function equal to zero in a neighborhood of $\partial B_{R_1}(0)$ and equal to one in a neighborhood of $\partial B_{R_0}(se_1)$. It is not hard to see that
$$
\Phi_t(B_{R_1}(0) \setminus \overline{B_{R_0}(se_1)}) = B_{R_1}(0) \setminus \overline{B_{R_0}((s+t)e_1)}.
$$
Therefore $\mu_\pm(\Phi_t(B_{R_1}(0) \setminus \overline{B_{R_0}(se_1)})) = \tilde{\mu}_\pm(s+t)$. 
This fact, together with Proposition~\ref{proof:contin}, implies the first part of Theorem~\ref{thm:2}.
\begin{lemma}\label{lem:contin}
	Let $(A_1)-(A_4)$ be satisfied. Then $\tilde{\mu}_+(s)$ and $\tilde{\mu}_-(s)$ are continuous for sufficiently small $s \geqslant 0$.
	If moreover $(A_3^*)$ holds, then $\tilde{\mu}_+(s)$ and $\tilde{\mu}_-(s)$ are continuous on $[0, R_1-R_0)$. 
\end{lemma}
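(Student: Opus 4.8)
The plan is to deduce continuity of $s\mapsto\tilde{\mu}_\pm(s)$ at a fixed admissible point $s_0$ from the continuity-at-$t=0$ statement of Proposition~\ref{proof:contin}, applied to the base annulus $\Omega = B_{R_1}(0)\setminus\overline{B_{R_0}(s_0 e_1)}$ together with the inner-shift perturbation $\Phi_t(x)=x+t\varrho(x)e_1$ introduced above. Since $\varrho\equiv 1$ near $\partial B_{R_0}(s_0 e_1)$ and $\varrho\equiv 0$ near $\partial B_{R_1}(0)$, for all sufficiently small $|t|$ the map $\Phi_t$ fixes the outer sphere and rigidly shifts the inner ball, whence $\Phi_t(\Omega)=B_{R_1}(0)\setminus\overline{B_{R_0}((s_0+t)e_1)}$ and $\mu_\pm(\Phi_t(\Omega))=\tilde{\mu}_\pm(s_0+t)$. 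Proposition~\ref{proof:contin} then yields $\tilde{\mu}_\pm(s_0+t)=\mu_\pm(\Phi_t(\Omega))\to\mu_\pm(\Omega)=\tilde{\mu}_\pm(s_0)$ as $t\to 0$, which is precisely continuity of $\tilde{\mu}_\pm$ at $s_0$. (At $s_0=0$ only one-sided continuity is required, and the case $t<0$ is congruent to $t>0$ by the reflection symmetry of the annulus together with the orthogonal invariance of \eqref{D}, so nothing extra is needed.)

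Before invoking Proposition~\ref{proof:contin} one must check that $(A_1)-(A_4)$ are genuinely in force on every annulus occurring in a neighborhood of $s_0$. All hypotheses except the second half of $(A_3)$ are intrinsic to $f$ and domain-independent, so the only point at issue is the inequality $\limsup_{s\to 0} f(s)/(|s|^{p-2}s)<\lambda_p(\,\cdot\,)$ with $\lambda_p$ evaluated on $B_{R_1}(0)\setminus\overline{B_{R_0}(s e_1)}$. This is where I would bring in the result of \cite{anoopbobsasi} that $\lambda_p(s):=\lambda_p(B_{R_1}(0)\setminus\overline{B_{R_0}(s e_1)})$ is continuous and strictly decreasing, attaining its minimum over $[0,R_1-R_0]$ exactly at $s=R_1-R_0$.

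For the first assertion I only have $(A_3)$ at the concentric annulus, i.e.\ $\limsup_{s\to 0} f(s)/(|s|^{p-2}s)<\lambda_p(0)$. Continuity of $\lambda_p(\,\cdot\,)$ at $0$ propagates this strict inequality to a right-neighborhood: there is $s^*>0$ with $\limsup_{s\to 0} f(s)/(|s|^{p-2}s)<\lambda_p(s)$ for every $s\in[0,s^*)$. Hence $(A_3)$, and with it the existence of constant-sign minimizers of $\tilde{\mu}_\pm(s)$ (via Lemma~\ref{lem:existence} and Remark~\ref{rem:Omega_t}) and the applicability of Proposition~\ref{proof:contin}, holds at every $s_0\in[0,s^*)$, and the reduction of the first paragraph gives continuity there. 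For the second assertion, strict monotonicity gives $\lambda_p(s)>\lambda_p(B_{R_1}(0)\setminus\overline{B_{R_0}((R_1-R_0)e_1)})$ for all $s\in[0,R_1-R_0)$, so $(A_3^*)$ forces $\limsup_{s\to 0} f(s)/(|s|^{p-2}s)<\lambda_p(s)$ simultaneously on the whole range, and the same reduction delivers continuity on all of $[0,R_1-R_0)$.

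The main, and essentially the only substantive, obstacle is this upgrade of a single eigenvalue inequality in $(A_3)$ or $(A_3^*)$ to the entire one-parameter family of annuli, which rests squarely on the continuity and strict monotonicity of $\lambda_p(s)$ borrowed from \cite{anoopbobsasi}. Once that is in place, continuity of $\tilde{\mu}_\pm$ is an immediate corollary of Proposition~\ref{proof:contin} through the inner-shift diffeomorphism, with no fresh estimates needed; the sole remaining routine verification is that for $s_0<R_1-R_0$ the two bounding spheres remain disjoint under $\Phi_t$ for small $|t|$, so that each $\Phi_t(\Omega)$ is again a $C^{2,\varsigma}$ annulus and $\delta$ may be chosen small enough near $s_0$.
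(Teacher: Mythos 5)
Your proof is correct and takes essentially the same route as the paper: the inner-shift diffeomorphism $\Phi_t$ turns the family $\tilde{\mu}_\pm(s+t)$ into $\mu_\pm(\Phi_t(\Omega))$ for the fixed annulus $\Omega$, and Proposition~\ref{proof:contin} then yields continuity at each admissible $s$. Your verification that the domain-dependent half of $(A_3)$ propagates to the whole family of annuli --- via continuity of $\lambda_p(s)$ near $s=0$ for the first assertion, and via strict monotonicity of $\lambda_p(s)$ from \cite{anoopbobsasi} under $(A_3^*)$ for the second --- is precisely the justification the paper gives in Section~\ref{sec:intro} and Remark~\ref{rem:Omega_t}, merely spelled out in more detail.
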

Recall that imposing $(A_3^*)$ we can find minimizers of $\tilde{\mu}_\pm(s)$ for each $s \in [0, R_1-R_0)$, see the discussion in Section~\ref{sec:intro}. 
Without $(A_3^*)$ we can guarantee the existence of minimizers only for sufficiently small $s \geqslant 0$.

For simplicity of exposition we will give the proof of the second part of Theorem~\ref{thm:2} for $\tilde{\mu}_+(s)$ only. The case of $\tilde{\mu}_-(s)$ can be proved along the same lines. 
We will always assume that $(A_1)-(A_4)$ and $(A_3^*)$ are satisfied.

Let $v$ be an \textit{arbitrary} minimizer of $\tilde{\mu}_+(s)$, that is, $v$ is a least energy positive solution of \eqref{Da}. Recall that $v \in C^{1,\beta}(\overline{\Omega})$ and satisfies the Hopf maximum principle (see Remarks~\ref{rem:regularity} and \ref{rem:maximum} below). 
Defining $v_t(y) := v(\Phi_t^{-1}(y))$, $y \in B_{R_1}(0) \setminus \overline{B_{R_0}((s+t)e_1)}$, we have $\tilde{\mu}_+(s+t) \leqslant E[\alpha(v_t) v_t]$, where $\alpha(v_t)$ is given by Lemma~\ref{lem:superlin}.
Hence, noting that $\alpha(v) = 1$, from Theorem~\ref{thm:1} we obtain
\begin{align}
	\notag
	D&\tilde{\mu}_+(s) 
	:= \limsup_{t \to 0+} \frac{\tilde{\mu}_+(s+t) - \tilde{\mu}_+(s)}{t} \\
	&\leqslant
	\label{strange1}
	\limsup_{t \to 0+} \frac{E[\alpha(v_t) v_t] - E[\alpha(v) v]}{t} = \left.\frac{\partial E[\alpha(v_t) v_t]}{\partial t}\right|_{t=0} =  -\frac{p-1}{p} \int_{\partial B_{R_0}(se_1)} \left|\frac{\partial v}{\partial n} \right|^p n_1 \,d\sigma,
\end{align}
where $n_1 = n_1(x)$ is the first component of the outward unit normal vector $n$ to $\partial B_{R_0}(se_1)$.

Our main aim is to prove that $D\tilde{\mu}_+(s) < 0$ for all $s \in (0,R_1-R_0)$. 
In combination with the continuity of $\tilde{\mu}_+(s)$ (see Lemma~\ref{lem:contin}), this will immediately imply the desired strict monotonicity of $\tilde{\mu}_+(s)$ on $[0,R_1-R_0)$.

For the fixed $s \in [0, R_1-R_0)$ we write $\Omega := B_{R_1}(0) \setminus \overline{B_{R_0}(s e_1)}$, for simplicity. 
Denote by $H_{a}$ a hyperplane passing through the point $se_1$ (center of the inner ball) perpendicularly to a vector $a \neq 0$ which satisfies $\left<a, e_1\right>  \geqslant 0$.
Let $\rho_a: \mathbb{R}^N \to \mathbb{R}^N$ be a map which reflects a point $x \in \mathbb{R}^N$ with respect to $H_a$, and 
$\Sigma_a := \{x \in \mathbb{R}^N:\, \left<a, x-se_1\right> > 0\}$.
Note that under the assumption on $a$ we have $\rho_a(\Omega \cap \Sigma_a) \subseteq \{x \in \Omega:\, \left<a, x-se_1\right> < 0\}$.

First we prove the following fact.
\begin{lemma}\label{lem:mu<0}
	$D\tilde{\mu}_+(s) \leqslant 0$ for all $s \in [0,R_1-R_0)$. 
	Moreover, if $D\tilde{\mu}_+(s) = 0$ for some $s \in [0,R_1-R_0)$, then for any minimizer $v$ of $\tilde{\mu}_+(s)$ there exists $\varepsilon_0 > 0$ such that $v(x) = v(\rho_{e_1}(x))$ for all $x \in \partial B_{R_0+\varepsilon}(se_1)$ and $\varepsilon \in (0, \varepsilon_0)$. 
\end{lemma}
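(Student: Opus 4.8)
Throughout, write $\Sigma := \Sigma_{e_1} = \{x : x_1 > s\}$ (the ``narrow'' side of the shifted inner ball), $\rho := \rho_{e_1}$ for the reflection across $H_{e_1} = \{x_1 = s\}$, and split the inner sphere into the hemispheres $\Gamma_+ = \{x \in \partial B_{R_0}(se_1) : x_1 > s\}$ and $\Gamma_- = \{x_1 < s\}$, so that $\rho$ interchanges $\Gamma_\pm$, the normal $n$ from \eqref{strange1} satisfies $n_1 < 0$ on $\Gamma_+$, and $n_1 \circ \rho = -n_1$. Pairing each point with its reflection, the boundary integral in \eqref{strange1} becomes, for any minimizer $v$,
\[
\int_{\partial B_{R_0}(se_1)} \Big| \frac{\partial v}{\partial n} \Big|^p n_1 \, d\sigma = \int_{\Gamma_+} \Big( \big| \tfrac{\partial v}{\partial n}(x) \big|^p - \big| \tfrac{\partial v}{\partial n}(\rho x) \big|^p \Big) n_1(x) \, d\sigma(x).
\]
Since $n_1 < 0$ on $\Gamma_+$, proving $D\tilde\mu_+(s) \le 0$ reduces to exhibiting one minimizer for which $|\partial v/\partial n|$ is no larger on $\Gamma_+$ than at the reflected points of $\Gamma_-$.

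First I would construct such a minimizer by polarization, which here plays the role of the missing global strong comparison principle. Let $v^\Sigma$ be the polarization of the zero-extension of an arbitrary minimizer $v$ that sends the larger of the symmetric values $\{v(x), v(\rho x)\}$ to the wider component $\{x_1 < s\}$. The inclusion $\rho(\Omega \cap \Sigma) \subseteq \{x \in \Omega : x_1 < s\}$ keeps $\operatorname{supp} v^\Sigma \subseteq \overline\Omega$, so $v^\Sigma \in \wolp(\Omega)$, while polarization preserves $\int |\nabla \cdot|^p$ and every integral $\int G(\cdot)$; hence $v^\Sigma \ge 0$, $v^\Sigma \in \mathcal N(\Omega)$ and $E[v^\Sigma] = E[v] = \tilde\mu_+(s)$, so $v^\Sigma$ is again a least energy positive solution, in particular $C^{1,\beta}(\overline\Omega)$ with the Hopf property. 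By construction $v^\Sigma(x) \le v^\Sigma(\rho x)$ for $x \in \Sigma$; writing the inner-normal derivatives as difference quotients and using $v^\Sigma = 0$ on the inner sphere gives $|\partial v^\Sigma/\partial n(x)| = \min\{|\partial v/\partial n(x)|, |\partial v/\partial n(\rho x)|\}$ and $|\partial v^\Sigma/\partial n(\rho x)|$ equal to the corresponding $\max$, for $x \in \Gamma_+$. In particular $|\partial v^\Sigma/\partial n(x)| \le |\partial v^\Sigma/\partial n(\rho x)|$ on $\Gamma_+$, so applying \eqref{strange1} to the minimizer $v^\Sigma$ together with the identity above yields $\int |\partial v^\Sigma/\partial n|^p n_1 \, d\sigma \ge 0$ and thus $D\tilde\mu_+(s) \le 0$.

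For the equality case, assume $D\tilde\mu_+(s) = 0$ and fix an arbitrary minimizer $v$. Applying \eqref{strange1} to $v^\Sigma$ gives $\int |\partial v^\Sigma/\partial n|^p n_1 \le 0$, which together with the previous step forces $\int |\partial v^\Sigma/\partial n|^p n_1 = 0$. As the integrand on $\Gamma_+$ is sign-definite, this collapses the $\min/\max$ above to $|\partial v/\partial n(x)| = |\partial v/\partial n(\rho x)|$ for all $x \in \Gamma_+$, and moreover $\partial v^\Sigma/\partial n = \partial v/\partial n$ on the whole inner sphere. To upgrade this to the stated symmetry I would work in a one-sided neighborhood $N \subset \Omega \cap \Sigma$ of $\Gamma_+$ where, by the Hopf property, $\nabla v$ and $\nabla v^\Sigma$ are bounded away from $0$; there $-\Delta_p$ is uniformly elliptic and the difference of two solutions satisfies a linear uniformly elliptic equation. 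The nonnegative function $w := v - v^\Sigma$ has vanishing Cauchy data on $\Gamma_+$ (value $0$, and normal derivative $0$ by the previous line), so the strong maximum principle and Hopf's lemma give $w \equiv 0$ near $\Gamma_+$, i.e. $v \le v \circ \rho$ there; feeding the resulting sign of $v \circ \rho - v$ back into the same argument yields $v = v \circ \rho$ in a neighborhood of $\Gamma_+$, which is exactly $v(x) = v(\rho_{e_1} x)$ on $\partial B_{R_0 + \varepsilon}(se_1)$ for all small $\varepsilon > 0$.

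The routine parts are the polarization/change-of-variables bookkeeping and the difference-quotient computation of the normal derivatives. The real obstacle is the last local step: running the strong maximum principle and Hopf's lemma for the difference of two solutions up to $\Gamma_+$, where both vanish and the zeroth-order coefficient $c(x) = (f(v) - f(v^\Sigma))/(v - v^\Sigma)$ coming from the weak term need not stay bounded (since $(A_1)$ only gives $f \in C^1(\mathbb R \setminus \{0\})$). I expect this to be handled by exhausting $N$ by shells $\{R_0 + \varepsilon_1 \le |x - se_1| \le R_0 + \varepsilon_0\}$ on which $v$ is bounded away from $0$, letting $\varepsilon_1 \to 0$, or by a boundary point lemma adapted to this degeneracy. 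This is precisely where the local non-degeneracy of the gradient substitutes for the global strong comparison principle unavailable when $p \ne 2$.
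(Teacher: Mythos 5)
Your strategy is the same as the paper's proof of Lemma~\ref{lem:mu<0}: polarize an arbitrary minimizer across $H_{e_1}$, observe that polarization preserves all the relevant integrals so that $v^\Sigma$ is again a minimizer to which \eqref{strange1} applies, pair points of the inner sphere to deduce $D\tilde{\mu}_+(s)\leqslant 0$, and, in the equality case, pass to a linearized comparison argument in a shell around the inner sphere where the Hopf property keeps the gradients away from zero. Your first part is complete and correct (your min/max identity for $\partial v^\Sigma/\partial n$ at paired points is a slightly sharper form of the paper's inequality \eqref{domain_derivative2}), and your derivation of equal normal derivatives in the equality case is also right.

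The gap is in the final step, precisely the point you call ``the real obstacle'', and it is self-inflicted: you rewrite the difference of the two equations as $Lw = c(x)w$ with $c(x)=(f(v)-f(v^\Sigma))/(v-v^\Sigma)$ and then must fight the possible blow-up of $c$ near the inner sphere, which $(A_1)$ indeed permits. Neither of your proposed repairs closes this: on the shells $\{R_0+\varepsilon_1\leqslant |x-se_1|\leqslant R_0+\varepsilon_0\}$ the coefficient is bounded, but the Hopf boundary-point lemma must be invoked at points of $\partial B_{R_0}(se_1)\cap\Sigma_{e_1}$, which lie outside every such shell, so letting $\varepsilon_1\to 0$ never produces the contradiction with the vanishing normal derivative of $w$; and the ``adapted boundary point lemma'' is left unproved. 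The missing observation --- which is how the paper argues --- is that no zeroth-order coefficient should be formed at all: since $(A_3)$ makes $f$ strictly increasing on $\mathbb{R}$ and the two functions being compared are ordered, the difference of the two equations yields a differential inequality with a one-signed right-hand side (this is exactly \eqref{eq:oper0}), where the operator is the linearization of the $p$-Laplacian alone, as in \cite{fleckingertakac}: uniformly elliptic, bounded first-order coefficients, and \emph{no} zeroth-order term in the shell where $|\nabla v|,|\nabla v^\Sigma|>\eta$. Monotonicity of $f$ enters only to fix that sign, which places the nonnegative function $w$ on the correct side of the classical strong maximum principle and Hopf lemma \cite[Theorem~3.5 and Lemma~3.4]{giltrud}; these then apply up to the inner sphere with no coefficient issue, giving the dichotomy ``$w\equiv 0$, or $w>0$ with strictly negative outward derivative at each zero on $\partial B_{R_0}(se_1)\cap\Sigma_{e_1}$'', and the second alternative contradicts the vanishing Cauchy data. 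With this substitution your argument is complete; the only remaining difference from the paper is cosmetic --- you run the comparison twice (for $v-v^\Sigma$, then for $v\circ\rho_{e_1}-v$), while the paper runs it once for $V(\rho_{e_1}(\cdot))-V$, which on $\Sigma_{e_1}$ equals $|v-v\circ\rho_{e_1}|$ and hence yields the symmetry in one stroke.
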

\begin{proof}
Let $v$ be a minimizer of $\tilde{\mu}_+(s)$ for some $s \in [0,R_1-R_0)$. Extend $v$ by zero outside of $\Omega$ and consider the following function:
\begin{equation}\label{polarization_v}
 V(x) = \begin{cases}
         \min(v(x),v(\rho_{e_1}(x))), &x \in \Sigma_{e_1},\\
         \max(v(x),v(\rho_{e_1}(x))), &x \in \mathbb{R}^N \setminus \Sigma_{e_1}.
        \end{cases}
\end{equation}
The function $V$ is the \textit{polarization} of $v$ with respect to $H_{e_1}$, see, e.g., \cite{brocksol,BartschWethWillem}. 
It is known that $V \in \wolp(\Omega)$, $V \geqslant 0$ in $\Omega$, and 
\begin{equation*}
 \int_{\Omega} |\nabla V|^p \,dx = \int_{\Omega} |\nabla v|^p \,dx,
 \quad
 \int_{\Omega} V f(V) \,dx = \int_{\Omega} v f(v) \,dx,
 \quad 
 \int_{\Omega} F(V) \,dx = \int_{\Omega} F(v) \,dx,
\end{equation*}
see \cite[Corollary~5.1]{brocksol} and \cite[Lemma~2.2]{BartschWethWillem}. 
In particular, $V \in \mathcal{N}(\Omega)$ and $E[V] = E[v]$, that is, $V$ is also a minimizer of $\tilde{\mu}_+(s)$. Since \eqref{strange1} holds for an arbitrary minimizer of $\tilde{\mu}_+(s)$, we arrive at 
\begin{equation*}
D\tilde{\mu}_+(s) \leqslant - \frac {p-1} p \int_{\partial B_{R_0}(se_1)} \left| \frac{\partial V}{\partial n} \right|^p n_1 \,d\sigma.
\end{equation*}
Now, since $V(x) = V(\rho_{e_1}(x)) = 0$ for all $x \in \partial B_{R_0}(se_1)$, and 
$V(x) \leqslant V(\rho_{e_1}(x))$ for all $x \in \Sigma_{e_1}$, we get 
\begin{equation}\label{domain_derivative2}
\frac{\partial V}{\partial n} (\rho_{e_1}(x)) \leqslant \frac{\partial V}{\partial n}(x) < 0 
\quad \text{for all } 
x \in \partial B_{R_0}(se_1) \cap \Sigma_{e_1}.
\end{equation}
Moreover, noting that $n_1(x) = - n_1(\rho_{e_1}(x))$ and $n_1(x) < 0$ for all $x \in \partial B_{R_0}(se_1) \cap \Sigma_{e_1}$, we get
\begin{equation}\label{domain_derivative1}
D\tilde{\mu}_+(s) \leqslant - \frac {p-1} p \int_{\partial B_{R_0}(se_1) \cap \Sigma_{e_1}} \left( \left| \frac{\partial V}{\partial n}(x) \right|^p - \left| \frac{\partial V}{\partial n}(\rho_{e_1}(x)) \right|^p \right) n_1(x) \,d\sigma \leqslant 0.
\end{equation}
This is the desired conclusion. 

Let us prove the second part of the lemma. Suppose that $D\tilde{\mu}_+(s) = 0$ for some $s \in [0,R_1-R_0)$. Polarizing any minimizer $v$ of $\tilde{\mu}_+(s)$ as above, we conclude from \eqref{domain_derivative2} and \eqref{domain_derivative1} that 
\begin{equation}\label{eq:assumvv}
\frac{\partial V}{\partial n} (\rho_{e_1}(x)) = \frac{\partial V}{\partial n}(x) < 0 
\quad \text{for all } 
x \in \partial B_{R_0}(se_1) \cap \Sigma_{e_1}.
\end{equation}
Define a function $w(x) = V(\rho_{e_1}(x)) - V(x)$. 
By the properties of $V$ we have $w \geqslant 0$ in $\Omega \cap \Sigma_{e_1}$ and $w=0$ on $\partial B_{R_0}(se_1) \cap \Sigma_{e_1}$.
Moreover, since $\frac{\partial V}{\partial n} < 0$ on $\partial B_{R_0}(se_1)$, we can find $\varepsilon_0 > 0$ such that for any $\varepsilon \in (0, \varepsilon_0)$ there exists $\eta > 0$ such that $|\nabla V| > \eta$ in $B_{R_0+\varepsilon}(se_1) \setminus B_{R_0}(se_1)$. 
Therefore, using Remark~\ref{rem:reg2}, we can linearize the difference $\Delta_p V(\rho_{e_1}(\cdot)) - \Delta_p V$ in $(B_{R_0 + \varepsilon}(se_1) \setminus B_{R_0}(se_1)) \cap \Sigma_{e_1}$ as in the proof of \cite[Proposition~5.1]{fleckingertakac} on p.~1239 and obtain that $w$ satisfies pointwise the following linear elliptic inequality in this set:
\begin{equation}
\label{eq:oper0}
\sum_{i,j=1}^{N} a_{ij}(x) \frac{\partial^2 w}{\partial x_i \partial x_j} + \sum_{i=1}^{N} b_i(x) \frac{\partial w}{\partial x_i} = f(V(\rho_{e_1}(x))) - f(V(x)) \geqslant 0.
\end{equation}
(The inequality in \eqref{eq:oper0} follows from the monotonicity of $f$ on $\mathbb{R}$, see $(A_3)$.)
The matrix $\{a_{ij}\}_{i,j=1}^N$ is symmetric and there exist $C_1 = C_1(\eta) > 0$ and $C_2 = C_2(\eta) > 0$ such that 
$$
C_1 |\xi|^2 \leqslant \sum_{i,j=1}^{N} a_{ij}(x) \xi_i \xi_j \leqslant C_2 |\xi|^2
$$
for any $x \in (B_{R_0 + \varepsilon}(se_1) \setminus B_{R_0}(se_1)) \cap \Sigma_{e_1}$ and $\xi \in \mathbb{R}^N \setminus \{0\}$. 
That is, the elliptic operator on the left-hand side of \eqref{eq:oper0} is uniformly elliptic. Moreover, each $b_i \in L^\infty((B_{R_0 + \varepsilon}(se_1) \setminus B_{R_0}(se_1)) \cap \Sigma_{e_1})$. 
Hence, the classical strong maximum principle \cite[Theorem~3.5, p.~35]{giltrud} implies that
either $V(x) = V(\rho_{e_1}(x))$ for any $x \in (B_{R_0 + \varepsilon}(se_1) \setminus B_{R_0}(se_1)) \cap \Sigma_{e_1}$, or $V(x) < V(\rho_{e_1}(x))$ in the same set and 
\begin{equation}\label{eq:strict1}
\frac{\partial V}{\partial n}(\rho_{e_1}(x)) <
\frac{\partial V}{\partial n}(x) 
< 0 
\quad \text{for all } x \in \partial B_{R_0}(se_1) \cap \Sigma_{e_1}
\end{equation}
by \cite[Lemma~3.4, p.~34]{giltrud}. 
However, in view of \eqref{eq:assumvv} only the first case can occur. 
Thus, from the definition of the polarization we obtain the desired fact: $v(x) = v(\rho_{e_1}(x))$ for any $x \in B_{R_0 + \varepsilon}(se_1) \setminus B_{R_0}(se_1)$ and $\varepsilon \in (0, \varepsilon_0)$.
\end{proof}

\begin{remark}\label{rem:noscp}
	In the second part of the proof of Lemma~\ref{lem:mu<0}, $V$ and $V(\rho_{e_1}(\cdot))$ satisfy $V(x) < V(\rho_{e_1}(x))$ for all $x \in \partial B_{R_1}(0) \cap \Sigma_{e_1}$ whenever $s \in (0, R_1-R_0)$. Therefore, in the case $p=2$ the classical strong maximum (comparison) principle implies that $V(x) < V(\rho_{e_1}(x))$ for all $x \in \Omega \cap \Sigma_{e_1}$ and \eqref{eq:strict1} holds. 
	This yields $D\tilde{\mu}_+(s) < 0$ for any $s \in (0, R_1-R_0)$. 
	However, the lack of strong comparison principles in the general case $p>1$ does not allow to conclude directly that $D\tilde{\mu}_+(s) < 0$. 
	(We refer to \cite{damascelli,sciunzi} for versions of the strong comparison principle under the restriction $p > \frac{2N+2}{N+2}$.) 
	On the other hand, the arguments which we use below do not require any global strong comparison result and rely mainly on the (local) strong comparison principle near the boundary of $\Omega$, where the $p$-Laplacian is neither degenerate nor singular thanks to the Hopf maximum principle. 
\end{remark}

Now we show the following result on existence of \textit{axially symmetric} minimizers of $\tilde{\mu}_+(s)$.
\begin{lemma}\label{lem:sphericallysym}
	For any $s \in [0, R_1-R_0)$ there exists a minimizer of $\tilde{\mu}_+(s)$ invariant under rotations around axis $e_1$.
\end{lemma}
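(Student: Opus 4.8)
The plan is to produce the required symmetric minimizer by applying the \emph{spherical} (foliated Schwarz) symmetrization to an arbitrary minimizer of $\tilde{\mu}_+(s)$ with respect to the axis $\mathbb{R}e_1$, concentrating mass toward the pole $-e_1$; cf.\ \cite{kawohl,BartschWethWillem}. What makes this symmetrization admissible here is purely geometric. For each $r \in (0, R_1)$ the sphere $\partial B_r(0)$ lies inside $B_{R_1}(0)$, so $\Omega \cap \partial B_r(0) = \partial B_r(0) \setminus \overline{B_{R_0}(se_1)}$. Since both the sphere and the inner ball are bodies of revolution about $\mathbb{R}e_1$ and the inner ball is centered on the nonnegative $e_1$-axis, the removed set $\partial B_r(0) \cap \overline{B_{R_0}(se_1)}$ is a geodesic cap around the $+e_1$ pole; hence the admissible region $\Omega \cap \partial B_r(0)$ is a geodesic cap around the $-e_1$ pole. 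Thus, extending a minimizer $v$ by zero outside $\Omega$ and rearranging its restriction to each sphere $\partial B_r(0)$ so that the super-level sets become geodesic caps about $-e_1$, the support of the resulting function $v^*$ on $\partial B_r(0)$ stays a cap about $-e_1$ whose measure equals that of $\{v>0\}\cap\partial B_r(0)$ and therefore fits inside $\Omega \cap \partial B_r(0)$. Consequently $v^* \in \wolp(\Omega)$, $v^* \geqslant 0$, $v^* \not\equiv 0$, and $v^*$ is invariant under rotations around $e_1$ by construction.

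First I would record the two standard properties of this symmetrization. Because on every sphere $v^*$ is an equimeasurable rearrangement of $v$, the distribution of $v$ is preserved, so
\begin{equation*}
\int_\Omega g(v^*)\,dx = \int_\Omega g(v)\,dx \quad \text{for every measurable } g;
\end{equation*}
in particular $\int_\Omega F(\alpha v^*)\,dx = \int_\Omega F(\alpha v)\,dx$ and $\int_\Omega v^* f(\alpha v^*)\,dx = \int_\Omega v f(\alpha v)\,dx$ for all $\alpha > 0$. Second, the P\'olya--Szeg\H{o} inequality for spherical symmetrization yields $\int_\Omega |\nabla v^*|^p\,dx \leqslant \int_\Omega |\nabla v|^p\,dx$.

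Combining these, for every $\alpha > 0$ I obtain
\begin{equation*}
E[\alpha v^*] = \frac{\alpha^p}{p}\int_\Omega |\nabla v^*|^p\,dx - \int_\Omega F(\alpha v^*)\,dx \leqslant \frac{\alpha^p}{p}\int_\Omega |\nabla v|^p\,dx - \int_\Omega F(\alpha v)\,dx = E[\alpha v].
\end{equation*}
By Lemma~\ref{lem:superlin}, the map $\alpha \mapsto E[\alpha u]$ attains its unique maximum over $\alpha>0$ at the Nehari projection $\alpha(u)u \in \mathcal{N}(\Omega)$; since $v \in \mathcal{N}(\Omega)$ this gives $\max_{\alpha>0} E[\alpha v] = E[v] = \tilde{\mu}_+(s)$. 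Taking the maximum over $\alpha$ in the inequality above therefore leads to
\begin{equation*}
\tilde{\mu}_+(s) \leqslant E[\alpha(v^*)v^*] = \max_{\alpha>0} E[\alpha v^*] \leqslant \max_{\alpha>0} E[\alpha v] = \tilde{\mu}_+(s),
\end{equation*}
where the first inequality holds because $\alpha(v^*)v^* \in \mathcal{N}(\Omega)$ and $\alpha(v^*)v^* \geqslant 0$. Hence equality holds throughout, so $\alpha(v^*)v^*$ is a minimizer of $\tilde{\mu}_+(s)$, and it is invariant under rotations around $e_1$ because $v^*$ is.

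The only genuinely delicate point is the admissibility of the symmetrization, i.e.\ verifying that $v^*$ does not spill out of $\Omega$; this is precisely the geometric cap argument of the first paragraph, and it is exactly where the choice of the pole $-e_1$, away from the off-center hole, is essential. The remaining ingredients — distribution invariance and the P\'olya--Szeg\H{o} inequality — are classical for the spherical symmetrization, and the energy comparison along Nehari rays is then routine.
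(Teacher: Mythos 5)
Your proof is correct, and it rests on the same core device as the paper: extend the minimizer $v$ by zero, apply the foliated Schwarz (spherical) symmetrization about the pole $-e_1$, observe that $\Omega$ is invariant under this symmetrization (your geodesic-cap argument is exactly the paper's observation that $\Omega^*=\Omega$), and invoke equimeasurability together with the P\'olya--Szeg\H{o} inequality. Where you genuinely diverge is in how you close the argument. The paper splits into cases according to whether equality holds in $\int_\Omega|\nabla v^*|^p\,dx\leqslant\int_\Omega|\nabla v|^p\,dx$: if yes, $v^*\in\mathcal{N}(\Omega)$ and is itself a minimizer; if the inequality is strict, the Nehari projection satisfies $\alpha^*\in(0,1)$ and the strict monotonicity of $\alpha\mapsto\frac1p\alpha vf(\alpha v)-F(\alpha v)$ (from the first part of $(A_3)$) yields $E[\alpha^*v^*]<E[v]=\tilde\mu_+(s)$, a contradiction. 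You instead sandwich: $E[\alpha v^*]\leqslant E[\alpha v]$ for every $\alpha>0$, so by Lemma~\ref{lem:superlin} the maxima along rays satisfy $\tilde\mu_+(s)\leqslant E[\alpha(v^*)v^*]=\max_{\alpha>0}E[\alpha v^*]\leqslant\max_{\alpha>0}E[\alpha v]=E[v]=\tilde\mu_+(s)$. This is cleaner --- no dichotomy, no contradiction, and no use of the $(A_3)$-monotonicity beyond what Lemma~\ref{lem:superlin} already encodes --- and it suffices for the statement of the lemma. What the paper's longer route buys in exchange is slightly more information: it shows that equality must in fact hold in P\'olya--Szeg\H{o}, so that $v^*$ itself (not merely its rescaling $\alpha(v^*)v^*$) lies on $\mathcal{N}(\Omega)$ and is a minimizer; for the later use in Lemmas~\ref{lem:rad1} and~\ref{lem:rad2} either conclusion works, since a positive multiple of an axially symmetric minimizer is again an axially symmetric minimizer.
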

\begin{proof}
	Let $v$ be an arbitrary minimizer of $\tilde{\mu}_+(s)$ for some $s \in [0, R_1-R_0)$. 
	Recall that $v \in C^{1,\beta}(\overline{\Omega})$ and $v > 0$ in $\Omega$, see Remarks~\ref{rem:regularity} and \ref{rem:maximum} below. 
	To prove the assertion we apply the spherical symmetrization for $v$ with respect to $-e_1$. Namely, for a set $A \subset \mathbb{R}^N$ its spherical symmetrization around $-e_1$ is a set  $A^*$ defined such that for any $r > 0$, $A^* \cap \partial B_r(0)$ is a spherical cap of $\partial B_r(0)$ with the pole $-re_1$ and $\text{meas}(A^* \cap \partial B_r(0)) = \text{meas}(A \cap \partial B_r(0))$, see, e.g., \cite{kawohl,BartschWethWillem}.
	Then, the spherical symmetrization of $v$ around $-e_1$ is a function $v^*: \mathbb{R}^N \to \mathbb{R}$ defined as 
	$$
	\{x \in \Omega^*:\, v^*(x) \geqslant t\} = \{x \in \Omega:\, v(x) \geqslant t\}^* \quad \text{for all} \quad t \geqslant 0.
	$$
	By construction, $v^*$ is invariant under rotations around $e_1$. 
	Due to the symmetry of $\Omega = B_{R_1}(0) \setminus \overline{B_{R_0}(se_1)}$, we have $\Omega^* = \Omega$. Thus, $v^* \in \wolp(\Omega)$, as follows from \cite[property (L), p.~20]{kawohl}. Moreover, \cite[properties (G1), p.~26, and (C), p.~22]{kawohl} imply that
	$$
	\int_\Omega |\nabla v^*|^p \, dx \leqslant \int_\Omega |\nabla v|^p \, dx, 
	\quad 
	\int_\Omega v^* f(v^*) \, dx = \int_\Omega vf(v)\, dx,
	\quad 
	\int_\Omega F(v^*) \, dx = \int_\Omega F(v)\, dx.
	$$
	If in the first expression equality holds, then $v^* \in \mathcal{N}(\Omega)$, $v^* \geqslant 0$ and $E[v^*] = E[v]$, that is, $v^*$ is a minimizer of $\tilde{\mu}_+(s)$ with the desired properties. 
	Else, we get a contradiction. Indeed, using Lemma~\ref{lem:superlin}, we can find $\alpha^* \in (0,1)$ such that $\alpha^* v^* \in \mathcal{N}(\Omega)$. However,
	\begin{multline*}
	 \tilde{\mu}_+(s) \leqslant E[\alpha^* v^*] = E[\alpha^* v^*] - \frac{1}{p} E'[\alpha^* v^*](\alpha^* v^*) = \int_{\Omega} \left( \frac{1}{p} \alpha^* v^* f(\alpha^* v^*) - F(\alpha^* v^*) \right) dx < \\
         < \int_{\Omega} \left( \frac{1}{p}  v^* f(v^*) - F(v^*) \right) dx = \int_{\Omega} \left( \frac{1}{p}  v f(v) - F(v) \right) dx = E[v] - \frac{1}{p} E'[v]v = E[v] = \tilde{\mu}_+(s),
	\end{multline*}
	where the strict inequality follows from the first part of $(A_3)$, which is impossible. 
\end{proof}

\begin{remark}\label{rem:nondiff}
	In \cite{coffman} it was proved that in the case $p=2$ and $N=2$, $\tilde{\mu}_+(0)$ possesses a nonradial minimizer $v$ if the annulus is sufficiently thin (see also \cite{Nazar2, Nazar, Kolon} and references therein for the development of this result for $p>1$ and $N \geqslant 2$). 
	Using Lemma~\ref{lem:sphericallysym}, we can assume that $v$ is axially symmetric with respect to $e_1$. Moreover, the spherical symmetrization implies, in fact, that $v$ is polarized with respect to $H_{e_1}$, that is, $v(x) \leqslant v(\rho_{e_1}(x))$ for all $x \in \Omega \cap \Sigma_{e_1}$. 
	However, since $v$ is nonradial, the classical strong maximum principle implies that $v(x) < v(\rho_{e_1}(x))$ in this domain, which yields $D\tilde{\mu}_+(0) < 0$, see~\eqref{domain_derivative1}. 
	This fact contradicts the possible differentiability of $\tilde{\mu}_+(s)$ at $s=0$. Indeed, if $\tilde{\mu}_+(0)$ is differentiable, then $D\tilde{\mu}_+(0) = (\tilde{\mu}_+(0))'_s$ and we must have $(\tilde{\mu}_+(0))'_s = 0$, due to the symmetry of $\Omega$. 
\end{remark}

The following lemma provides the main ingredient for the proof of $D\tilde{\mu}_+(s) < 0$ for $s \in (0, R_1-R_0)$. (See \cite[Theorem~3.8]{anoopbobsasi} about the analogous properties for the first eigenvalue $\lambda_p(s)$.)
\begin{lemma}\label{lem:rad1}
	Let $D\tilde{\mu}_+(s) = 0$ for some $s \in [0,R_1-R_0)$.
	Then for any axially symmetric (with respect to $e_1$) minimizer $v$ of $\tilde{\mu}_+(s)$ there is a ball $B_{r_0}(se_1)$ with $r_0 \in (R_0, R_1-s)$ such that $v$ is radial in the annulus $B_{r_0}(se_1) \setminus B_{R_0}(se_1)$. Moreover, $|\nabla v| = 0$ on $\partial B_{r_0}(se_1)$ and $v \in C^2(B_{r_0}(se_1) \setminus B_{R_0}(se_1))$.
\end{lemma}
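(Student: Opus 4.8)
The plan is to upgrade the single reflection symmetry produced by Lemma~\ref{lem:mu<0} (symmetry of $v$ across the one hyperplane $H_{e_1}$ near the inner sphere) to full radial symmetry, by running the same polarization/Hopf argument simultaneously in \emph{every} admissible direction $a$ and exploiting the rotational invariance of the outer ball. Throughout I fix $s\in(0,R_1-R_0)$ (the concentric case $s=0$ being analogous and simpler) and write $\Omega=B_{R_1}(0)\setminus\overline{B_{R_0}(se_1)}$. Recall that Lemma~\ref{lem:mu<0} together with the continuity from Lemma~\ref{lem:contin} makes $\tilde{\mu}_+$ non-increasing on $[0,R_1-R_0)$, and that any minimizer is positive in $\Omega$ with non-vanishing normal derivative on $\partial\Omega$ by the Hopf lemma.

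The key new ingredient is a tilted analogue of \eqref{strange1}. For a unit vector $a$ with $\langle a,e_1\rangle>0$, shifting the inner ball in direction $a$ via the field $R=\varrho\,a$ produces the domain $B_{R_1}(0)\setminus\overline{B_{R_0}(se_1+ta)}$, whose inner centre lies at distance $|se_1+ta|$ from the origin; by rotational invariance its least level equals $\tilde{\mu}_+(|se_1+ta|)$. Applying Theorem~\ref{thm:1} to any minimizer $w$ of $\tilde{\mu}_+(s)$ exactly as in \eqref{strange1} gives
\begin{equation*}
\limsup_{t\to 0+}\frac{\tilde{\mu}_+(|se_1+ta|)-\tilde{\mu}_+(s)}{t}\leqslant -\frac{p-1}{p}\int_{\partial B_{R_0}(se_1)}\left|\frac{\partial w}{\partial n}\right|^p\langle a,n\rangle\,d\sigma .
\end{equation*}
I would then show the left-hand side equals exactly $0$: since $|se_1+ta|=s+t\langle a,e_1\rangle+O(t^2)$ with $\langle a,e_1\rangle>0$, monotonicity makes every difference quotient $\leqslant 0$, while a sequence $\tau_k\to 0+$ realizing $D\tilde{\mu}_+(s)=0$, pulled back through the continuous increasing reparametrization $t\mapsto|se_1+ta|-s$, yields a sequence along which the quotient tends to $0$. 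Hence $\int_{\partial B_{R_0}(se_1)}|\partial w/\partial n|^p\langle a,n\rangle\,d\sigma\leqslant 0$ for every minimizer $w$ and every $a$ with $\langle a,e_1\rangle>0$.

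Next I would fix such an $a$, polarize $v$ with respect to $H_a$ to obtain a minimizer $V_a$, and repeat the computation \eqref{domain_derivative1} verbatim: the polarization inequality forces $\int_{\partial B_{R_0}(se_1)}|\partial V_a/\partial n|^p\langle a,n\rangle\,d\sigma\geqslant 0$. The two bounds pinch this integral to $0$, so its signed integrand vanishes pointwise; as $\langle a,n\rangle<0$ strictly on $\partial B_{R_0}(se_1)\cap\Sigma_a$, this gives $\frac{\partial V_a}{\partial n}(\rho_a x)=\frac{\partial V_a}{\partial n}(x)<0$ there, and the strong maximum principle for the linearized operator near the inner sphere (Remark~\ref{rem:reg2}), exactly as in the second half of Lemma~\ref{lem:mu<0}, yields $v=v\circ\rho_a$ in $B_{R_0+\varepsilon_0}(se_1)\setminus B_{R_0}(se_1)$. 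Because the Hopf bound $|\nabla v|>\eta_0$ holds in a \emph{uniform} tubular neighbourhood of $\partial B_{R_0}(se_1)$, the radius $\varepsilon_0$ can be taken independent of $a$. Since the stabilizer $\{g:\ v\circ g=v\}$ is closed and $\rho_a=\rho_{-a}$, letting $a$ range over the closed upper hemisphere produces reflections across \emph{all} hyperplanes through $se_1$; these generate $O(N)$ about $se_1$, so $v$ is radial in $B_{R_0+\varepsilon_0}(se_1)\setminus B_{R_0}(se_1)$.

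Finally I would propagate the radiality outward. Let $r_0$ be the supremum of radii $r$ for which $v$ is radial about $se_1$ and $|\nabla v|\neq 0$ on $B_r(se_1)\setminus\overline{B_{R_0}(se_1)}$; then $r_0>R_0$. The value $r_0=R_1-s$ is impossible, since $\partial B_{R_1-s}(se_1)$ is internally tangent to $\partial B_{R_1}(0)$ at $R_1e_1$ where $v=0$, so radiality would force $v\equiv 0$ on that whole sphere, contradicting $v>0$ in $\Omega$. If instead $r_0<R_1-s$ while $|\nabla v|\neq 0$ on $\partial B_{r_0}(se_1)$, then for each $a$ the difference $v\circ\rho_a-v$ vanishes on the annulus already reached and satisfies a uniformly elliptic linear equation across $\partial B_{r_0}(se_1)$ (the gradient not vanishing and $v>0$ there), so unique continuation pushes $v=v\circ\rho_a$ past $r_0$, contradicting maximality. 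Hence $r_0\in(R_0,R_1-s)$ and $|\nabla v|=0$ on $\partial B_{r_0}(se_1)$, while on the non-degenerate annulus $B_{r_0}(se_1)\setminus\overline{B_{R_0}(se_1)}$ standard regularity for the there uniformly elliptic $p$-Laplacian gives $v\in C^2$. I expect the main obstacle to be the tilted estimate in the second paragraph: since only the one-sided Dini derivative $D\tilde{\mu}_+(s)$ is controlled, the $\limsup$ along the curved reparametrization $t\mapsto|se_1+ta|$ must be handled carefully, and one must verify that each reflection symmetry holds on a neighbourhood uniform in $a$ so that the generated group is genuinely $O(N)$ and not a proper subgroup.
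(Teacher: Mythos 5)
Your proposal is correct in its essentials, but it takes a genuinely different route from the paper's. The paper never derives Hadamard-type information in any direction other than $e_1$: it starts from the axially symmetric minimizer supplied by Lemma~\ref{lem:sphericallysym}, combines that axial symmetry with the $e_1$-reflection symmetry of Lemma~\ref{lem:mu<0} to conclude that antipodal points of each sphere $\partial B_{R_0+\varepsilon}(se_1)$ carry equal values of $v$, and then, assuming $v(\hat{x})\neq v(\hat{y})$ for two points of such a sphere, polarizes in the single direction $c=\bar{x}-\bar{y}$ of the associated diameter: the strong-maximum-principle dichotomy for $V_c$ versus $V_c\circ\rho_c$ (identically equal, or strictly ordered with strict Hopf inequality) cannot accommodate equality at the antipodal pair $\bar{x},\bar{y}$ and strict inequality at $\hat{x}$ simultaneously. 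You instead re-run the pinching mechanism of Lemma~\ref{lem:mu<0} for \emph{every} direction $a$ with $\left<a,e_1\right>>0$, using the tilted Hadamard bound together with the reparametrization $t\mapsto|se_1+ta|-s$ (an increasing homeomorphism with derivative $\left<a,e_1\right>>0$ at zero, which is precisely why tangential directions must be recovered by closure of the stabilizer rather than directly), and then invoke the fact that reflections through $se_1$ generate the full orthogonal group. What your route buys: it never uses the axial-symmetry hypothesis, so it proves the stronger statement that every minimizer is radial near the inner sphere when $D\tilde{\mu}_+(s)=0$, making Lemma~\ref{lem:sphericallysym} unnecessary for this step. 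What the paper's route buys: it is lighter, needing one Hadamard estimate instead of a one-parameter family, and neither the monotonicity of $\tilde{\mu}_+$ nor a group-generation argument.

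Two points in your write-up need care, though neither is fatal. First, the outward propagation by unique continuation requires Lipschitz (not merely H\"older) principal coefficients of the linearized operator in dimension $N\geqslant 3$; this is available because $v\in C^2$ where $|\nabla v|\neq 0$, but you can avoid unique continuation altogether --- as the paper effectively does --- by running your polarization/maximum-principle dichotomy directly on $(B_r(se_1)\setminus\overline{B_{R_0}(se_1)})\cap\Sigma_a$ for every $r$ below the first radius at which $|\nabla v|$ vanishes: the pinch at $\partial B_{R_0}(se_1)$ already excludes the strict branch on the large annulus, so the reflection symmetry, and hence radiality, extends to the maximal annulus in one stroke. Second, your tangency argument excluding $r_0=R_1-s$ fails verbatim at $s=0$, since the tangent sphere then lies entirely in $\partial\Omega$ and $v=0$ there is no contradiction; in that case (and in fact for all $s$) one should argue instead that a radial profile which is positive in the annulus and tends to zero at the outer end must have a vanishing derivative at an interior radius, contradicting $|\nabla v|\neq 0$ below $r_0$.
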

\begin{proof}
 	Let $D\tilde{\mu}_+(s) = 0$ for some $s \in [0,R_1-R_0)$ and let $v$ be a minimizer of $\tilde{\mu}_+(s)$ which is invariant under rotations around $e_1$ (see Lemma~\ref{lem:sphericallysym}).	
 	Due to the Hopf maximum principle (see Remark~\ref{rem:maximum} below) we can find $\varepsilon_0 > 0$ (as in Lemma~\ref{lem:mu<0}) such that for any $\varepsilon \in (0, \varepsilon_0)$ there exists $\eta > 0$ such that $|\nabla v| > \eta$ in $B_{R_0+\varepsilon}(se_1) \setminus B_{R_0}(se_1)$.  
 	
 	Suppose, contrary to the radiality of $v$, that for some $\varepsilon \in (0, \varepsilon_0)$ there exist $\hat{x}, \hat{y} \in \partial B_{R_0+\varepsilon}(se_1)$ such that $v(\hat{x}) \neq v(\hat{y})$. 
 	Note that $\hat{y} \neq \rho_{e_i}(\hat{x})$ for $i=2,\dots,N$ since $v$ is axially symmetric with respect to $e_1$. 
 	Moreover, $\hat{y} \neq \rho_{e_1}(\hat{x})$, as it follows from Lemma~\ref{lem:mu<0}. 	
 	Let $\bar{x}, \bar{y} \in \partial B_{R_0 + \varepsilon}(se_1)$ lie on the opposite sides of the diameter of $B_{R_0 + \varepsilon}(se_1)$ which is collinear to $\hat{x}-\hat{y}$. Assume, without loss of generality, that $\bar{x}_1 \geqslant s$. 
 	Using the symmetries of $v$ given by Lemmas~\ref{lem:mu<0} and \ref{lem:sphericallysym}, we derive that 
 	\begin{align*}
 	\notag
 	v(\bar{x}_1, \dots, \bar{x}_N) &= v(2s-\bar{x}_1, \bar{x}_2, \dots, \bar{x}_N) \\
 	\label{eq:symmetryen}
 	&= v(2s-\bar{x}_1, -\bar{x}_2, \dots, \bar{x}_N) = \dots = 
 	v(2s-\bar{x}_1, \dots, -\bar{x}_N) \equiv v(\bar{y}_1, \dots, \bar{y}_N).
 	\end{align*}
 	Let us denote $c = \bar{x}-\bar{y}$ and consider the polarization of $v$ with respect to $H_c$:
 	\begin{equation}\label{polarization_c}
 	V_c(x) = \begin{cases}
 	\min(v(x),v(\rho_c(x))),	&x \in \Sigma_c,\\
 	\max(v(x),v(\rho_c(x))),	&x \in \mathbb{R}^N \setminus \Sigma_c.
 	\end{cases}
 	\end{equation}
 	Define a function $w(x) = V_c(\rho_c(x)) - V_c(x)$. We have $w \geqslant 0$ in $\Omega \cap \Sigma_c$ and $w=0$ on $\partial B_{R_0}(se_1) \cap \Sigma_c$. 
 	Using the linearization of the $p$-Laplacian as in the proof of Lemma~\ref{lem:mu<0}, we see that $w$ satisfies
 	\begin{equation*}\label{eq:oper}
 	\sum_{i,j=1}^{N} a_{ij}(x) \frac{\partial^2 w}{\partial x_i \partial x_j} + \sum_{i=1}^{N} b_i(x) \frac{\partial w}{\partial x_i} = f(V_c(\rho_c(x))) - f(V_c(x)) \geqslant 0
 	\end{equation*}
 	pointwise in $(B_{R_0 + \varepsilon}(se_1) \setminus \overline{B_{R_0}(se_1)}) \cap \Sigma_c$.
 	Hence, either $V_c(x) = V_c(\rho_c(x))$ for all $x \in (B_{R_0 + \varepsilon}(se_1) \setminus \overline{B_{R_0}(se_1)}) \cap \Sigma_c$, or $V_c(x) < V_c(\rho_c(x))$ for all $x \in (B_{R_0 + \varepsilon}(se_1) \setminus \overline{B_{R_0}(se_1)}) \cap \Sigma_c$.
 	However, we have simultaneously $V_c(\hat{x}) < V_c(\hat{y}) \equiv V_c(\rho_c(\hat{x}))$ and $V_c(\bar{x}) = V_c(\bar{y}) \equiv V_c(\rho_c(\bar{x}))$, which is impossible.
 	
 	Finally, considering the least $\varepsilon_0 > 0$ such that $|\nabla v(x)| = 0$ occurs for some $x \in \partial B_{R_0 + \varepsilon_0}(se_1)$, we obtain that $v$ is radial in the annulus $B_{R_0 + \varepsilon}(se_1) \setminus B_{R_0}(se_1)$ for any $\varepsilon \in (0, \varepsilon_0)$. Denoting $r_0 = R_0 + \varepsilon_0$ and referring to Remark~\ref{rem:reg2} below for the $C^2$-regularity, we finish the proof of the lemma.
 \end{proof}

Let us now outline how to prove symmetry results similar to Lemma~\ref{lem:rad1}, but in a neighborhood of the outer ball $B_{R_1}(0)$. 
Consider a diffeomorphism $\bar{\Phi}_t(x) = x + t \bar{R}(x)$, where the vector field $\bar{R}(x) = -\bar{\varrho}(x) e_1$ and $\bar{\varrho}$ is a smooth function equal to one in a neighborhood of $\partial B_{R_1}(0)$ and equal to zero in a neighborhood of $\partial B_{R_0}(se_1)$.  We see that
$$
\bar{\Phi}_t(B_{R_1}(0) \setminus \overline{B_{R_0}(se_1)}) = B_{R_1}(-te_1) \setminus \overline{B_{R_0}(se_1)}
$$
for all $|t|$ small enough. 
Taking into account the invariance of $\mu_+(\Omega)$ under translations of $\Omega$, we get 
$\mu_+(\bar{\Phi}_t(B_{R_1}(0) \setminus \overline{B_{R_0}(se_1)})) = \tilde{\mu}_+(s+t)$.
Therefore, similarly to \eqref{strange1}, we obtain the following upper estimate for $D\tilde{\mu}_+(s)$:
\begin{align}
\notag
D&\tilde{\mu}_+(s)
= \limsup_{t \to 0+} \frac{\tilde{\mu}_+(s+t) - \tilde{\mu}_+(s)}{t} \\
&\leqslant
\label{strange2}
\limsup_{t \to 0+} \frac{E[\alpha(v_t) v_t] - E[\alpha(v)v]}{t} = \left.\frac{\partial E[\alpha(v_t) v_t]}{\partial t} \right|_{t=0} = \frac {p-1} p \int_{\partial B_{R_1}(0)} \left | \frac{\partial v}{\partial n} \right |^p n_1 \,d\sigma.
\end{align}

Denote by $\bar{H}_{a}$ a hyperplane passing through the origin (center of the outer ball) perpendicularly to a vector $a \neq 0$ which satisfies $\left<a, e_1\right> \geqslant 0$.
Let $\bar{\rho}_a(x)$ be a reflection of $x \in \mathbb{R}^N$ with respect to $\bar{H}_a$, and $\bar{\Sigma}_a = \{x \in \mathbb{R}^N:\, \left<a,x\right> > 0\}$. Under the assumption on $a$, we have $\bar{\rho}_a(\Omega \cap \bar{\Sigma}_a) \subseteq \{x \in \Omega:\, \left<a,z\right> < 0\}$. 
Consider the corresponding polarization of a minimizer $v$ of $\tilde{\mu}_+(s)$:
\begin{equation*}\label{polarization_o}
	\bar{V}_a(x) = \begin{cases}
	\min(v(x),v(\bar{\rho}_a(x))),	&x \in \bar{\Sigma}_a,\\
	\max(v(x),v(\bar{\rho}_a(x))),	&x \in \mathbb{R}^N \setminus \bar{\Sigma}_a.
	\end{cases}
\end{equation*}
It is not hard to see that $\text{supp}\, \bar{V}_a = \overline{\Omega}$ and hence $\bar{V}_a \in \wolp(\Omega)$.

Arguing now along the same lines as in the proofs of Lemmas~\ref{lem:mu<0} and \ref{lem:rad1} with the use of \eqref{strange2} instead of \eqref{strange1}, polarizations $\bar{V}_{e_1}$ and $\bar{V}_c$ instead of \eqref{polarization_v} and \eqref{polarization_c}, respectively, and the linearization of the $p$-Laplacian in a neighborhood of $\partial B_{R_1}(0)$ instead of $\partial B_{R_0}(se_1)$, we obtain the following results. 
\begin{lemma}\label{lem:mu<01}
	Let $D\tilde{\mu}_+(s) = 0$ for some $s \in [0,R_1-R_0)$. Then for any minimizer $v$ of $\tilde{\mu}_+(s)$ there exists $\varepsilon_1 > 0$ such that $v(x) = v(\bar{\rho}_{e_1}(x))$ for all $x \in \partial B_{R_1-\varepsilon}(0)$ and $\varepsilon \in (0, \varepsilon_1)$. 
\end{lemma}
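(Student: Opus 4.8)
The plan is to run the proof of Lemma~\ref{lem:mu<0} essentially verbatim, with the inner sphere $\partial B_{R_0}(se_1)$ replaced by the outer sphere $\partial B_{R_1}(0)$, the reflection $\rho_{e_1}$ (through $se_1$) replaced by $\bar{\rho}_{e_1}$ (through the origin), the polarization $V$ replaced by $\bar{V}_{e_1}$, and the upper estimate \eqref{strange1} replaced by \eqref{strange2}. First I would fix an arbitrary minimizer $v$ of $\tilde{\mu}_+(s)$ and polarize it with respect to $\bar{H}_{e_1}$ to obtain $\bar{V}_{e_1}$. Because reflection through the origin pushes points away from the off-center hole, one has $\bar{\rho}_{e_1}(\Omega \cap \bar{\Sigma}_{e_1}) \subseteq \{x \in \Omega:\, x_1 < 0\}$, so $\bar{V}_{e_1} \in \wolp(\Omega)$ (as already noted, $\text{supp}\,\bar{V}_{e_1} = \overline{\Omega}$), it is nonnegative, and it preserves the three integrals $\int_\Omega |\nabla \cdot|^p$, $\int_\Omega \cdot\, f(\cdot)$, $\int_\Omega F(\cdot)$. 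Hence $\bar{V}_{e_1} \in \mathcal{N}(\Omega)$ is again a minimizer of $\tilde{\mu}_+(s)$, and \eqref{strange2} applies to it.

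Next I would record the signs at the outer boundary. There the outward normal is $n = x/R_1$, so $n_1(x) > 0$ exactly on $\partial B_{R_1}(0) \cap \bar{\Sigma}_{e_1}$ and $n_1(\bar{\rho}_{e_1}(x)) = -n_1(x)$; note this sign is opposite to the inner case, but it is compensated by the opposite ($+$) sign in front of the boundary integral in \eqref{strange2}. Since by construction $\bar{V}_{e_1}(x) \leqslant \bar{V}_{e_1}(\bar{\rho}_{e_1}(x))$ for $x \in \bar{\Sigma}_{e_1}$ while both vanish on $\partial B_{R_1}(0)$, comparing the (negative) outward normal derivatives of a smaller against a larger nonnegative function yields $\frac{\partial \bar{V}_{e_1}}{\partial n}(\bar{\rho}_{e_1}(x)) \leqslant \frac{\partial \bar{V}_{e_1}}{\partial n}(x) < 0$ on $\partial B_{R_1}(0) \cap \bar{\Sigma}_{e_1}$, the exact analogue of \eqref{domain_derivative2}. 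Pairing each $x$ with $\bar{\rho}_{e_1}(x)$ in \eqref{strange2} then reproduces $D\tilde{\mu}_+(s) \leqslant 0$ as in \eqref{domain_derivative1}.

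I would then use the hypothesis $D\tilde{\mu}_+(s) = 0$: equality throughout forces $\frac{\partial \bar{V}_{e_1}}{\partial n}(\bar{\rho}_{e_1}(x)) = \frac{\partial \bar{V}_{e_1}}{\partial n}(x) < 0$ on $\partial B_{R_1}(0) \cap \bar{\Sigma}_{e_1}$, the analogue of \eqref{eq:assumvv}. Setting $w(x) = \bar{V}_{e_1}(\bar{\rho}_{e_1}(x)) - \bar{V}_{e_1}(x) \geqslant 0$, which vanishes on $\partial B_{R_1}(0) \cap \bar{\Sigma}_{e_1}$, the Hopf principle at the outer boundary (Remark~\ref{rem:maximum}) furnishes $\varepsilon_1 > 0$ such that for each $\varepsilon \in (0,\varepsilon_1)$ one has $|\nabla v| > \eta$ on the collar $B_{R_1}(0) \setminus B_{R_1-\varepsilon}(0)$ for some $\eta>0$. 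There the $p$-Laplacian is nondegenerate, so by Remark~\ref{rem:reg2} the difference $\Delta_p \bar{V}_{e_1}(\bar{\rho}_{e_1}(\cdot)) - \Delta_p \bar{V}_{e_1}$ linearizes to a uniformly elliptic operator with bounded coefficients annihilating $w$ up to the sign-definite term $f(\bar{V}_{e_1}(\bar{\rho}_{e_1}(\cdot))) - f(\bar{V}_{e_1}) \geqslant 0$, exactly as in \eqref{eq:oper0}. The strong maximum principle and the Hopf lemma (\cite[Theorem~3.5 and Lemma~3.4]{giltrud}) then leave only two alternatives in $(B_{R_1}(0) \setminus \overline{B_{R_1-\varepsilon}(0)}) \cap \bar{\Sigma}_{e_1}$: either $w \equiv 0$, or $w > 0$ with the strict inequality $\frac{\partial \bar{V}_{e_1}}{\partial n}(\bar{\rho}_{e_1}(x)) < \frac{\partial \bar{V}_{e_1}}{\partial n}(x)$, which contradicts the forced equality. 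Hence $w \equiv 0$ on the collar, i.e.\ $\bar{V}_{e_1}(x) = \bar{V}_{e_1}(\bar{\rho}_{e_1}(x))$ there, and unwinding the polarization gives $v(x) = v(\bar{\rho}_{e_1}(x))$ on each $\partial B_{R_1-\varepsilon}(0)$, $\varepsilon \in (0,\varepsilon_1)$, as claimed.

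The point demanding the most care — the main obstacle — is the bookkeeping of signs: at the outer sphere $n_1$ has the opposite sign to the inner case and \eqref{strange2} carries a $+$ rather than the $-$ of \eqref{strange1}, so one must verify that these two reversals cancel and leave the inequality pointing the same way, $D\tilde{\mu}_+(s) \leqslant 0$. A secondary check is the geometric inclusion $\bar{\rho}_{e_1}(\Omega \cap \bar{\Sigma}_{e_1}) \subseteq \Omega$, which guarantees that polarizing across the \emph{central} hyperplane $\bar{H}_{e_1}$ (rather than one through $se_1$) still produces an admissible competitor; this holds because for $x_1 < 0$ one has $|x - se_1| > |x + se_1|$, so reflection through the origin only increases the distance to the hole $B_{R_0}(se_1)$. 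Everything else — the energy invariance of polarization, the Hopf-based nondegeneracy near the outer boundary, and the local strong maximum principle — transfers unchanged from Lemma~\ref{lem:mu<0}.
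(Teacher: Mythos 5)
Your proposal is correct and follows essentially the same route as the paper, which itself proves Lemma~\ref{lem:mu<01} by rerunning the argument of Lemma~\ref{lem:mu<0} with \eqref{strange2} in place of \eqref{strange1}, the polarization $\bar{V}_{e_1}$ in place of \eqref{polarization_v}, and the linearization of the $p$-Laplacian near $\partial B_{R_1}(0)$ instead of $\partial B_{R_0}(se_1)$. Your sign bookkeeping (the reversed sign of $n_1$ on the outer sphere cancelling against the $+$ sign in \eqref{strange2}) and the inclusion $\bar{\rho}_{e_1}(\Omega \cap \bar{\Sigma}_{e_1}) \subseteq \Omega$ are exactly the points the paper leaves implicit, and you verify them correctly.
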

\begin{lemma}\label{lem:rad2}
	Let $D\tilde{\mu}_+(s) = 0$ for some $s \in [0,R_1-R_0)$.
	Then for any axially symmetric (with respect to $e_1$) minimizer $v$ of $\tilde{\mu}_+(s)$ there is a ball $B_{r_1}(0)$ with $r_1 \in (R_0+s, R_1)$ such that $v$ is radial in the annulus $B_{R_1}(0) \setminus B_{r_1}(0)$. Moreover, $|\nabla v| = 0$ on $\partial B_{r_1}(0)$ and $v \in C^2(\overline{B_{R_1}(0)} \setminus \overline{B_{r_1}(0}))$.
\end{lemma}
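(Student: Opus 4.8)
The plan is to mirror exactly the strategy used for Lemma~\ref{lem:rad1} and Lemma~\ref{lem:mu<0}, but transplanted to the outer sphere $\partial B_{R_1}(0)$, relying on the upper estimate \eqref{strange2} in place of \eqref{strange1}. Since the authors have already set up the requisite apparatus (the diffeomorphism $\bar{\Phi}_t$, the reflection $\bar{\rho}_a$ with respect to the hyperplane $\bar{H}_a$ through the origin, the half-space $\bar{\Sigma}_a$, and the polarization $\bar{V}_a$), the proof is a near-verbatim adaptation. First I would fix an axially symmetric (with respect to $e_1$) minimizer $v$ of $\tilde{\mu}_+(s)$ under the hypothesis $D\tilde{\mu}_+(s)=0$. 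The Hopf maximum principle applied at the outer boundary $\partial B_{R_1}(0)$ (see Remark~\ref{rem:maximum}) guarantees that $|\nabla v| > \eta > 0$ in a collar $B_{R_1}(0) \setminus \overline{B_{R_1 - \varepsilon_1}(0)}$ for some $\varepsilon_1 > 0$, which is precisely the region where the $p$-Laplacian is uniformly elliptic and the local strong comparison principle is available.

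Next I would combine \eqref{strange2} with the invariance of the energy under the polarization $\bar{V}_{e_1}$ (the relevant identities for $\|\nabla v\|_p$, $\int v f(v)$, and $\int F(v)$ carry over from \cite[Corollary~5.1]{brocksol} and \cite[Lemma~2.2]{BartschWethWillem}) to conclude, exactly as in Lemma~\ref{lem:mu<01}, that $v(x) = v(\bar{\rho}_{e_1}(x))$ on spheres $\partial B_{R_1 - \varepsilon}(0)$ for all small $\varepsilon$. Then, to reach radiality, I would argue by contradiction: suppose there exist $\hat{x}, \hat{y} \in \partial B_{R_1 - \varepsilon}(0)$ with $v(\hat{x}) \neq v(\hat{y})$. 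As in Lemma~\ref{lem:rad1}, axial symmetry rules out $\hat{y} = \bar{\rho}_{e_i}(\hat{x})$ for $i \geq 2$, and Lemma~\ref{lem:mu<01} rules out $\hat{y} = \bar{\rho}_{e_1}(\hat{x})$. Choosing antipodal points $\bar{x}, \bar{y}$ on the diameter collinear with $\hat{x} - \hat{y}$ and forming the polarization $\bar{V}_c$ with $c = \bar{x} - \bar{y}$, the linearized operator (via Remark~\ref{rem:reg2}) yields that $w = \bar{V}_c(\bar{\rho}_c(\cdot)) - \bar{V}_c$ satisfies a uniformly elliptic inequality in the collar intersected with $\bar{\Sigma}_c$; the strong maximum principle then forces either $\bar{V}_c \equiv \bar{V}_c(\bar{\rho}_c(\cdot))$ throughout or strict inequality throughout. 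The simultaneous occurrence of $\bar{V}_c(\hat{x}) < \bar{V}_c(\bar{\rho}_c(\hat{x}))$ and $\bar{V}_c(\bar{x}) = \bar{V}_c(\bar{\rho}_c(\bar{x}))$ contradicts this dichotomy.

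Finally, letting $\varepsilon_1$ shrink to the largest value at which $|\nabla v|$ first vanishes on some outer sphere, I would set $r_1$ to be the inner radius of this maximal collar; radiality holds on $B_{R_1}(0) \setminus B_{r_1}(0)$ with $|\nabla v| = 0$ on $\partial B_{r_1}(0)$, and the $C^2$-regularity in the open annulus follows from Remark~\ref{rem:reg2} since $|\nabla v|$ stays bounded away from zero there. The principal subtlety, as the authors emphasize in Remark~\ref{rem:noscp}, is that no \emph{global} strong comparison principle is available for $p \neq 2$; the entire argument is therefore confined to the boundary collar where the Hopf principle keeps $|\nabla v|$ uniformly positive, so one must be careful that the linearization of \cite{fleckingertakac} and the resulting uniform ellipticity constants $C_1(\eta), C_2(\eta)$ hold on $\bar{\Sigma}_c$ near $\partial B_{R_1}(0)$ rather than near the inner sphere. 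A second point requiring attention is verifying $\operatorname{supp} \bar{V}_a = \overline{\Omega}$ so that $\bar{V}_a \in \wolp(\Omega)$ (already noted before the lemma statement), and checking that the constraint $r_1 \in (R_0 + s, R_1)$ is respected by the geometry of the reflections about $\bar{H}_c$.
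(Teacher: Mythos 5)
Your proposal is correct and coincides with the paper's own proof, which is exactly the adaptation you describe: the paper establishes Lemma~\ref{lem:rad2} by repeating the arguments of Lemmas~\ref{lem:mu<0} and \ref{lem:rad1} with \eqref{strange2} in place of \eqref{strange1}, the polarizations $\bar{V}_{e_1}$ and $\bar{V}_c$ in place of \eqref{polarization_v} and \eqref{polarization_c}, and linearization of the $p$-Laplacian in a collar of $\partial B_{R_1}(0)$ (where the Hopf principle keeps $|\nabla v|$ bounded away from zero) instead of $\partial B_{R_0}(se_1)$. The care points you flag --- the support property $\mathrm{supp}\, \bar{V}_a = \overline{\Omega}$, the ellipticity constants in the outer collar, and the constraint $r_1 \in (R_0+s, R_1)$ --- are precisely the details the paper leaves implicit in this adaptation.
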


Now we are ready to prove the main result which implies strict monotonicity of $\tilde{\mu}_+(s)$ on $[0, R_1-R_0)$, that is, the second part of Theorem~\ref{thm:2}.
\begin{proposition}
	$D\tilde{\mu}_+(s) < 0$ for all $s \in (0, R_1-R_0)$.
\end{proposition}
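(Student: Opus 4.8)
The plan is to argue by contradiction. Suppose $D\tilde{\mu}_+(s) = 0$ for some $s \in (0, R_1-R_0)$. By Lemma~\ref{lem:sphericallysym} I may fix a minimizer $v$ of $\tilde{\mu}_+(s)$ that is invariant under rotations about the $e_1$-axis, so that both Lemmas~\ref{lem:rad1} and~\ref{lem:rad2} apply. They furnish radii $r_0 \in (R_0, R_1-s)$ and $r_1 \in (R_0+s, R_1)$ such that $v$ is radial about $se_1$ in the inner collar $B_{r_0}(se_1) \setminus B_{R_0}(se_1)$ and radial about the origin in the outer collar $B_{R_1}(0) \setminus B_{r_1}(0)$, with $|\nabla v| = 0$ on the critical spheres $\partial B_{r_0}(se_1)$, $\partial B_{r_1}(0)$, on each of which $v$ is therefore a positive constant. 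Since $s>0$, the two centres $se_1$ and $0$ are distinct, and the entire contradiction must be squeezed out of the incompatibility of these two radial pictures.

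The geometric heart of the argument is that a spherical cap centred at $se_1$ cannot sit inside a level set of a function radial about the origin, because a level set there is a piece of a sphere $\partial B_c(0)$ and two distinct spheres meet in at most an $(N-2)$-dimensional set, never an open cap. Concretely, if the outermost point $(s+r_0)e_1$ of $\partial B_{r_0}(se_1)$ has distance from the origin exceeding $r_1$, then a cap of $\partial B_{r_0}(se_1)$ enters the outer collar; there $v$ equals the constant from $\partial B_{r_0}(se_1)$ while simultaneously $v(x)=\psi(|x|)$ for the outer radial profile $\psi$, which forces $\psi$ to be constant on an interval of radii. This makes $\nabla v\equiv 0$ on an open subset of the collar, whence the equation gives $f(\text{const})=0$, contradicting $(A_3)$. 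The same inequality $s+r_0>r_1$ makes a cap of $\partial B_{r_1}(0)$ enter the inner collar and yields the symmetric contradiction. Thus the only regime to handle is $s+r_0\leqslant r_1$, in which $\overline{B_{r_0}(se_1)}$ is contained in $\overline{B_{r_1}(0)}$ and the two collars are separated by a genuine intermediate region.

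To close this remaining case I would try to enlarge the radial collars until they meet, returning to the configuration above. Using the reflection symmetries about $\{x_1=s\}$ and $\{x_1=0\}$ supplied by Lemmas~\ref{lem:mu<0} and~\ref{lem:mu<01}, together with the axial symmetry, one propagates radiality inward from the inner boundary and outward from the outer boundary, the polarization and strong-maximum-principle mechanism of Lemma~\ref{lem:rad1} being available wherever $|\nabla v|>0$. The goal is to show that the maximal inner and outer radial regions cannot remain separated, so that some cap of a sphere about one centre must eventually intrude into the region radial about the other.

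The main obstacle is precisely this separating region $\overline{B_{r_1}(0)}\setminus\overline{B_{r_0}(se_1)}$. On the critical spheres $\nabla v=0$, so the $p$-Laplacian degenerates there and the linearization underlying Lemma~\ref{lem:rad1} breaks down exactly at the boundary of each collar; moreover, for $p\neq 2$ there is no global strong comparison principle (cf.\ Remark~\ref{rem:noscp}) to push radiality across the degenerate set. Controlling $v$ in this intermediate region, where neither radial structure is known a priori and the gradient may vanish, is where the real difficulty lies, and I expect it to demand a careful continuity and connectedness argument on the open set $\{|\nabla v|>0\}$ rather than any global comparison result.
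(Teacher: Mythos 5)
Your reduction to the two cases is sound, and your treatment of the overlap case $s+r_0>r_1$ is correct; it even addresses a point the paper passes over silently, since the three-region decomposition of $\Omega$ used there tacitly assumes $\overline{B_{r_0}(se_1)}\subseteq\overline{B_{r_1}(0)}$. (An even quicker way to rule out the overlap: the point $(s+r_0)e_1$ lies on $\partial B_{r_0}(se_1)$, where $\nabla v=0$ by Lemma~\ref{lem:rad1}, yet it would lie in the open outer collar, where $|\nabla v|>0$ by the construction in Lemma~\ref{lem:rad2}.) The genuine gap is the main case $s+r_0\leqslant r_1$: there you offer only a plan --- ``enlarge the radial collars until they meet'' --- and you yourself explain why it cannot be executed with the available tools: the polarization/linearization mechanism of Lemmas~\ref{lem:mu<0} and \ref{lem:rad1} requires $|\nabla v|\geqslant\eta>0$, which fails exactly on the critical spheres, and no strong comparison principle is available for general $p$ to push radiality across the degenerate set. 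So the core of the proposition is left unproved.

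The idea you are missing is that one never needs to control $v$ in the intermediate region $A:=B_{r_1}(0)\setminus\overline{B_{r_0}(se_1)}$ at all; the contradiction is extracted by an energy (test-function) argument. Since $v$ is $C^2$ up to the critical spheres in each collar and $|\nabla v|=0$ there, multiplying \eqref{D} by $v$ and integrating by parts over each collar kills the boundary terms and yields the Nehari identities \eqref{eq:neh_v1} and \eqref{eq:neh_v3}; subtracting these from the global constraint $v\in\mathcal{N}(\Omega)$ gives the same identity \eqref{eq:neh_v2} on $A$. Now define $w=C_1v$ on the outer collar, $w=C_2$ (a constant) on $A$, and $w=v$ on the inner collar, with $C_1,C_2>0$ chosen so that $w$ is continuous; since the gradients vanish on both interfaces, $w\in C^1(\overline{\Omega})$. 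By Lemma~\ref{lem:superlin} there is $\alpha(w)>0$ with $\alpha(w)w\in\mathcal{N}(\Omega)$, hence $\tilde{\mu}_+(s)\leqslant E[\alpha(w)w]$. Writing $E_1,E_2,E_3$ for the restrictions of $E$ to the three regions, Lemma~\ref{lem:superlin2} together with the collar identities gives $E_1[\alpha(w)C_1v]\leqslant E_1[v]$ and $E_3[\alpha(w)v]\leqslant E_3[v]$, and identity \eqref{eq:neh_v2} gives $E_2[v]>0$; on the other hand $E_2[\alpha(w)C_2]=-\int_A F(\alpha(w)C_2)\,dx<0$, because the gradient term vanishes and $A$ is a nonempty open set (here, and only here, $s>0$ is used). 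Summing, $E[\alpha(w)w]<E_1[v]+E_2[v]+E_3[v]=E[v]=\tilde{\mu}_+(s)$, a contradiction. This ``replace the middle of $v$ by a constant and rescale'' construction is precisely the step your proposal lacks, and it sidesteps the degeneracy issues you identified rather than confronting them.
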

\begin{proof}
Suppose, by contradiction, that $D\tilde{\mu}_+(s) = 0$ for some $s \in (0, R_1-R_0)$. Let $v$ be an axially symmetric minimizer of $\tilde{\mu}_+(s)$ given by Lemma~\ref{lem:sphericallysym}.
From Lemmas~\ref{lem:rad1} and \ref{lem:rad2} we know that there exist $r_0$ and $r_1$ such that 
$v \in C^2(\overline{B_{R_1}(0)} \setminus B_{r_1+\varepsilon}(0))$ and $v \in C^2(\overline{B_{r_0-\varepsilon}(se_1)} \setminus B_{R_0}(se_1))$ for any sufficiently small $\varepsilon > 0$, and hence $v$ satisfies \eqref{D} pointwise in the corresponding domains. 
Let us multiply \eqref{D} by $v$, integrate it over $B_{R_1}(0) \setminus \overline{B_{r_1+\varepsilon}(0)}$ and tend $\varepsilon \to 0$. We get
\begin{equation}\label{eq:neh_v1}
\int_{B_{R_1}(0) \setminus \overline{B_{r_1}(0)}} |\nabla v|^p \, dx - \int_{B_{R_1}(0) \setminus \overline{B_{r_1}(0)}} v f(v) \, dx = 0,
\end{equation}
since $v = 0$ on $\partial B_{R_1}(0)$ and $|\nabla v| \to 0$ on $\partial B_{r_1+\varepsilon}(0)$ as $\varepsilon \to 0$, due to Lemma~\ref{lem:rad2}. Analogously, we obtain
\begin{equation}\label{eq:neh_v3}
\int_{B_{r_0}(se_1) \setminus \overline{B_{R_0}(se_1)}} |\nabla v|^p \, dx - \int_{B_{r_0}(se_1) \setminus \overline{B_{R_0}(se_1)}} v f(v) \, dx = 0.
\end{equation}
Note that 
\begin{equation*}
\int_{\Omega} |\nabla v|^p \, dx 
= \int_{B_{R_1}(0) \setminus \overline{B_{r_1}(0)}} |\nabla v|^p \, dx + 
\int_{B_{r_1}(0) \setminus \overline{B_{r_0}(se_1)}} |\nabla v|^p \, dx + 
\int_{B_{r_0}(se_1) \setminus \overline{B_{R_0}(se_1)}} |\nabla v|^p \, dx,
\end{equation*}
and similar decompositions hold for $\int_{\Omega} v f(v) \, dx$ and $\int_{\Omega} F(v) \, dx$. 
Therefore, recalling that $v \in \mathcal{N}(\Omega)$ and using \eqref{eq:neh_v1} and \eqref{eq:neh_v3}, we derive also 
\begin{equation}\label{eq:neh_v2}
\int_{B_{r_1}(0) \setminus \overline{B_{r_0}(se_1)}} |\nabla v|^p \, dx - \int_{B_{r_1}(0) \setminus \overline{B_{r_0}(se_1)}} v f(v) \, dx = 0.
\end{equation}
In other words, $v$ satisfies the Nehari constraint over each of the domains 
$$
B_{R_1}(0) \setminus \overline{B_{r_1}(0)}, \quad B_{r_1}(0) \setminus \overline{B_{r_0}(se_1)}, \quad  B_{r_0}(se_1) \setminus \overline{B_{R_0}(se_1)}.
$$
Let us consider a function $w: \Omega \to \mathbb{R}$ defined by
$$
w(x) = 
\left\{
\begin{aligned}
&C_1 v (x)	&&x \in B_{R_1}(0) \setminus \overline{B_{r_1}(0)},\\
&C_2,		&&x \in B_{r_1}(0) \setminus \overline{B_{r_0}(se_1)},\\
&v(x), 		&&x \in B_{r_0}(se_1) \setminus \overline{B_{R_0}(se_1)},
\end{aligned}
\right.
$$
where constants $C_1, C_2 > 0$ are chosen such that $C_1 v|_{\partial B_{r_1}(0)} = C_2 = v|_{\partial B_{r_0}(se_1)}$. (Note that $v$ is constant on $\partial B_{r_1}(0)$ and $\partial B_{r_0}(se_1)$ due to Lemmas~\ref{lem:rad1} and \ref{lem:rad2}, respectively.)
Therefore, $w > 0$ in $\Omega$, $w \in C^{1}(\overline{\Omega})$, and there exists a unique $\alpha(w) > 0$ such that $\alpha(w) w \in \mathcal{N}(\Omega)$ and $E[\alpha w]$ achieves a global maximum with respect to $\alpha > 0$ at $\alpha(w)$, see Lemma~\ref{lem:superlin}.
Thus, we have
\begin{equation}\label{eq:mu<E}
\tilde{\mu}_+(s) \leqslant E[\alpha(w) w] = E_1[\alpha(w)C_1 v] + E_2[\alpha(w)C_2] + E_3[\alpha(w)v],
\end{equation}
where $E_1$ denotes a restriction of $E$ to the domain of integration $B_{R_1}(0) \setminus \overline{B_{r_1}(0)}$, etc.

Recalling \eqref{eq:neh_v1} and \eqref{eq:neh_v3}, Lemma~\ref{lem:superlin2} implies that $E_1[\alpha(w)C_1 v] \leqslant E_1[v]$ and $E_3[\alpha(w)v] \leqslant E_3[v]$. 
Moreover, since $v > 0$ in $\Omega$, from \eqref{eq:neh_v2} we get $E_2[v] > 0$ by Lemma~\ref{lem:superlin2}. 
However, 
\begin{align*}
E_2[\alpha(w)C_2] 
&= \int_{B_{r_1}(0) \setminus B_{r_0}(se_1)} |\nabla \alpha(w)C_2|^p \,dx - \int_{B_{r_1}(0) \setminus B_{r_0}(se_1)} F(\alpha(w)C_2) \, dx \\
&= - \int_{B_{r_1}(0) \setminus B_{r_0}(se_1)} F(\alpha(w)C_2) \, dx < 0.
\end{align*}
Thus, 
$$
E_1[\alpha(w)C_1 v] + E_2[\alpha(w)C_2] + E_3[\alpha(w)v] < E_1[v] + E_2[v] + E_3[v] = E[v] = \tilde{\mu}_+(s),
$$
and we get a contradiction with \eqref{eq:mu<E}.
\end{proof}

\section{Nonradiality of least energy nodal solutions}\label{sec:nonradial}
In this section we prove Theorem~\ref{thm:3}, that is, we show that any least energy  nodal solution of problem \eqref{D} in a ball or annulus is nonradial. 
First, we treat the case of a ball. Consider the problem 
\begin{equation}\label{Dn}
\left\{
\begin{aligned}
-\Delta_p u &= f(u) 
&&\text{in } B_R(0), \\
u &= 0  &&\text{on } \partial B_R(0),
\end{aligned}
\right.
\end{equation}	
where $B_R(0)$ is the open ball with some radius $R$ centered at the origin, and $f$ satisfies $(A_1)-(A_4)$.
Recall that any least energy nodal solution of \eqref{Dn} is a minimizer of
\begin{equation*}
\nu = \min_{u \in \mathcal{M}} E[u],
\end{equation*}
where $\mathcal{M}$ is the nodal Nehari set \eqref{def:nodalNehari}.

Suppose, by contradiction, that there exists a minimizer $u$ of $\nu$ which is radial. Hence, there exists $r \in (0,R)$ such that, without loss of generality, $u^+$ is a least energy positive solution of \eqref{D} in the annulus $B_R(0) \setminus \overline{B_r(0)}$ and $-u^-$ is a least energy negative solution of \eqref{D} in the ball $B_r(0)$. 
As in Section~\ref{sec:optimization}, let us perturb $B_R(0) \setminus \overline{B_r(0)}$ by shifting the inner ball in direction $e_1$. 
From Lemma~\ref{lem:existence} and Remark~\ref{rem:Omega_t} we know that $\mu_+(B_R(0) \setminus \overline{B_r(se_1)})$ possesses a minimizer $v_s$ for any $s \geqslant 0$ small enough, and 
\begin{equation}\label{eq:nonrad1}
\mu_+(B_R(0) \setminus \overline{B_r(se_1)}) < \mu_+(B_R(0) \setminus \overline{B_r(0)})
\end{equation}
by  Proposition~\ref{prop:annulus}.
Extending $v_s$ by zero outside of $B_R(0) \setminus \overline{B_r(se_1)}$, we get $v_s \in \mathcal{N}(B_R(0) \setminus \overline{B_r(se_1)}) \subset \mathcal{N}(B_R(0))$.
On the other hand, it is not hard to see that the translation $-u^-(\cdot - se_1) \in \mathcal{N}(B_R(0))$ and $E[-u^-(\cdot - se_1)] = E[-u^-]$.
Therefore, if we consider a function $U_s$ defined as $U_s(x) = v_s(x) - u^-(x - se_1)$, $x \in B_R(0)$, then $U_s \in \mathcal{M}$. 
But 
$$
\nu \leqslant E[U_s] = E[v_s - u^-(\cdot - se_1)] = E[v_s] + E[- u^-(\cdot - se_1)] < E[u^+] + E[- u^-] = E[u] = \nu
$$
in view of \eqref{eq:nonrad1}, which is impossible.

Consider now problem \eqref{D} in some annulus $B_{R_1}(0) \setminus \overline{B_{R_0}(0)}$. 
Suppose that this problem possesses a least energy nodal solution $u$ which is radial.
Hence, there exists $r \in (R_0,R_1)$ such that, without loss of generality, $u^+$ is a least energy positive solution of \eqref{D} in $B_{R_1}(0) \setminus \overline{B_r(0)}$ and $-u^-$ is a least energy negative solution of \eqref{D} in $B_r(0) \setminus \overline{B_{R_0}(0)}$. 
Shifting $B_r(0)$ along $e_1$ on a sufficiently small distance $s \geqslant 0$, we get a contradiction as above. Indeed, Proposition~\ref{prop:annulus}, together with the invariance of \eqref{D} upon orthogonal transformations of coordinates, implies that 
\begin{equation*}
\mu_+(B_{R_1}(0) \setminus \overline{B_r(se_1)}) < \mu_+(B_{R_1}(0) \setminus \overline{B_r(0)})
~\text{and}~
\mu_-(B_r(se_1) \setminus \overline{B_{R_0}(0)}) < \mu_-(B_{r}(0) \setminus \overline{B_{R_0}(0)}),
\end{equation*}
and corresponding minimizers generate a function from $\mathcal{M}$ which energy is strictly less than $\nu$. 
A contradiction.

\par
\bigskip
\noindent
{\bf Acknowledgments.} 
The first author was supported by the project LO1506 of the Czech Ministry of Education, Youth and Sports.
The second author was supported by the grant 17-01-00678 of Russian Foundation for Basic Research.
The second author wishes to thank the University of West Bohemia, where this research was started, for the invitation and hospitality.
The authors would like to thank A.I.\ Nazarov for stimulating discussions and valuable advices.

\appendix
\section{Appendix}\label{section:appendix}
For readers' convenience, in this section we sketchily show that the minimization problems $\mu_\pm(\Omega)$ and $\nu$ given by \eqref{mu:Nehari} and \eqref{nu} possess minimizers which are least energy constant-sign and nodal solutions of \eqref{D}, respectively.  
(Note that the existence of ``abstract'' constant-sign and nodal solutions for problems of the type \eqref{D} is known under much weaker assumptions on $f$, see, for instance, \cite{dinca,liuwang}. However, we are interested in solutions with the least energy property.) 
Throughout this section, we always assume that $(A_1)-(A_4)$ are fulfilled. 

First we need the following result about the geometry of the functional $E$.
\begin{lemma}\label{lem:superlin}
	Let $u \in \wolp(\Omega) \setminus \{0\}$. Then there exists a unique constant $\alpha(u) \in (0, +\infty)$ such that $\alpha(u) u \in \mathcal{N}(\Omega)$. 
	Moreover, $E[\alpha(u)u] = \max\limits_{\alpha > 0}E[\alpha u] > 0$.
\end{lemma}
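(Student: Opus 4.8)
The plan is to study the \emph{fibering map} $\phi(\alpha) := E[\alpha u]$ for $\alpha > 0$ and to reduce the statement to single-variable calculus. Writing $A := \int_\Omega |\nabla u|^p\,dx > 0$ (positive since $u \not\equiv 0$), I would record
\[
\phi(\alpha) = \frac{\alpha^p}{p}\, A - \int_\Omega F(\alpha u)\,dx,
\qquad
\phi'(\alpha) = \alpha^{p-1} A - \int_\Omega u\,f(\alpha u)\,dx = \frac{1}{\alpha}\,E'[\alpha u](\alpha u).
\]
Hence, for $\alpha > 0$, the condition $\alpha u \in \mathcal{N}(\Omega)$ is \emph{equivalent} to $\phi'(\alpha) = 0$, and the identity $E[\alpha(u)u] = \max_{\alpha>0} E[\alpha u]$ amounts to showing that $\phi$ has a unique critical point in $(0,+\infty)$ which is a global maximum. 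First I would note that $\phi \in C^1(0,+\infty)$ with $\phi(0^+) = 0$, the differentiation under the integral being justified by $(A_1)$ and the growth bound $(A_2)$.

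The core is a monotonicity statement. I would divide by $\alpha^{p-1}$ and consider
\[
g(\alpha) := \frac{\phi'(\alpha)}{\alpha^{p-1}} = A - \int_\Omega \frac{u\,f(\alpha u)}{\alpha^{p-1}}\,dx,
\]
which shares the same positive zeros as $\phi'$. For fixed $x$ with $u(x)\neq 0$, setting $m(\tau) := f(\tau)/(|\tau|^{p-2}\tau)$ one verifies the pointwise identity $\frac{u(x)\,f(\alpha u(x))}{\alpha^{p-1}} = |u(x)|^p\, m(\alpha u(x))$. The first inequality in $(A_3)$ is precisely the statement that $m'(\tau) = (f'(\tau)\tau - (p-1)f(\tau))/|\tau|^p$ has the sign of $\tau$, so $m$ is strictly increasing for $\tau>0$ and strictly decreasing for $\tau<0$; in either case $\alpha\mapsto m(\alpha u(x))$ is strictly increasing. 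Therefore the integrand increases strictly in $\alpha$ on the positive-measure set $\{u\neq 0\}$, making $g$ strictly decreasing on $(0,+\infty)$. This already gives uniqueness, since $g$ (hence $\phi'$) vanishes at most once.

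It remains to fix the two endpoint behaviours and force exactly one sign change. As $\alpha\to 0^+$, the second part of $(A_3)$ yields $\tau f(\tau)\leqslant (\lambda_p(\Omega)-\varepsilon)|\tau|^p$ for $|\tau|$ small and some $\varepsilon>0$; combined with $A \geqslant \lambda_p(\Omega)\int_\Omega |u|^p\,dx$ from \eqref{mu:lambda} (and $(A_2)$ to dominate the contribution of the shrinking set $\{|\alpha u|>\delta\}$), this gives $\liminf_{\alpha\to 0^+} g(\alpha) \geqslant \varepsilon\int_\Omega |u|^p\,dx > 0$, so $\phi'>0$ near $0$. As $\alpha\to+\infty$, integrating $(A_4)$ gives the superlinear bound $F(\tau)\geqslant c|\tau|^\theta - C$ with $\theta>p$, whence $\phi(\alpha)\to-\infty$, and the same $(A_4)$ estimate applied via $\tau f(\tau)\geqslant \theta F(\tau)$ forces $g(\alpha)<0$ for large $\alpha$. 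By continuity and strict monotonicity of $g$ there is a unique $\alpha(u)\in(0,+\infty)$ with $\phi'(\alpha(u))=0$; since $\phi'>0$ before and $\phi'<0$ after, $\alpha(u)$ is the global maximiser, and because $\phi$ strictly increases from $\phi(0^+)=0$, its maximal value satisfies $E[\alpha(u)u]=\phi(\alpha(u))>0$.

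The step I expect to be the main obstacle is the sign bookkeeping in the monotonicity claim: checking that $\alpha\mapsto m(\alpha u(x))$ is increasing for \emph{both} signs of $u(x)$ (the map $m$ being increasing on one half-line and decreasing on the other), and then converting the $\limsup$ hypothesis in $(A_3)$ into the clean two-sided bound $\tau f(\tau)\leqslant (\lambda_p(\Omega)-\varepsilon)|\tau|^p$ near the origin together with the truncation argument controlling $\{|\alpha u|>\delta\}$. Everything else is routine once these sign and integrability issues are handled.
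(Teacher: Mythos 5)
Your proposal is correct and follows essentially the same route as the paper: both analyze the fibering map $\alpha \mapsto E[\alpha u]$ and derive uniqueness of the critical point (and that it is a positive global maximum) from the strict monotonicity of $s \mapsto f(s)/(|s|^{p-2}s)$ encoded in the first part of $(A_3)$. The only difference is in the endpoint behaviour needed for existence: the paper simply cites \cite{dinca} (Theorem~17 and Proposition~7) for $E[\alpha u]>0$ near $\alpha=0$ and $E[\alpha u]<0$ for large $\alpha$, whereas you prove the corresponding facts directly from the second part of $(A_3)$ with the Poincar\'e-type inequality \eqref{mu:lambda} and from integrating $(A_4)$ — a self-contained filling-in of the same estimates rather than a different method.
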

\begin{proof}
	Define $Q: (0,+\infty) \to \mathbb{R}$ by $Q(\alpha) = E[\alpha u]$. Differentiating $Q$, we get
	\begin{equation*}\label{eq:Phi'}
	Q'(\alpha) = \alpha^{p-1}\left[\int_{\Omega} |\nabla u|^p \, dx - \int_{\Omega} |u|^p \frac{f(\alpha u)}{|\alpha u|^{p-2} \alpha u} \, dx \right].
	\end{equation*}
	Assume that there exists a critical point $\alpha_1 > 0$ of $Q$, i.e., $Q'(\alpha_1)=0$. Evidently, $\alpha_1 u \in \mathcal{N}(\Omega)$. 
	From the first part of $(A_3)$ we deduce that $\frac{f(s)}{|s|^{p-2}s}$ is strictly decreasing for $s<0$ and strictly increasing for $s > 0$. This implies that $Q'(\alpha) > 0$ for all $\alpha \in (0, \alpha_1)$, and $Q'(\alpha) < 0$ for all $\alpha > \alpha_1$. 
	Thus, any possible critical point of $Q$ on $(0, +\infty)$ is a point of a strict local maximum, and hence $Q$ has at most one critical point on $(0, +\infty)$. Moreover, since $Q(0) = 0$, we get $Q(\alpha_1) > 0$.
	
	Let us show that a critical point exists. 
	In view of $(A_1)-(A_3)$, we apply \cite[Theorem~17]{dinca} to deduce that $Q(\alpha) > 0$ for some $\alpha>0$ small enough. 
	On the other hand, due to $(A_1)$, $(A_2)$, and $(A_4)$, \cite[Proposition~7]{dinca} implies that $Q(\alpha) < 0$ for $\alpha > 0$ large enough. Therefore, since $Q(0) = 0$, there exists a positive zero of $Q'$.
\end{proof}

Arguing as in the first part of the proof of Lemma~\ref{lem:superlin}, we deduce the following fact.
\begin{lemma}\label{lem:superlin2}
	Let $u \in \wolp(\Omega)$ and let $\Omega_1$ be a subdomain of $\Omega$. 
	If $u \not\equiv 0$ a.e.\ in $\Omega_1$ and
	$$
	\int_{\Omega_1} |\nabla u|^p \, dx - \int_{\Omega_1} u f(u) \, dx = 0,
	$$
	then 
	$$
	\frac{1}{p}\int_{\Omega_1} |\nabla u|^p \, dx - \int_{\Omega_1} F(u) \, dx = 
	\max\limits_{\alpha>0}\left( \frac{1}{p}\int_{\Omega_1} |\nabla (\alpha u)|^p \, dx - \int_{\Omega_1} F(\alpha u) \, dx \right) > 0.
	$$
\end{lemma}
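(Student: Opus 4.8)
The plan is to mimic the first (monotonicity) part of the proof of Lemma~\ref{lem:superlin}, now with all integrals restricted to $\Omega_1$ and with the simplification that a critical point is already supplied by the hypothesis. First I would introduce the restricted energy
$$
Q_1(\alpha) := \frac{1}{p}\int_{\Omega_1} |\nabla (\alpha u)|^p \, dx - \int_{\Omega_1} F(\alpha u) \, dx, \qquad \alpha > 0,
$$
and compute, exactly as for $Q$ in Lemma~\ref{lem:superlin},
$$
Q_1'(\alpha) = \alpha^{p-1}\left[\int_{\Omega_1} |\nabla u|^p \, dx - \int_{\Omega_1} |u|^p \frac{f(\alpha u)}{|\alpha u|^{p-2}\alpha u} \, dx\right].
$$
The assumption $\int_{\Omega_1}|\nabla u|^p\,dx = \int_{\Omega_1} u f(u)\,dx$ says precisely that $Q_1'(1) = 0$, so $\alpha = 1$ is a critical point of $Q_1$; this is the only place where the hypothesis enters.

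Next I would invoke the strict monotonicity furnished by $(A_3)$. For every $x$ with $u(x) \neq 0$ the map $\alpha \mapsto \frac{f(\alpha u(x))}{|\alpha u(x)|^{p-2}\alpha u(x)}$ is strictly increasing: this is immediate when $u(x) > 0$, since $\frac{f(s)}{|s|^{p-2}s}$ is strictly increasing for $s>0$; and when $u(x) < 0$ it follows because $\alpha u(x)$ decreases as $\alpha$ grows, while $\frac{f(s)}{|s|^{p-2}s}$ is strictly decreasing for $s<0$. Since $u \not\equiv 0$ a.e.\ in $\Omega_1$, the set $\{u \neq 0\}$ has positive measure, so $\alpha \mapsto \int_{\Omega_1}|u|^p \frac{f(\alpha u)}{|\alpha u|^{p-2}\alpha u}\,dx$ is strictly increasing, and hence the bracket above is strictly decreasing. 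Consequently $Q_1'(\alpha) > 0$ on $(0,1)$ and $Q_1'(\alpha) < 0$ on $(1,+\infty)$, so $\alpha=1$ is the unique critical point and a strict global maximum of $Q_1$ on $(0,+\infty)$, which is exactly the maximality asserted in the statement.

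Finally, to obtain strict positivity I would note that $Q_1(\alpha) \to 0$ as $\alpha \to 0^+$: the gradient term scales like $\alpha^p$, and $\int_{\Omega_1} F(\alpha u)\,dx \to 0$ by dominated convergence, using the bound $|F(s)| \leqslant C(|s|^q/q + |s|)$ deduced from $(A_2)$ together with $u \in L^q(\Omega)$. Combined with $Q_1' > 0$ on $(0,1)$, this yields $Q_1(1) > \lim_{\alpha\to 0^+}Q_1(\alpha) = 0$, completing the argument. I do not anticipate a serious obstacle; the one point that must not be copied verbatim from Lemma~\ref{lem:superlin}, and where a little care is needed, is that $u$ may change sign on $\Omega_1$, so the monotonicity of $\alpha \mapsto f(\alpha u)/(|\alpha u|^{p-2}\alpha u)$ has to be verified separately on $\{u>0\}$ and $\{u<0\}$, each case reducing to one of the two monotonicity statements packaged in $(A_3)$.
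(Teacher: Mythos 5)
Your proof is correct and is essentially the paper's own argument: the paper proves this lemma simply by invoking the first (monotonicity) part of the proof of Lemma~\ref{lem:superlin}, which is exactly what you reconstruct, with the hypothesis supplying the critical point $\alpha=1$ of the restricted functional. Your explicit verification of the monotonicity of $\alpha \mapsto f(\alpha u)/(|\alpha u|^{p-2}\alpha u)$ separately on $\{u>0\}$ and $\{u<0\}$ is a point the paper leaves implicit, but it is the same mechanism packaged in $(A_3)$.
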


\medskip
\begin{lemma}\label{lem:existence}
	There exist minimizers of $\mu_\pm(\Omega)$ and $\nu$.
\end{lemma}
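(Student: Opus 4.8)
The plan is to apply the direct method of the calculus of variations to each constrained problem, using the fibering structure supplied by Lemma~\ref{lem:superlin} to push weak limits back onto the Nehari manifold. I begin with $\mu_+(\Omega)$; the case $\mu_-(\Omega)$ is identical after replacing $v$ by $-v$. Fix a minimizing sequence $\{v_k\} \subset \mathcal{N}(\Omega)$ with $v_k \geqslant 0$ and $E[v_k] \to \mu_+(\Omega)$. The first step is a uniform bound on $\|\nabla v_k\|_{L^p(\Omega)}$: on $\mathcal{N}(\Omega)$ one has $\int_\Omega |\nabla v_k|^p = \int_\Omega v_k f(v_k)$, so $(A_4)$ together with $(A_2)$ gives $E[v_k] \geqslant (\frac1p - \frac1\theta)\int_\Omega |\nabla v_k|^p - C_1$ for some $C_1$ independent of $k$ (this is exactly the estimate already carried out in the proof of Proposition~\ref{proof:contin}), and since $\theta > p$ the boundedness of $E[v_k]$ forces boundedness of $\{v_k\}$ in $\wolp(\Omega)$.

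By reflexivity and the compactness of the embedding $\wolp(\Omega) \hookrightarrow L^q(\Omega)$ for $q \in (p,p^*)$, I may pass to a subsequence with $v_k \rightharpoonup v$ weakly in $\wolp(\Omega)$, strongly in $L^q(\Omega)$, and a.e.\ in $\Omega$; in particular $v \geqslant 0$. Next I must rule out $v \equiv 0$. Using the second part of $(A_3)$ and $(A_2)$ one picks $\mu < \lambda_p(\Omega)$ and $C_2$ with $|f(s)| \leqslant \mu|s|^{p-1} + C_2|s|^{q-1}$, whence the Nehari identity and the Poincaré-type inequality \eqref{mu:lambda} yield $(1 - \mu/\lambda_p(\Omega))\|\nabla v_k\|_{L^p}^p \leqslant C_2 \|v_k\|_{L^q}^q$; combined with the Sobolev inequality and $q > p$ this produces a uniform lower bound $\|\nabla v_k\|_{L^p} \geqslant c > 0$, and hence $\|v_k\|_{L^q} \geqslant c' > 0$. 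Strong $L^q$ convergence then gives $\|v\|_{L^q} \geqslant c' > 0$, so $v \not\equiv 0$. Now $(A_2)$ and strong $L^q$ convergence let me pass to the limit in the lower-order term $\int_\Omega F(\alpha v_k) \to \int_\Omega F(\alpha v)$, so for every fixed $\alpha > 0$ weak lower semicontinuity of the Dirichlet energy yields $E[\alpha v] \leqslant \liminf_k E[\alpha v_k]$. Applying this with $\alpha = \alpha(v)$ from Lemma~\ref{lem:superlin}, and using that $\alpha = 1$ maximizes $E[\alpha v_k]$ on the ray because $v_k \in \mathcal{N}(\Omega)$, I obtain $\mu_+(\Omega) \leqslant E[\alpha(v) v] \leqslant \liminf_k E[\alpha(v) v_k] \leqslant \liminf_k E[v_k] = \mu_+(\Omega)$. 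Hence $\alpha(v) v$ is a nonnegative element of $\mathcal{N}(\Omega)$ realizing $\mu_+(\Omega)$.

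For $\nu$, the key algebraic fact is that the disjointness of the supports of $u^+$ and $u^-$ makes the energy additive, $E[u] = E[u^+] + E[-u^-]$, so that $E[u] \geqslant \mu_+(\Omega) + \mu_-(\Omega)$ on $\mathcal{M}$ and, along a minimizing sequence $\{u_k\} \subset \mathcal{M}$, both $\{u_k^+\}$ and $\{-u_k^-\}$ lie on $\mathcal{N}(\Omega)$ and are bounded in $\wolp(\Omega)$ by the bound of the first paragraph applied to each part. I extract weak, $L^q$ and a.e.\ limits $u_k^+ \rightharpoonup v \geqslant 0$ and $-u_k^- \rightharpoonup w \leqslant 0$; a.e.\ convergence passes the constraint $u_k^+ \cdot u_k^- = 0$ to the limit, giving $v\,w = 0$ a.e., so $v = U^+$ and $w = -U^-$ for $U := v + w$. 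The lower-bound argument above shows $v \not\equiv 0$ and $w \not\equiv 0$, and Lemma~\ref{lem:superlin} provides $\alpha(v), \alpha(w) > 0$ placing each part on $\mathcal{N}(\Omega)$; since the supports remain disjoint, $\tilde U := \alpha(v) v + \alpha(w) w \in \mathcal{M}$. Running the limsup estimate of the previous paragraph on each part separately and adding, using $\liminf_k E[u_k^+] + \liminf_k E[-u_k^-] \leqslant \liminf_k E[u_k]$ with equality along a further subsequence on which both energies converge, yields $\nu \leqslant E[\tilde U] \leqslant \nu$, so $\tilde U$ minimizes $\nu$.

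The main obstacle is compactness rather than the variational bookkeeping: because $\Omega$ is bounded and $q$ is strictly subcritical the embedding into $L^q$ is compact, and this single fact simultaneously delivers convergence of the lower-order functionals, nontriviality of the weak limit through its $L^q$-norm, and the passage of the disjoint-support (hence sign) structure to the limit in the nodal case. The only genuinely delicate points are ensuring the weak limit is not merely nonnegative but genuinely lands on $\mathcal{N}(\Omega)$ after rescaling, and that in the nodal case \emph{both} sign parts survive; both are handled by the uniform lower bound $\|\nabla v_k\|_{L^p} \geqslant c > 0$ coming from the behaviour of $f$ near the origin encoded in the second part of $(A_3)$.
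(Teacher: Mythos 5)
Your proof is correct, and its skeleton --- coercivity on the Nehari manifold via $(A_4)$, weak limits plus strong $L^q$ convergence, nontriviality via the bound $|f(s)| \leqslant \mu |s|^{p-1} + C_2 |s|^{q-1}$ with $\mu < \lambda_p(\Omega)$, and a final chain of inequalities exploiting that $\alpha = 1$ maximizes $\alpha \mapsto E[\alpha u_k]$ along rays through points of $\mathcal{N}(\Omega)$ --- is the same as the paper's. Two genuine technical differences are worth recording. First, the paper proves that the minimizing sequence converges \emph{strongly} in $\wolp(\Omega)$ (by deriving the contradiction $\nu < \nu$ if strong convergence fails), so its minimizer is the weak limit itself; you avoid the strong-convergence step entirely and instead exhibit the rescaled limit $\alpha(v)v$ (resp.\ $\tilde U$) as a minimizer directly from the inequality chain. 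Your route is shorter and suffices for existence, though it yields slightly less information about the sequence. Second, for $\nu$ the paper needs the nontrivial fact that $u_k^\pm \rightharpoonup u^\pm$, for which it cites a generalization of \cite[Lemma~2.3]{cosio}; you sidestep this by taking separate weak limits $v$ and $w$ of $u_k^+$ and $-u_k^-$ and transporting the disjointness relation $u_k^+ u_k^- = 0$ to the limit via a.e.\ convergence, so that $v = U^+$ and $w = -U^-$ for $U = v + w$. This makes the sign decomposition of the limit self-contained, which is a genuine (if small) simplification. One omission: you take the minimizing sequences for granted, whereas the paper first checks that $\mathcal{N}(\Omega)$ (with each sign constraint) and $\mathcal{M}$ are nonempty --- immediate from Lemma~\ref{lem:superlin} applied to disjointly supported, fixed-sign bump functions, but it must be said, since otherwise $\mu_\pm(\Omega)$ and $\nu$ could be infima over empty sets.
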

\begin{proof}
	Note first that $\mathcal{N}(\Omega)$ and $\mathcal{M}$ are nonempty. Indeed, let us take any nontrivial $u_1, u_2 \in \wolp(\Omega) \setminus \{0\}$ such that $u_1 \geqslant 0$, $u_2 \leqslant 0 $, and $u_1, u_2$ have disjoint supports. Then, Lemma~\ref{lem:superlin} yields the existence of $\alpha_1, \alpha_2 > 0$ such that $\alpha_1 u_1, \alpha_2 u_2 \in \mathcal{N}(\Omega)$. Hence, there exist minimizing sequences for $\mu_+(\Omega)$ and $\mu_-(\Omega)$. Moreover, $\alpha_1 u_1 + \alpha_2 u_2 \in \mathcal{M}$, which implies the existence of a minimizing sequence for $\nu$. 
	
	Let us prove that a minimizing sequence for $\nu$ converges. The cases of $\mu_\pm(\Omega)$ can be treated analogously.
	Let $\{u_k\}_{k \in \mathbb{N}} \subset \mathcal{M}$ be a minimizing sequence for $\nu$. 
	(The following technical details are reminiscent of the proof of Proposition~\ref{proof:contin}.)
	First we show that $\{u_k\}_{k \in \mathbb{N}}$ is bounded. 
	Note that from $(A_2)$, $(A_4)$, and the first part of $(A_3)$ we get 
	\begin{align*}
	&\int_{\Omega} F(u_k) \, dx \leqslant C_1 +  \frac{1}{\theta} \int_{\{x \in \Omega:\, |u_k(x)| > s_0\}} u_k f(u_k) \, dx 
	\leqslant C_1 +  \frac{1}{\theta} \int_{\Omega} u_k f(u_k) \, dx
	\end{align*}	
	where $C_1 > 0$ does not depend on $k$. 
	Therefore, supposing that $\|\nabla u_k\|_{L^p(\Omega)} \to +\infty$ as $k \to +\infty$ and recalling that $u_k \in \mathcal{M}$ for each $k \in \mathbb{N}$, we obtain
	\begin{align*}
	\notag
	E[u_k] 
	&= \frac{1}{p} \int_{\Omega} |\nabla u_k|^p \, dx - \int_{\Omega} F(u_k) \, dx\\
	&\geqslant \frac{1}{p} \int_{\Omega} |\nabla u_k|^p \, dx - \frac{1}{\theta} \int_{\Omega} u_k f(u_k) \, dx - C_1 =
	\left(\frac{1}{p}-\frac{1}{\theta}\right) \int_{\Omega} |\nabla u_k|^p \, dx - C_1 \to +\infty
	\end{align*}
	since $\theta > p$. However, this fact contradicts a minimization nature of $\{u_k\}_{k \in \mathbb{N}}$. Thus, there exists $u \in \wolp(\Omega)$ such that, up to a subsequence, $u_k \to u$ and $u_k^\pm \to u^\pm$ weakly in $\wolp(\Omega)$ and strongly in $L^{q}(\Omega)$, $q \in (p, p^*)$ (see \cite[Section~3]{cosio} and a direct generalization of \cite[Lemma~2.3]{cosio} to the case $p>1$). 
	At the same time, due to $(A_2)$ and the second part of $(A_3)$, we can find $\mu \in (0, \lambda_p(\Omega))$ and $C_2>0$ such that $|f(s)| \leqslant \mu |s|^{p-1} + C_2 |s|^{q-1}$ for all $s \in \mathbb{R}$.
	Therefore, we get
	$$
	\int_{\Omega} |\nabla u_k^+|^p \, dx = \int_{\Omega}  u_k^+ f(u_k^+) \, dx \leqslant \frac{\mu}{\lambda_p(\Omega)} \int_{\Omega} |\nabla u_k^+|^p \, dx + C_3 \left(\int_{\Omega} |\nabla u_k^+|^p \, dx \right)^\frac{q}{p}
	$$
	for some $C_3 > 0$. 
	If we suppose that $\|\nabla u_k^+\|_{L^p(\Omega)} \to 0$ as $k \to +\infty$, then for sufficiently large $k$ we obtain a contradiction since $\mu < \lambda_p(\Omega)$ and $q>p$.
	Thus, there exists $C_4 > 0$ such that $\|\nabla u_k^+\|_{L^p(\Omega)} > C_4$ for all $k$ large enough.
	Hence, $\int_{\Omega} u_k^+ f(u_k^+) \, dx > C_4$, which yields $u^+ \not\equiv 0$ in $\Omega$. 
	The same facts hold true for $-u_k^-$. 
	
	Now we show that $u^\pm_k \to u^\pm$ strongly in $\wolp(\Omega)$. By the weak convergence, we have
	\begin{equation}\label{eq:weakconv}
	\|\nabla u^\pm\|_{L^p(\Omega)} \leqslant \liminf\limits_{k \to +\infty} \|\nabla u^\pm_k\|_{L^p(\Omega)}.
	\end{equation} 
	Suppose, for instance, that $u^+_k$ does not converge strongly in $\wolp(\Omega)$, i.e., the strict inequality in \eqref{eq:weakconv} holds. 
	Then Lemma~\ref{lem:superlin} implies the existence of $\alpha(u^+), \alpha(u^-) > 0$ such that $\alpha(u^+) u^+ \in \mathcal{N}(\Omega)$ and $-\alpha(u^-) u^- \in \mathcal{N}(\Omega)$, and hence $\alpha(u^+) u^+ - \alpha(u^-) u^- \in \mathcal{M}$. Moreover, $\alpha(u^\pm)$ are unique points of maximum of $E[\alpha u^\pm]$ with respect to $\alpha>0$. 
	Since each $u_k \in \mathcal{M}$, we also deduce from Lemma~\ref{lem:superlin} that $\alpha=1$ is a unique point of maximum of both $E[\alpha u_k^+]$ and $E[\alpha u_k^-]$ with respect to $\alpha>0$. Therefore,
	\begin{align*}
	\nu 
	\leqslant E[\alpha(u^+) u^+ - \alpha(u^-) u^-] 
	&< \liminf\limits_{k \to +\infty} 
	\left( E[\alpha(u^+) u_k^+] +  E[-\alpha(u^-) u_k^-] \right) \\
	&\leqslant \liminf\limits_{k \to +\infty} \left( E[u_k^+] +  E[-u_k^-] \right) 
	= \liminf\limits_{k \to +\infty} E[u_k^+ - u_k^-]
	= \nu,
	\end{align*}
	a contradiction. Consequently, $u_k^\pm \to u^\pm$ strongly in $\wolp(\Omega)$, $u \in \mathcal{M}$, and $E[u] = \nu$.
\end{proof}

\begin{remark}
	As a corollary of Lemmas~\ref{lem:existence} and \ref{lem:superlin} we have $\mu_\pm(\Omega) > 0$ and $\nu > 0$.
\end{remark}

Generalizing directly the proof of \cite[Proposition~3.1]{BartschWethWillem} (see also \cite[Proposition~6.1]{BartschWethWillem}) to the case $p>1$, we obtain the following result.
\begin{lemma}\label{lem:critical}
	Any minimizers of $\mu_\pm(\Omega)$ and $\nu$ are critical points of $E$ on $\wolp(\Omega)$, that is, weak solutions of \eqref{D} with corresponding sign properties.
\end{lemma}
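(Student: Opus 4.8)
The plan is to upgrade each constrained minimizer to a \emph{free} critical point of $E$ by exploiting the transversality of $E$ to the Nehari constraint encoded in $(A_3)$. Write $G(u) := E'[u]u = \int_\Omega |\nabla u|^p\,dx - \int_\Omega u f(u)\,dx$, so that $\mathcal{N}(\Omega) = \{u \neq 0:\, G(u) = 0\}$. The fact I would record first is the transversality identity: for every $v \in \mathcal{N}(\Omega)$,
\[
G'[v]v = (p-1)\int_\Omega v f(v)\,dx - \int_\Omega v^2 f'(v)\,dx = \int_\Omega v^2\Big((p-1)\tfrac{f(v)}{v} - f'(v)\Big)\,dx < 0,
\]
the strict negativity being exactly the first inequality of $(A_3)$ (this is the computation already used for $\Psi'_\alpha(1,0)$ in the proof of Lemma~\ref{lem:differ}). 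In particular $G'[v] \neq 0$, so $\mathcal{N}(\Omega)$ is a $C^1$ submanifold of $\wolp(\Omega)$ of codimension one near each of its points.

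For the constant-sign levels $\mu_\pm(\Omega)$ this identity immediately forces the Lagrange multiplier to vanish: if $v$ is a minimizer and the multiplier rule gives $E'[v] = \lambda\, G'[v]$, then testing against $v$ and using $E'[v]v = G(v) = 0$ yields $\lambda\, G'[v]v = 0$, whence $\lambda = 0$ and $E'[v] = 0$. The only delicate point is that $v$ minimizes $E$ over $\mathcal{N}(\Omega)$ \emph{subject to a sign constraint}, so the multiplier rule is not available off the shelf. To justify it I would combine the strict positivity $v > 0$ in $\Omega$ (Hopf, Remark~\ref{rem:maximum}) with the fibering of Lemma~\ref{lem:superlin}: projecting a perturbation $v + t\varphi$ back onto $\mathcal{N}(\Omega)$ through $\alpha(v+t\varphi)$ and differentiating at $t=0$, the derivative of $\alpha$ drops out exactly as the $\alpha'_0$-terms cancelled in Proposition~\ref{prop:differentiable}, so that one reads off $E'[v]\varphi$; the genuine obstruction is then merely to keep the competitor in the nonnegative cone, which is handled by the quantitative deformation lemma below. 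The case $\mu_-(\Omega)$ is identical with $v \leq 0$.

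For the nodal level $\nu$ I would follow the direct generalization of \cite[Proposition~3.1]{BartschWethWillem} to $p > 1$. Let $u \in \mathcal{M}$ be a minimizer, so $u^+,\, -u^- \in \mathcal{N}(\Omega)$ with $u^\pm \not\equiv 0$. Since $u^+$ and $u^-$ have disjoint supports, the two-parameter map $\phi(\alpha,\gamma) := \alpha u^+ - \gamma u^-$ satisfies $E[\phi(\alpha,\gamma)] = E[\alpha u^+] + E[-\gamma u^-]$, and by Lemma~\ref{lem:superlin} each summand is strictly maximized at $\alpha = 1$, resp.\ $\gamma = 1$; hence $(1,1)$ is the unique maximizer of $E\circ\phi$ on $(0,\infty)^2$, with value $\nu$. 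Assuming $E'[u] \neq 0$, the quantitative deformation lemma produces a deformation $\eta$ that strictly lowers $E$ below $\nu$ near $u$ while acting as the identity where $E$ is bounded away from $\nu$. Setting $\tilde\phi := \eta(1,\cdot)\circ\phi$, I would then run a Brouwer-degree argument on the planar vector field $(\alpha,\gamma) \mapsto \big(G(\tilde\phi(\alpha,\gamma)^+),\, G(\tilde\phi(\alpha,\gamma)^-)\big)$ over a small square $[1-r,1+r]^2$: on the boundary of the square $\eta$ is the identity and the strict maximality of $E\circ\phi$ at $(1,1)$ fixes the sign pattern of the two components, so the degree is nonzero and there is an interior zero $(\alpha_0,\gamma_0)$. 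This zero is a point $\tilde\phi(\alpha_0,\gamma_0) \in \mathcal{M}$ with $E[\tilde\phi(\alpha_0,\gamma_0)] < \nu$, contradicting the definition of $\nu$.

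The main obstacle I anticipate is exactly this topological step in the nodal case: the deformation $\eta$ is built from a pseudo-gradient of $E$ on all of $\wolp(\Omega)$ and a priori need not respect the splitting into positive and negative parts, so one must verify that the sign structure of $\big(G(\tilde\phi^+),\, G(\tilde\phi^-)\big)$ survives on $\partial([1-r,1+r]^2)$ and hence that the degree does not vanish. This is forced by choosing the deformation scale $\epsilon$ small relative to the energy gap $\nu - \max_{\partial([1-r,1+r]^2)} E\circ\phi > 0$ supplied by the strict maximality at $(1,1)$, which guarantees that $\eta$ leaves the boundary of the square untouched. All the remaining ingredients — that minimizers are of constant sign, resp.\ genuinely sign-changing, and belong to $C^{1,\beta}(\overline{\Omega})$ — are already provided by Lemma~\ref{lem:existence} and Remark~\ref{rem:regularity}, so they add no further difficulty.
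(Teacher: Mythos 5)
Your treatment of the nodal level $\nu$ coincides with the paper's proof: the paper disposes of Lemma~\ref{lem:critical} by directly generalizing \cite[Proposition~3.1]{BartschWethWillem} to $p>1$, and that cited proof is precisely the quantitative-deformation-plus-degree (Miranda) argument on the square that you reconstruct, including the choice of the deformation scale $\epsilon$ below the boundary energy gap. (The one detail you leave implicit is that the interior zero satisfies $\tilde\phi(\alpha_0,\gamma_0)^\pm\not\equiv 0$; this follows, e.g., because the deformation moves points only a small distance while $\|\phi(\alpha,\gamma)^\pm\|_{L^q(\Omega)}$ is bounded away from zero on the square.)

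The genuine gap is in your argument for $\mu_\pm(\Omega)$, and it is a circularity. You invoke the strict positivity $v>0$ in $\Omega$ via Remark~\ref{rem:maximum} to keep the perturbations $v+t\varphi$ inside the nonnegative cone; but that remark is the strong maximum principle of \cite{vazquez} for weak \emph{solutions} of \eqref{D}, and criticality of $v$ is exactly what is being proved. Before criticality is established, a minimizer of \eqref{mu:Nehari} is only known to satisfy $v\geqslant 0$ and $v\in\mathcal{N}(\Omega)$; it could a priori vanish on an interior open set, in which case $v+t\varphi$ leaves the cone for every $t\neq 0$ for suitable $\varphi$, and the fibering/multiplier computation cannot even be started (for the same reason, differentiating $G$ at such a $v$ is problematic, since under $(A_1)$ the paper justifies this only at functions bounded away from zero, cf.\ Lemma~\ref{lem:differ}). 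The paper's own logical order is existence $\to$ criticality $\to$ regularity $\to$ positivity, and your argument uses the last item to prove the second. Your fallback, that the sign constraint is ``handled by the quantitative deformation lemma,'' does not close this gap: Willem's deformation is built from an arbitrary pseudo-gradient and has no reason to preserve the cone $\{u\geqslant 0\}$, so the one-parameter intermediate-value argument along the deformed ray only produces some $w\in\mathcal{N}(\Omega)$ with $E[w]<\mu_+(\Omega)$ which may change sign or be negative. Since $f$ is not assumed odd, $\inf_{\mathcal{N}(\Omega)}E=\min\{\mu_+(\Omega),\mu_-(\Omega)\}$ can lie strictly below $\mu_+(\Omega)$, so such a $w$ contradicts nothing. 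The standard repair, and what ``generalizing \cite[Proposition~3.1]{BartschWethWillem} directly'' has to mean for $\mu_\pm$, is to remove the constraint rather than respect it: replace $f$ by $\hat f(s):=f(\max\{s,0\})$ (resp.\ $f(\min\{s,0\})$ for $\mu_-$), run the unconstrained one-parameter deformation argument for the modified functional on its Nehari manifold, note that any resulting critical point $u$ satisfies $\int_\Omega|\nabla u^-|^p\,dx=0$ upon testing the modified equation with $u^-$, hence is nonnegative and solves \eqref{D}, and conclude a posteriori that the modified least level equals $\mu_+(\Omega)$ and that every minimizer of \eqref{mu:Nehari} minimizes the modified, unconstrained problem. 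Only at that final stage, applied to a function already known to be a solution, is the maximum principle legitimate.
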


\begin{remark}\label{rem:regularity}
	Any weak solution of \eqref{D} belongs to $C^{1,\beta}(\overline{\Omega})$ for some $\beta \in (0,1)$, see \cite[Corollary~1.1]{guedda} and \cite{lieberman}. 
\end{remark}

\begin{remark}\label{rem:maximum}
	From the first part of $(A_3)$ it follows that $f(0) = 0$, $f(s) > 0$ for $s > 0$, and $f(s) < 0$ for $s<0$. Hence, applying the strong maximum principle \cite[Theorem~5]{vazquez}, we derive that any weak constant-sign solution of \eqref{D} is either strictly positive or strictly negative in $\Omega$, and has a nonzero normal derivative on the boundary $\partial \Omega$.
\end{remark}

\begin{remark}\label{rem:reg2}
	Let $u \in C^{1,\beta}(\overline{\Omega})$ be a positive weak solution of \eqref{D}. 
	If $|\nabla u| > \eta$ in $\Omega_\delta := \{x \in \Omega:\, \mathrm{dist}(x, \partial \Omega) < \delta\}$ for some $\eta, \delta>0$, then $u \in C^{2}(\overline{\Omega_\delta})$. 
	See, e.g., \cite[Lemma~5.2]{fleckingertakac} with the source function $\widetilde{f}(x) := f(u(x)) - a u(x)^{p-1}$, $a < 0$.
\end{remark}

\begin{remark}\label{rem:Omega_t}
	All the results stated above in Appendix~\ref{section:appendix} remain valid for problem \eqref{D} in perturbed domains $\Omega_t = \Phi_t(\Omega)$, where the deformation $\Phi_t$ is given by \eqref{Phi}, and $|t| < \delta$ with sufficiently small $\delta > 0$. 
	Indeed, the only assumption on the nonlinearity $f$ which depends on a domain is the second part of $(A_3)$.
	However, since $\lambda_p(\Omega_t)$ is continuous at $t=0$ (see \cite{garcia}), we can take $\delta > 0$ smaller (if necessary) and find $\widetilde{C}>0$ such that $\limsup\limits_{s \to 0}\frac{f(s)}{|s|^{p-2}s} < \widetilde{C} < \lambda_p(\Omega_t)$ for all $|t| < \delta$. That is, the second part of $(A_3)$ is satisfied uniformly for all $|t| < \delta$.
\end{remark}

\addcontentsline{toc}{section}{\refname}
\small

\end{document}